\definecolor{blue}{rgb}{0,0,1}
\definecolor{red}{rgb}{1,0,0}
\definecolor{green}{rgb}{0,.7,0}
\newtheorem{theorem}{Theorem}
\newtheorem{``theorem''}[theorem]{``Theorem''}
\newtheorem{lemma}[theorem]{Lemma}
\newtheorem{proposition}[theorem]{Proposition}
\newtheorem{corollary}[theorem]{Corollary}
\theoremstyle{definition}
\newtheorem{definition}[theorem]{Definition}
\theoremstyle{remark}
\newtheorem{remark}[]{Remark}
\newtheoremstyle{thm}
  {12pt}
  {12pt}
  {\itshape}
  {\parindent}
  {\scshape}
  {.}
  {5pt}
  {}
\theoremstyle{thm}
\newtheorem*{T6.17}{Theorem \ref{gradientthm1}}
\newtheoremstyle{prop}
  {12pt}
  {12pt}
  {\itshape}
  {\parindent}
  {\scshape}
  {.}
  {5pt}
  {}
\theoremstyle{prop}
\newtheoremstyle{lem}
  {12pt}
  {12pt}
  {\itshape}
  {\parindent}
  {\scshape}
  {.}
  {5pt}
  {}
\theoremstyle{lem}
\newtheorem*{L2.6}{Lemma \ref{lem2.4}}
\newtheoremstyle{defn}
  {12pt}
  {12pt}
  {\itshape}
  {\parindent}
  {\scshape}
  {.}
  {5pt}
  {}
\theoremstyle{defn}
\newtheoremstyle{examp}
  {12pt}
  {12pt}
  {}
   {\parindent}
  {\scshape}
  {.}
  {5pt}
  {}
\theoremstyle{examp}
\newtheoremstyle{cor}
  {12pt}
  {12pt}
  {\itshape}
  {\parindent}
  {\scshape}
  {.}
  {5pt}
  {}
\theoremstyle{cor}
\newtheoremstyle{recipe}
  {12pt}
  {12pt}
  {\itshape}
   {\parindent}
  {\scshape}
  {.}
  {5pt}
  {}
\theoremstyle{recipe}
\newtheoremstyle{rem}
  {12pt}
  {12pt}
  {}
   {\parindent}
  {\scshape}
  {.}
  {5pt}
  {}
\theoremstyle{rem}
\newtheoremstyle{quest}
  {12pt}
  {12pt}
  {\itshape}
  {\parindent}
  {\scshape}
  {.}
  {5pt}
  {}
\theoremstyle{quest}
\newcommand{\bp}{\begin{proof}}
\newcommand{\ep}{\end{proof}}
\renewcommand{\epsilon}{\varepsilon}
\newcommand{\Z}{{\mathbb Z}}
\newcommand{\id}{\operatorname{id}}
\newcommand{\loc}{\text{loc}}
\newcommand{\R}{{\mathbb R}}
\providecommand{\ker}[1]{$\text{ker}\ {#1}$}
\newcommand{\N}{{\mathbb N}}
\newcommand{\T}{\mathcal{T}}
\newcommand{\Rot}{\operatorname{Rot}}
\newcommand{\rot}{\operatorname{rot}}
\newcommand{\F}{\mathcal{F}}
\newcommand{\Diff}{\operatorname{Diff}}
\newcommand{\varphitilde}{\tilde{\varphi}}
\renewcommand{\u}{\tilde{u}}
\renewcommand{\v}{\tilde{v}}
\newcommand{\C}{\mathbb{C}}
\renewcommand{\loc}{\textup{loc}}
\newcommand{\Ftilde}{\tilde{\mathcal{F}}}
\newcommand{\ftilde}{\tilde{f}}
\renewcommand{\max}{\textup{max}}
\renewcommand{\S}{\mathcal{S}}
\newcommand{\half}{\frac{1}{2}}
\newcommand{\Jhat}{\hat{J}}
\newcommand{\ubar}{\bar{u}}
\newcommand{\M}{\mathcal{M}}
\renewcommand{\H}{\mathcal{H}}
\renewcommand{\H}{\mathbb{H}}
\renewcommand{\S}{\mathcal{S}}
\renewcommand{\u}{\tilde{u}}
\renewcommand{\v}{\tilde{v}}
\newcommand{\Ubar}{\bar{U}}
\newcommand{\gammadot}{\dot{\gamma}}
\newcommand{\Ham}{\mathcal{H}}
\newcommand{\varphihat}{\hat{\varphi}}
\newcommand{\Vbar}{\bar{V}}
\newcommand{\A}{\mathbb{A}}
\newcommand{\Atilde}{\tilde{\A}}
\newcommand{\Homeo}{\textup{Homeo}}
\newcommand{\Act}{\mathbf{A}}
\newcommand{\Fbar}{\overline{F}}
\newcommand{\Diophantine}{\mathcal{D}}
\newcommand{\vertical}{\textup{vert}}
\newcommand{\Zhat}{\hat{Z}}
\newcommand{\ghat}{\hat{g}}
\renewcommand{\Ddot}{\mathring{D}}
\gdef\hex{"}}
\mathchardef\laplace=\hex0001
\mathchardef\nabla=\hex0272
\def\@@dalembert#1#2{\setbox0\hbox{$#1\mathrm I$}

  \vrule height\ht0 depth\z@ width.04\ht0

  \rlap{\vrule height\ht0 depth-.96\ht0 width.8\ht0}

  \vrule height.1\ht0 depth\z@ width.8\ht0

  \vrule height\ht0 depth\z@ width.1\ht0 }
\def\dalembert{\mathbin{\mathpalette\@@dalembert{}}\,}
\date{}
\begin{document}
\address{
    Barney Bramham\\
    Institute for Advanced Study, Princeton, NJ 08540}
\email{bramham@ias.edu}

\title[Periodic approximations of irrational pseudo-rotations]
{Periodic approximations of irrational pseudo-rotations using pseudoholomorphic curves}

\author[Barney Bramham]{Barney Bramham}
\date{\today}
\maketitle

\begin{abstract}
 We prove that every $C^\infty$-smooth, area preserving diffeomorphism of
 the closed $2$-disk having not more than one periodic point is the uniform limit
 of periodic $C^\infty$-smooth diffeomorphisms.
 In particular every smooth irrational pseudo-rotation can be $C^0$-approximated by integrable
 systems.  This partially answers a long standing question of A. Katok regarding zero entropy
 Hamiltonian systems in low dimensions.  Our approach uses pseudoholomorphic curve techniques from
 symplectic geometry.
\end{abstract}

\tableofcontents
\section{Introduction}

\subsection{The main result}
In this paper we prove a statement which is a significant first step towards answering the
following general question of Katok.

\begin{quote}
 \textit{In low dimensions is every conservative dynamical system with zero topological entropy
 a limit of integrable systems?}
\end{quote}

This is stated as problem 1 in \cite{K_elliptic_questions}, but relates also to the much older 
\cite{AnosovKatok}.
Low dimensions means maps on surfaces or flows on $3$-dimensional manifolds.  Arguably
one of the main obstacles to answering this question
affirmatively is the existence of ergodic components of positive measure.
Indeed, ergodic maps (with respect to Lebesgue measure) exhibit strongly different
dynamical behavior to integrable ones.  In particular, almost
every point is the initial condition for a dense orbit.

Using finite energy foliations by pseudoholomorphic curves we obtain a result of this nature
for the class of area preserving diffeomorphisms of the $2$-disk known as
irrational pseudo-rotations.  Irrational pseudo-rotations are of particular interest with regard to this 
question because they include all known ergodic disk maps with zero topological entropy. 

For each $t\in\R$ let $R_{2\pi t}:D\rightarrow D$ denote the rigid
rotation through angle $2\pi t$ on the disk $D=\{(x,y)\,|\, x^2+y^2\leq1\}$.  Let $\Diff^\infty_+(D)$ and 
$\Diff^\infty(D,\omega_0)$ be the spaces of $C^\infty$-smooth diffeomorphisms of the disk which 
preserve orientation and the area form $\omega_0=dx\wedge dy$ respectively.  

\begin{theorem}[Main result]\label{T:main_result_intro}
Suppose $\varphi\in\Diff^\infty(D,\omega_0)$ fixes the origin and has no other periodic points.
Then it is the $C^0$-limit of a sequence of maps of the form $\varphi_k=g_k^{-1}R_{2\pi p_k/q_k}g_k$,
for a sequence of conjugating maps $g_k\in\Diff^\infty_+(D)$ which fix the origin, and a sequence of
rationals $p_k/q_k$ converging to an irrational number.
\end{theorem}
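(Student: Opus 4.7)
The plan is to obtain the approximating sequence from finite energy pseudoholomorphic foliations adapted to $\varphi$. The starting point is to realize $\varphi$ as the return map of an associated Reeb-like flow on a closed $3$-manifold: through an open book or suspension construction, one attaches a binding orbit over the fixed point at the origin and produces a flow whose short-action periodic orbits, other than the binding, are in bijection with the periodic orbits of $\varphi$. The hypothesis that $\varphi$ has no periodic points except the origin therefore translates into the absence of other short-action periodic orbits, which is the crucial dynamical input needed to run SFT-type compactness arguments for pseudoholomorphic curves.

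The central tool will be a finite energy foliation, in the spirit of Hofer--Wysocki--Zehnder, by pseudoholomorphic planes asymptotic to the binding orbit over $0$. Each leaf projects to the mapping torus as a disk transverse to the flow, and further down to $D$ as a family of embedded arcs from the origin to the boundary which partition $D\setminus\{0\}$. The return map $\varphi$ permutes the leaves and induces a circle homeomorphism whose rotation number $\alpha$ must be irrational, for otherwise the foliation would force $\varphi$ to have a periodic point other than the origin.

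To produce the approximations I would use, for each rational $p_k/q_k$ close to $\alpha$, a foliation $\mathcal{F}_k$ built to be compatible with rotation number $p_k/q_k$. The foliation $\mathcal{F}_k$ gives a smooth polar coordinate system on $D$ in which leaves are permuted cyclically; let $g_k \in \Diff^\infty_+(D)$ be the diffeomorphism straightening $\mathcal{F}_k$ to the standard radial foliation, and set $\varphi_k := g_k^{-1} R_{2\pi p_k/q_k} g_k$. By construction these are smooth conjugates of rigid rational rotations and fix the origin. The $C^0$-convergence $\varphi_k \to \varphi$ should follow from an upper-semicontinuity property of the foliations with respect to the rotation number parameter, together with the fact that $p_k/q_k \to \alpha$.

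The main obstacle is the existence and requisite regularity of the foliations $\mathcal{F}_k$. One must choose almost complex structures, apply SFT compactness together with automatic transversality for index-two pseudoholomorphic curves in dimension four, and verify that the resulting curves genuinely foliate the complement of the binding, project to embedded smooth arcs on $D$, and extend smoothly across the binding and the boundary $\partial D$. A second obstacle, essentially analytic, is quantifying the stability of these foliations as $p_k/q_k \to \alpha$; without such a quantitative statement one cannot guarantee that the straightening maps $g_k$ yield a sequence whose conjugates of $R_{2\pi p_k/q_k}$ converge uniformly back to $\varphi$.
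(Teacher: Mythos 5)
Your high-level framework — finite energy foliations à la Hofer–Wysocki–Zehnder, disk maps extracted from foliations, approximation by conjugates of rotations — matches the paper in spirit, but the central mechanism you propose does not work, and the genuine ideas that make the paper go are missing.

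The critical gap is in the step where you want to build, for each rational $p_k/q_k$ close to $\alpha$, a foliation $\mathcal{F}_k$ ``compatible with rotation number $p_k/q_k$.'' The pseudoholomorphic foliation is determined by the almost complex structure, which is determined by the dynamics of $\varphi$; you cannot impose a rotation number on it. Since $\varphi$ has $\alpha$ irrational, any single open-book-type foliation for $\varphi$ will reflect that irrationality and will not give you a periodic disk map. What the paper actually does is fix the dynamics once and for all (a Hamiltonian $H$ generating $\varphi$), but vary the topology of the ambient cylindrical manifold: it constructs the foliations $\F_n^\pm$ inside $\R\times Z_n$ where $Z_n=\R/n\Z\times D$ is the \emph{length-$n$ mapping torus}. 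This produces, for each $n$, a disk map $\varphi_n$ which is automatically $n$-periodic because the $\Z_n$ deck symmetry of $Z_n$ preserves $J_n$ and, by the uniqueness clause in Theorem \ref{T:existence_and_uniqueness_of_folns_for_pseudorotations}, also preserves the foliation. That is how the rational (i.e.\ periodic) approximants arise — not from imposing a rotation number, but from a symmetry argument on a cover.

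You also skip the quantitative input that drives the convergence. The paper proves (Lemma \ref{L:formula_1}) that every half-cylinder leaf in $\F_n^+$ has $\omega$-energy exactly $\{n\alpha\}\pi$; restricting to a subsequence $n_j$ with $\{n_j\alpha\}\to 0$ makes the $\omega$-energies go to zero \emph{uniformly over leaves}, and then SFT-type compactness (Theorem \ref{T:compact_sequences}) forces the leaves to degenerate onto the vertical foliation by orbit cylinders, which is precisely the foliation whose induced disk map is $\varphi$ itself. Your ``upper-semicontinuity of the foliations with respect to the rotation number parameter'' is not an available substitute: there is no continuous family parameterized by rotation number to be semicontinuous in, and without the energy formula you have no handle for compactness as $n\to\infty$ where the total energy actually blows up like $n$. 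Finally, your plan to obtain $g_k$ by directly straightening the foliation into polar coordinates would push the irregularity at the binding into $g_k$; the paper instead only knows that the disk maps $\varphi_n$ are homeomorphisms that are smooth away from the origin, then invokes the classical Brouwer–Eilenberg–Ker\'ekj\'art\'o theorem to write a periodic homeomorphism as a topological conjugate of a rigid rotation, and then smooths the conjugating map in $C^0$ — this smoothing step is exactly why the approximants need not be area-preserving.
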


Elements $\varphi\in\Diff^{\infty}(D,\omega_{0})$ having precisely one periodic point are
also known as (smooth) \emph{irrational pseudo-rotations}.  See definition \ref{D:irr_pseudorotation}.

Concerning the convergence of the approximation maps in theorem \ref{T:main_result_intro}
it is natural to ask whether $C^0$-convergence is so weak as to allow ``almost anything'' to be
obtained in the limit?  A more natural topology to consider Katok's question, as discussed in
\cite{K_elliptic_questions}, is at least a $C^{1,\varepsilon}$-topology, $\varepsilon>0$,
in which the topological entropy is lower semi-continuous.  The author would therefore like to thank
Patrice Le Calvez for pointing out the following and
the idea of its proof using work of Franks.  A slightly more general statement is proven in appendix A.

\begin{proposition}\label{P:LeCalvez_observation_intro}
Suppose $\varphi\in\Diff^\infty(D,\omega_0)$ is the $C^0$-limit of a sequence of maps of the form
$\varphi_k=g_k^{-1}R_{2\pi p_k/q_k}g_k$, for a sequence of conjugating maps $g_k\in\Diff^{\infty}_+(D)$ 
fixing the origin, and a sequence of rationals $p_k/q_k$ converging to an irrational number.
Then $\varphi$ necessarily has precisely one periodic point.  In particular it is an irrational
pseudo-rotation.
\end{proposition}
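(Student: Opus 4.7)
The plan is to argue by contradiction: assume $\varphi$ has a periodic point $x\neq 0$ of minimal period $n$, and derive a contradiction by comparing the rigid rotation structure of the approximants $\varphi_k$ with a theorem of Franks.

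First we record the basic structure. Since $\varphi_k(0)=0$ for all $k$ and fixed points persist under uniform convergence, $\varphi(0)=0$. The boundary restriction $\varphi_k|_{\partial D}$ is conjugate via the orientation-preserving circle homeomorphism $g_k|_{\partial D}$ to $R_{2\pi p_k/q_k}|_{\partial D}$, hence has rotation number $p_k/q_k\bmod 1$; by $C^0$-continuity of the rotation number for orientation-preserving circle homeomorphisms, $\varphi|_{\partial D}$ has irrational rotation number $\alpha\bmod 1$. In particular $x$ must lie in the open annulus $A=\Int D\setminus\{0\}$.

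Next, pass to the universal cover $\tilde A$ of $A$ with deck transformation $\tau\colon(s,r)\mapsto(s+1,r)$. The isotopy $t\mapsto g_k^{-1}R_{2\pi tp_k/q_k}g_k$ from $\id$ to $\varphi_k$ picks out a canonical lift $\tilde\varphi_k$, and a direct computation using that the lift $\tilde g_k$ commutes with $\tau$ yields $\tilde\varphi_k^{q_k}=\tau^{p_k}$; thus every $\tilde\varphi_k$-orbit has rotation number exactly $p_k/q_k$. After extracting a subsequence one obtains a lift $\tilde\varphi$ of $\varphi|_A$ with $\tilde\varphi_k\to\tilde\varphi$ on compacta, and $x$ lifts to some $\tilde x$ with $\tilde\varphi^n(\tilde x)=\tau^p(\tilde x)$ for a unique integer $p$, giving the orbit of $x$ the rational rotation number $p/n$.

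The desired contradiction is the equality $p/n=\alpha$, which is impossible. This is where Franks' work enters: a Poincar\'e--Birkhoff-type theorem of Franks for area-preserving annulus homeomorphisms isotopic to the identity asserts that whenever the rotation set of such a map contains two distinct numbers, every rational strictly between them is realized as the rotation number of a periodic orbit. Combined with the rigid structure that each $\tilde\varphi_k$ has the single-point rotation set $\{p_k/q_k\}\to\{\alpha\}$, this forces $\rho(\tilde\varphi)\subseteq\{\alpha\}$, so $p/n=\alpha$, contradicting the irrationality of $\alpha$.

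The main obstacle is the containment $\rho(\tilde\varphi)\subseteq\{\alpha\}$: rotation sets of annulus homeomorphisms are not in general lower semicontinuous under $C^0$-limits, so this constraint must fully exploit the integrable structure of the approximants --- every orbit periodic with a common rotation number --- together with Franks' theorem, which rules out any phantom rotation number in the limit by producing periodic orbits incompatible with the rigidity of the $\varphi_k$.
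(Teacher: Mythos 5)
Your high-level strategy is the same as the paper's (contradiction via a Poincar\'e--Birkhoff-type theorem of Franks plus the rigidity of the approximants $\varphi_k$), but the step you single out as ``the main obstacle'' is in fact a genuine gap that your sketch does not close.

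The claim $\rho(\tilde\varphi)\subseteq\{\alpha\}$ is an \emph{asymptotic} statement, and you correctly observe that rotation sets of annulus homeomorphisms do not pass to $C^0$-limits in the way you would need. But the fix you gesture at does not work as stated: you invoke the version of Franks' theorem that produces, from two rotation numbers in $\rho(\tilde\varphi)$, periodic orbits of $\varphi$ with every intermediate rational rotation number. Those are periodic orbits of $\varphi$, not of $\varphi_k$, and nothing in your argument transfers them to the approximants. $C^0$-closeness of $\varphi_k$ to $\varphi$ gives no control on periodic orbits, since periodicity is not an open condition in $C^0$; so having ``many'' periodic orbits of $\varphi$ gives no contradiction with the fact that $\varphi_k$ has all orbits of rotation number exactly $p_k/q_k$. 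The conclusion you want is a contradiction about $\varphi_k$, and your chain of reasoning never reaches $\varphi_k$ at all once it's set up.

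The paper closes exactly this gap by working on a \emph{finite} time scale rather than with asymptotic rotation sets. From the hypothetical periodic orbit of $\varphi$ in $\Ddot\setminus\{0\}$ and a non-wandering boundary point, one constructs a positively returning disk $U_+$ and a negatively returning disk $U_-$ for some lift of $\varphi^n$, chosen so that even the \emph{closures} satisfy $\tilde\varphi^n(\Ubar_\pm)\cap\Ubar_\pm=\emptyset$. These are finitely many open conditions on a finite iterate, so they are robust under uniform perturbation and hence also hold for the lifts $\tilde\varphi_k^n$ once $k$ is large. Then Franks' Theorem 2.1 (the returning-disk version, Theorem \ref{T:Franks_gen_of_PB} in the paper, applied to the open annulus $\Ddot\setminus\{0\}$ with the non-wandering hypothesis satisfied because $\varphi_k$ is conjugate to a rotation) gives a fixed point of $\varphi_k^n$ away from the origin, forcing $n\theta_k\in\Z$; this contradicts $\theta_k\to\theta$ irrational. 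Your preliminary observations (boundary rotation number irrational by continuity, the lift identity $\tilde\varphi_k^{q_k}=\tau^{p_k}$) are correct and compatible with this route, but the mechanism that actually carries information from $\varphi$ back to $\varphi_k$ has to be the persistence of returning disks, not a containment of rotation sets.
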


Thus $C^0$-convergence is still strong
enough to guarantee that the limit object is an irrational pseudo-rotation.  In particular
that it has zero entropy.  (Due, for example, to
Katok's theorem for $C^{1,\varepsilon}$ surface diffeomorphisms
\cite{Katok_entropy_and_horseshoes} that bounds entropy from above by the exponential growth rate
of periodic points.)

The existence of ergodic disk maps with zero entropy was established back in 1970 by Anosov and Katok
\cite{AnosovKatok}.  Previous to their constructions it was even an open question in the
non-conservative setting; Shnirelman 1930  \cite{Shnirelman} found a (non-area preserving) diffeomorphism
of the disk with a dense orbit, further discussion of which can be found in \cite{FayadKatok}.

It is interesting to compare the Anosov-Katok construction to the statement
of theorem \ref{T:main_result_intro}.  They construct an ergodic map $\varphi:D\rightarrow D$ as
the $C^\infty$-limit
of a sequence of maps $\varphi_k:D\rightarrow D$ which are inductively constructed with the following form.
For each $k\in\N$ there exists $(p_k,q_k)\in\Z\times\N$ relatively prime, and
$g_k\in\Diff^{\infty}(D,\omega_0)$, also fixing the origin, so that
\begin{equation}\label{E:AK_approximation_maps}
		      \varphi_k=g_k^{-1}\circ R_{2\pi p_k/q_k}\circ g_k.
\end{equation}
The maps $g_k$ are arranged so that the orbits of $\varphi_k$ increasingly spread
out over the disk as $k\rightarrow\infty$.  Consequently, the sequence $\{g_k\}$ blows up in every
$C^r$ topology.  But by interatively choosing $q_{k+1}-q_k$ sufficiently
large depending on the size of $\|g_k\|_{C^k}$, the $C^k$ norm
of $\varphi_k$ can be controlled.  A limiting subseqence
converges to a map $\varphi$ with the desired ``pathalogical'' behavior
such as a dense orbit, or ergodicity, or even weak mixing.  More details of this method, 
other results and questions, are in Fayad-Katok \cite{FayadKatok}.  See also 
Fayad-Saprykina \cite{Fayad_Sap}. 

In some sense then, theorem \ref{T:main_result_intro} reverses the limiting process just described
above.  However our conclusions are in two respects weaker than a word for word
converse to the Anosov-Katok construction.  Firstly, the convergence $\varphi_{k}\rightarrow\varphi$ in
(\ref{E:AK_approximation_maps}) is in the $C^{\infty}$-sense.  Secondly, each  $\varphi_{k}$ in
(\ref{E:AK_approximation_maps}) preserves the standard area form.  We do not show this for
the approximation maps in theorem \ref{T:main_result_intro}.  This raises natural questions for
further investigation.

A remark on integrability:  The notion of integrability for a map on a surface that
appears to be referred to in \cite{K_elliptic_questions}
is that the map should admit a ``first integral'', that is, a continuous or smooth real valued function
on the surface that is not constant on any open set but is constant on the orbits of the given map.
It is obviously in this sense that each of our approximation maps in theorem \ref{T:main_result_intro} is
integrable.   A natural question
is whether approximation maps can be found that are integrable in the Liouville-Arnold sense.  This
would follow if they were area preserving.

\subsection{Idea of the proof}
Pick a closed loop of Hamiltonians $H_{t}:D\rightarrow D$, over $t\in\R/\Z$, which generate 
a symplectic isotopy whose time-one
map is $\varphi$.  Denote the $1$-periodic path of Hamiltonian vector fields on
the disk by $X_{H_{t}}$.  For each $n\in\N$ equip $Z_{n}=\R/n\Z\times D$ with coordinates
$(\tau,z)$.  Then the vector field $R_{n}(\tau,z)=\partial_{\tau}+X_{H_{t}}(z)$ defines a
flow on $Z_{n}$ with time-one map $\varphi$ and first return map $\varphi^{n}$.

Consider the $4$-manifold $W_{n}:=\R\times Z_{n}$ with the unique almost complex structure satisfying
\[
 \left\{\begin{aligned}
		&J_n\partial_{\R}=R_n \\
		&J_n|_{TD}=i
\end{aligned}\right.
\]
where $\partial_{\R}$ is the vector field dual to the $\R$-coordinate on $W_{n}$.  Then $(W_{n},J_{n})$
is a so called cylindrical, symmetric, almost complex manifold.  That is, it is compatible in a precise
way with the necessary symplectic structures for the compactness framework from symplectic field theory
\cite{BEHWZ_SFT_compactness} to apply to $J_{n}$-holomorphic curves.  

In section \ref{S:pf_of_fef} 
we adapt techniques developed by Hofer, Wysocki, and Zehnder 
\cite{HWZ_tight,HWZ_convex,HWZ_tightII,HWZ_fef,HWZ_survey} to construct 
finite energy foliations of $(W_{n},J_{n})$.  That is, a foliation $\F$ by embedded surfaces 
that are the images of finite 
energy $J_{n}$-holomorphic curves, and where the set $\F$ is invariant under translations
in the $\R$-direction on $W_{n}$.

\begin{figure}[hbt]
\begin{center}
\includegraphics[scale=.45]{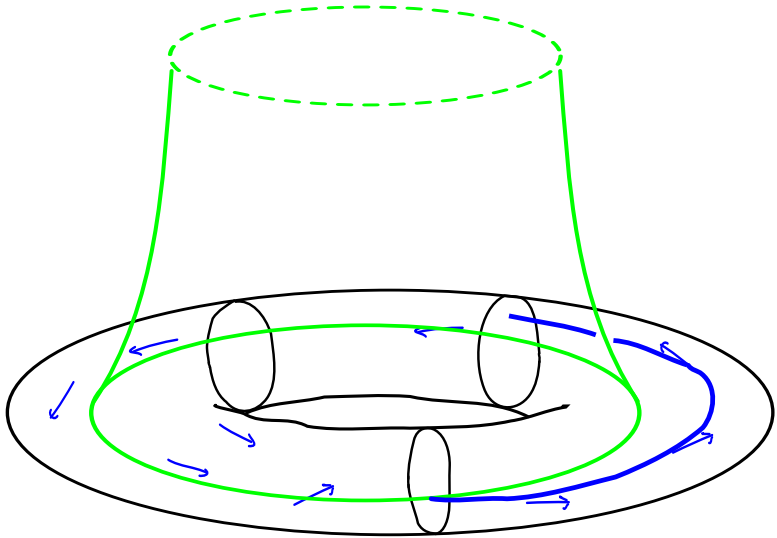}
\end{center}
\caption{A leaf $F$ in a finite energy foliation $\F$ of $W_{n}=\R\times Z_{n}$
intersects the hypersurface $\{0\}\times Z_{n}$ in a closed $1$-dimensional curve.
We use this to define a disk map $\varphi_{\F}$. }
\label{F:disk_map}
\end{figure}

In this setting a finite energy foliation $\F$ of $(W_{n},J_{n})$ can be used to define a disk map
$\varphi_{\F}:D\rightarrow D$ in the following fairly natural way: identify the mapping torus
$Z_{n}$ with the hypersurface $\{0\}\times Z_{n}\subset W_{n}$.  Then each leaf $F\in\F$
is either disjoint from $Z_{n}$ or intersects it transversally in an embedded closed curve as
in figure \ref{F:disk_map}.  The closed curve will then intersect each disk slice $\{\tau\}\times D$
in $Z_{n}$ transversally and precisely once (each such disk is also $J_{n}$-holomorphic).
If the curve intersects the disk $\{0\}\times D$ at a point $(0,\xi)$ and intersects the disk
$\{1\}\times D$ at a point $(1,\xi')$, then we set $\varphi_{\F}(\xi)=\xi'$.

For each $n\in\N$ there is also a ``trivial'' $\R$-invariant foliation of $(W_{n},J_{n})$ by
$J_{n}$-holomorphic curves we could refer to as the \emph{vertical} foliation $\F^{\vertical}(W_{n},J_{n})$.
The leaves of the vertical foliation are all of the form $\R\times\gamma(\R)$ for each
trajectory $\gamma:\R\rightarrow Z_{n}$ of the vector field $R_{n}$.  Usually this foliation is of
little interest due to its ``instability'' and each leaf besides one has infinite so called
$\lambda$-energy.  However if we use the vertical foliation to define a disk map as above, we 
obtain the map $\varphi$ of interest.
The leaves of the vertical foliation are characterized by having vanishing so called 
$\omega$-energy.  

It turns out that there exists a sequence of finite energy foliations 
$\F_{n}$ of $(W_{n},J_{n})$ with the following property: each leaf in $\F_{n}$ has 
$\omega$-energy zero or  
\[
						\{n\alpha\}\in[0,1)					
\]
where $\alpha\in\R$ can be identified with the rotation number of the circle map 
$\varphi|_{\partial D}$.  In particular $\alpha$ is irrational.  $\{x\}$ denotes the fractional part 
of $x$.  

Restricting to a subsequence $n_{j}$ for which $\{n_{j}\alpha\}\rightarrow0$ as 
$j\rightarrow\infty$, the leaves in $\F_{n_{j}}$ converge in some sense to the vertical foliation 
$\F^{\vertical}$.  Correspondingly the disk maps $\varphi_{\F_{n_{j}}}$ converge pointwise 
to the disk map $\varphi_{\F^{\vertical}}=\varphi$.  

By being slightly more careful one obtains uniform convergence.
Finally, each foliation $\F_{n}$ has a certain symmetry that ensures that the induced map
$\varphi_{\F_{n}}$ has $n$-th iterate $(\varphi_{\F_{n}})^{n}=\id_{D}$.  This is equivalent 
to $\varphi_{\F_{n}}$ being conjugate to a rigid rotation
through an angle $2\pi p/n$ for some $p\in\{0,1,\ldots,n-1\}$.  

\begin{remark}
We only use that $\varphi$ is an irrational pseudo-rotation at two points in this argument.  
First, to find the nice expression for the $\omega$-energy of the leaves.  Secondly to show that 
the maps $\varphi_{\F_{n}}$ are roots of unity.  Indeed the finite energy foliations themselves exist for generic area preserving disk maps, not just pseudo-rotations.  This will be shown in \cite{Bramham}.  
\end{remark}

\subsection{Results in the literature of related interest}
Using quite different techniques, Le Calvez proved in 2004, see Theorem 1.9 in \cite{LeCalvez_unlinked},
that every minimal $C^1$-diffeomorphism of the $2$-torus that is homotopic to the identity can be
$C^0$-approximated by periodic $C^1$-diffeomorphisms.  Recall that a diffeomorphism is minimal if every
point is the initial condition for a dense orbit.  Thus, in this result also strongly non-integrable maps
are approximated by, in some sense, integrable ones.

An interesting result about irrational pseudo-rotations in the class of homeomorphisms of the open and
closed annulus homotopic to the identity, was obtained by B\'eguin-Crovisier-LeRoux-Patou
\cite{LeRoux_closed_annulus} and B\'eguin-Crovisier-LeRoux \cite{LeRoux_open_annulus}.
Stated for maps on the closed disk this is as follows.  Let
$\varphi$ be an orientation preserving, measure preserving, homeomorphism of the disk with a single
periodic point and boundary rotation number $\alpha\in\R/\Z$.  Then the rigid rotation $R_{2\pi\alpha}$
is the $C^0$-limit of maps (not necessarily area preserving) conjugate to $\varphi$.
The authors of these papers note that one does not know from their
approach that $\varphi$ is in the closure of the set of maps conjugate to $R_{2\pi\alpha}$.

\subsection{Acknowledgements}
I would particularly like to thank Helmut Hofer for his interest, encouragement, and many
valuable discussions and suggestions.  Also for constructive comments on an earlier version of this 
paper.  I thank Patrice LeCalvez for proposition \ref{P:LeCalvez_observation_intro}, and 
Richard Siefring and Chris Wendl
for many helpful discussions about pseudoholomorphic curves.
I also wish to thank Anatole Katok for his interest.  

This work is based upon work supported by the National Science Foundation under agreement
No.\ DMS-0635607.  Any opinions, findings and conclusions or recommendations in this
material are those of the author and do not necessarily reflect the views of the National
Science Foundation.

\section{Irrational pseudo-rotations}\label{S:pseudo-rotations}

\begin{definition}\label{D:irr_pseudorotation}
A (smooth) \emph{irrational, pseudo-rotation} is a $C^\infty$-diffeomorphism
$\varphi:D\rightarrow D$ of the closed $2$-disk $D$
with the following properties: (1) $\varphi$ preserves the volume form $dx\wedge dy$.
(2) $\varphi(0)=0$.  (3) $\varphi$ has no periodic points in $D\backslash\{0\}$.
\end{definition}

There are equivalent definitions which admit generalizations to rational pseudo-rotations
which we will not need.  See for example \cite{LeCalvez_survey} and \cite{LeRoux_closed_annulus}.

If $\varphi:D\rightarrow D$ is an irrational pseudo-rotation, then the restriction of
$\varphi$ to the boundary is an orientation preserving circle diffeomorphism without periodic
points.  It therefore has irrational rotation number on the boundary.

More precisely, let $\pi:\R\rightarrow\partial D$ be the projection map $x\mapsto e^{2\pi ix}$.
Then for any lift $f:\R\rightarrow\R$ of $\varphi|_{\partial D}$ via $\pi$, the limit
\begin{equation}
		\tau(f):=\lim_{n\rightarrow\infty}\frac{f^n(x)-x}{n}\in\R
\end{equation}
exists and is independent of $x$, see for example \cite{Katok_H}, and is called the
translation number of $f$.  Furthermore, the element $[\tau(f)]\in\R/\Z$ in the quotient space,
is even independent of the choice of lift $f$, and is called the rotation number of $\varphi|_{\partial D}$.

\begin{definition}
Let $\varphi:D\rightarrow D$ be an irrational pseudo-rotation.  Then we define the \emph{rotation number}
of $\varphi$ to be the value on the circle
\[
		      \Rot(\varphi):=[\tau(f)]\in\R/\Z
\]
for any lift $f:\R\rightarrow\R$ of the restriction $\varphi:\partial D\rightarrow\partial D$.
\end{definition}

A preferred homotopy $\{\varphi_t\}_{t\in[0,1]}$ from $\varphi_0=\id_{\partial D}$ to
$\varphi_1=\varphi|_{\partial D}$ gives us a preferred lift of $\id_{\partial D}$.  Namely
the terminal map of the unique lift to a homotopy in the universal covering space which begins at
$\id_{\R}$.  In particular, any Hamiltonian generating $\varphi$ as its time-one map, restricts
to a homotopy on the boundary of the disk from $\id_{\partial D}$ to $\varphi|_{\partial D}$ and
thus determines a canonical lift of the latter.  Using this we define:

\begin{definition}
Let $\varphi:D\rightarrow D$ be an irrational pseudo-rotation.  Let $H_t\in C^\infty(D,\R)$ be
a path of Hamiltonians on $(D,\omega_0=dx\wedge dy)$ generating $\varphi$ as its time-one map.
Then we define the \emph{rotation number of $\varphi$ with respect to $H$} to be the real number
\[
		      \Rot(\varphi;H):=\tau(f)\in\R
\]
where $f:\R\rightarrow\R$ is the canonical lift of $\varphi:\partial D\rightarrow\partial D$
determined by $H$.
\end{definition}

If $\varphi$ is an irrational pseudo-rotation, then the unique periodic point is
non-degenerate in the sense that for all $n\in\N$, the linearization
$D\varphi^{(n)}(0)$ does not have eigenvalue $1$.
The proof is a well known application of the Poincar\'e-Birkhoff
fixed point theorem.  See appendix B.

\section{Finite energy foliations} 
For any $H\in C^\infty(\R/\Z\times D,\R)$ with $H_t:=H(t,\cdot)$ constant on
the boundary of $D$ for each $t\in\R/\Z$, the smooth time-dependent vector field 
$X_{H}(t,\cdot):=X_{H_t}$ on $D$ defined by
\[
			    \omega_0(X_{H_t}(z),\cdot)=-dH_t(z)
\]
for all $z=(x,y)\in D$ is tangent to $\partial D$ and therefore generates a $1$-parameter family of
diffeomorphisms $\phi^t:D\rightarrow D$ over $t\in\R$.  Using that the disk is simply connected it is 
well known that one may find an $H$ for any element $\varphi\in\Diff^{\infty}(D,\omega_{0})$ so that 
$\varphi=\phi^1$.  Then $H$ is said to generate $\varphi$.  

From now on let $\varphi:D\rightarrow D$ be a fixed irrational pseudo-rotation.  Unless stated otherwise 
$H\in C^\infty(\R/\Z\times D,\R)$ is a $1$-periodic time-dependent Hamiltonian generating $\varphi$.

\begin{remark}\label{R:normalizing_periodic_orbit}
By precomposing $H$ with a suitable closed loop in $\Diff^\infty(D,\omega_0)$ based at the identity,
we may assume that the unique $1$-periodic
orbit of $X_{H_t}$ corresponding to the fixed point $0\in D$ of $\varphi$ is the constant trajectory
$t\mapsto 0\in D$ for all $t\in\R$.  This is not necessary, but makes the proof of theorem
\ref{T:existence_and_uniqueness_of_folns_for_pseudorotations} slightly easier.
\end{remark}

Define a smooth vector field $R_{H}$ on the solid torus $Z:=\R/\Z\times D$ by
\begin{equation}\label{E:defn_of_R}
		  R_H(\tau,z):=\partial_\tau+X_H(\tau,z)
\end{equation}
for all $\tau\in\R/\Z,z\in D$.
The first return map on $\{0\}\times D$ is canonically identified with the pseudo-rotation
$\varphi$.

For each $n\in\N$ let $Z_n$ be the $3$-manifold-with-boundary $\R/n\Z\times D$, and $R_n$
the vector field on $Z_{n}$ that projects down to $R_{H}$ under the natural projection
$\pi_{n}:Z_{n}\rightarrow Z$.
Clearly the first return map of the flow generated by $R_n$ is the $n$-th iterate
$\varphi^{n}:D\rightarrow D$.  We will refer to the pair $(Z_n,R_n)$ as
\emph{the mapping torus of length-$n$ associated to $H$}.  It will also be useful to
denote by $Z_\infty:=\R\times D$ the universal covering of each $Z_n$.

All the dynamical information on $(Z_n,R_n)$ can be captured by an almost complex structure on the
$4$-manifold $\R\times Z_{n}$ as follows.  For each $n\in\N\cup\{\infty\}$ define $J_{n}$ on $\R\times Z_{n}$
by
\begin{equation}\label{E:defn_of_Jtilde}
 \left\{\begin{aligned}
		&J_n(a,\tau,z)\partial_{\R}=R_n \\
		&J_n(a,\tau,z)|_{T_{z}D}=i
 \end{aligned}\right.
\end{equation}
for all $(a,\tau,z)\in\R\times Z_n$.  Here, $\partial_{\R}$ is the vector field dual to the $\R$-coordinate
on $\R\times Z_{n}$, and $i$ denotes the constant almost complex structure
on the disk coming from the standard integrable complex structure on $\C$
\footnote{For convenience we use this ``non-generic'' almost complex structure, although it is 
not necessary.  The pseudoholomorphic curves we encounter are either orbit cylinders or embedded 
with genus zero, one boundary component, and Fredholm index $2$.  Such curves are automatically regular.}.
In other words $i\partial_x=\partial_y$ and $i\partial_y=-\partial_x$.
Observe that $J_n$ is independent of the $\R$-coordinate on $\R\times Z_{n}$, referred to as \emph{$\R$-invariance}.
This idea of coupling a suitable conservative vector field in an odd-dimensional manifold with the
$\R$-direction in the product $4$-manifold by an almost complex structure is due to Hofer \cite{Hofer_93}.

There is a $1$-parameter family of $2$-tori
\[
		    L_c:=\{c\}\times\partial Z_{n}
\]
for $c\in\R$, that fill the boundary $\R\times\partial Z_{n}$ of the $4$-manifold.  Each $L_{n}$
is totally real with respect to the almost complex structure $J_{n}$, that is
\[
		      TL_c\oplus J_{n}T(L_c)
\]
is the full $4$-dimensional tangent space at each point of $L_c$.  These will form the boundary
conditions for our pseudoholomorphic curves with boundary.

Let us describe the $J_{n}$-holomorphic half infinite cylinders with totally real boundary
conditions that we are interested in.
Let $\R^+=[0,\infty)$ and $\R^-=(-\infty,0]$.  For $n\in\N$ we consider maps
$\u=(a,\tau,z)\in C^\infty(\R^\pm\times\R/n\Z,\R\times Z_n=\R\times\R/n\Z\times D)$ for which there
exists $c\in\R$ such that
\begin{equation}\label{E:hol_curve_equation_1}
\left\{\begin{aligned}
		\partial_s\u(s,t)+J_n(\u(s,t))\partial_t\u(s,t)=0\qquad&\mbox{for all }(s,t)\in\R^\pm\times\R/n\Z \\
		  \u(0,t)\in L_c\qquad&\mbox{for all }t\in\R/n\Z\\
		  \tau(0,\cdot):\R/n\Z\rightarrow\R/n\Z\qquad&\mbox{has degree }+1, 
\end{aligned}\right.
\end{equation}
having so called finite total energy, which we define in a moment.

This setting is a special case of that described in \cite{BEHWZ_SFT_compactness}.
In particular $(\R\times Z_n,J_n)$ is a cylindrical symmetric almost complex manifold, and the almost
complex structure $J_n$ is compatible with the stable Hamiltonian structure
$(\omega_n,\lambda_n)$ on $Z_{n}$ given by 
\begin{equation}\label{E:SHS}
\left\{\begin{aligned}
		 &\omega_n=dx\wedge dy+d\tau\wedge dH\\
		 &\lambda_n=d\tau
\end{aligned}\right.
\end{equation}
in coordinates $(\tau,(x,y))$ on $\R/n\Z\times D$. Recall that this means that 
$\lambda_{n}\wedge\omega_{n}>0$ and $\ker(\omega_{n})\subset\ker(d\lambda_{n})$, see for example 
\cite{BEHWZ_SFT_compactness} or \cite{Cieliebak_Volkov}.  
The compactness theory in \cite{BEHWZ_SFT_compactness}
leads us to consider the following two quantities for a solution
to (\ref{E:hol_curve_equation_1}) which
we will refer to as the $\omega$-energy, the $\lambda$-energy, and the sum of them as the total energy.
In our context the $\lambda$-energy of a solution $\u=(a,\tau,z)\in\R\times\R/n\Z\times D$ to
(\ref{E:hol_curve_equation_1}) is the quantity
\begin{equation}\label{E:lambda_energy}
 E_\lambda(\u):=\sup_{\psi\in\mathcal{C}}\int_{\R^+\times\R/n\Z}\u^{*}\Big(\psi(a)da\wedge d\tau\Big)\in[0,+\infty]
\end{equation}
where $\mathcal{C}$ is the set of smooth functions $\psi:\R\rightarrow[0,\infty)$ for which
$\int_\R\psi(s)ds=1$.  The second energy, that which in the more general context of \cite{BEHWZ_SFT_compactness}
is called the $\omega$-energy, is
\begin{equation}\label{E:omega_energy}
	E_\omega(\u):=\int_{\R^+\times\R/n\Z}\u^*\omega_n \in [0,+\infty].
\end{equation}
In section \ref{S:compactness} we will prove the following.

\begin{lemma}\label{L:nice_form}
Let $n\in\N$.  Suppose $\u\in C^\infty(\R^+\times\R/n\Z,\R\times Z_n)$ is a
solution to (\ref{E:hol_curve_equation_1}) with $E_\lambda(\u)<\infty$.  Then there exists
$(s_0,t_0)\in\R\times\R/n\Z$ so that
\begin{equation}\label{E:nice_form}
		      \u(s+s_0,t+t_0)=(s,t,z(s,t))
\end{equation}
for all $(s,t)\in[-s_0,\infty)\times\R/n\Z$, where $z\in C^\infty([-s_{0},\infty)\times\R/n\Z,D)$ satisfies
the Floer equation
\begin{equation}\label{E:Floer_eqn}
		 \partial_sz(s,t)+i\Big(\partial_tz(s,t)-X_{H}(t,z(s,t))\Big)=0
\end{equation}
for all $(s,t)\in[-s_0,\infty)\times\R/n\Z$.
\end{lemma}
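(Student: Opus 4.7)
The strategy is to decouple the equation into a part governing $(a,\tau)$ and a part governing $z$, use complex analysis on the half-cylinder to pin down $(a,\tau)$, and then read off the Floer equation for $z$. I would start by expanding $\u=(a,\tau,z)$ in the local frame $(\partial_{\R},R_{n},\partial_{x},\partial_{y})$, in which $\partial_{\tau}=R_{n}-X_{H}$. Since $J_{n}\partial_{\R}=R_{n}$, $J_{n}R_{n}=-\partial_{\R}$, and $J_{n}|_{TD}=i$, the equation $\partial_{s}\u+J_{n}\partial_{t}\u=0$ splits into the Cauchy--Riemann system
\[
 \partial_{s}a=\partial_{t}\tau,\qquad \partial_{s}\tau=-\partial_{t}a
\]
and a $TD$-equation
\[
 \partial_{s}z+i\partial_{t}z=(\partial_{s}\tau+i\partial_{t}\tau)\,X_{H}(\tau,z).
\]
Thus $F:=a+i\tau$ is a holomorphic map from $\R^{+}\times\R/n\Z$ into $\C/in\Z$.

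Next, using the degree $+1$ boundary condition on $\tau(0,\cdot)$, the lift of $F(s,t)-(s+it)$ to the universal cover is $n$-periodic in $t$ and so descends to a single-valued holomorphic function $g:\R^{+}\times\R/n\Z\to\C$. The Lagrangian boundary condition $\u(0,t)\in L_{c}$ forces $\Re g(0,t)=c$. Under the biholomorphism $\zeta=e^{2\pi(s+it)/n}$, the half-cylinder maps onto $\{|\zeta|\geq 1\}$ and $g$ becomes a holomorphic function $G(\zeta)$ with $\Re G\equiv c$ on $|\zeta|=1$. The crucial analytic input is the asymptotic convergence theorem for finite $\lambda$-energy pseudoholomorphic half-cylinders at cylindrical ends (as in \cite{BEHWZ_SFT_compactness}): as $s\to\infty$, $\u(s,\cdot)$ converges in $C^{\infty}$ to a trivial cylinder over a periodic orbit of $R_{n}$, so $a(s,t)-s$ and $\tau(s,t)-t$ approach constants and $G$ is bounded near $\infty$. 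By Riemann's removable singularity theorem, $G$ extends holomorphically across $\infty$ to a compact topological disk, and $\Re G$ is a harmonic function on this disk with constant boundary value $c$. The maximum principle forces $\Re G\equiv c$, and then Cauchy--Riemann forces $G$ itself to be constant.

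Therefore $g\equiv s_{0}'+it_{0}'$ for some constants, giving $F(s,t)=s+it+s_{0}'+it_{0}'$. Choosing $s_{0}=-s_{0}'$, $t_{0}=-t_{0}'$ and relabelling $z$, one obtains the claimed form $\u(s+s_{0},t+t_{0})=(s,t,z(s,t))$, and substituting $\partial_{s}\tau=0$, $\partial_{t}\tau=1$ into the $TD$-equation recovers (\ref{E:Floer_eqn}). The main obstacle is the asymptotic convergence step used to control $G$ near $\infty$; it rests on the compactness/exponential decay framework of \cite{BEHWZ_SFT_compactness} and requires the periodic orbits of $R_{n}$ to be non-degenerate, which the pseudo-rotation hypothesis supplies through the Poincar\'e--Birkhoff argument cited at the end of Section \ref{S:pseudo-rotations}. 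In principle one could bypass this appeal by analyzing the Laurent expansion of $G$ at $\infty$ directly, using the coarea identity $|\nabla a|^{2}ds\,dt=da\wedge d\tau$ to convert finite $E_{\lambda}$ into a uniform bound on the multiplicity of level sets and then combining with the degree $+1$ boundary condition, but the asymptotic route is cleaner.
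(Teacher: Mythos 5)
Your decomposition of the Cauchy--Riemann system is exactly the paper's: you obtain $a_s=\tau_t$, $a_t=-\tau_s$ and the $TD$-component equation, and you correctly observe that $F=a+i\tilde\tau$ is holomorphic with the winding normalized by the degree-$+1$ boundary condition, so $g=F-(s+it)$ descends to a single-valued holomorphic function. The problem is the step you yourself flag as crucial: controlling $G$ near $\zeta=\infty$. You invoke asymptotic convergence of $\u(s,\cdot)$ to a trivial cylinder over a periodic orbit as $s\to\infty$, attributing this to \cite{BEHWZ_SFT_compactness}, but that result requires finite \emph{total} energy (in particular finite $\omega$-energy), not merely $E_\lambda(\u)<\infty$. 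Lemma~\ref{L:nice_form} is stated, and is used elsewhere in the paper, under the hypothesis $E_\lambda(\u)<\infty$ alone, with no assumption on $E_\omega$. Under that weaker hypothesis you have no a priori reason for $g$ (equivalently $G$) to be bounded near the puncture; for instance the harmonic function $\Re G$ need not extend past $\infty$, and there are nonconstant holomorphic $G$ on $\{|\zeta|\geq1\}$ with $\Re G$ constant on $|\zeta|=1$ once boundedness is dropped. So as written the argument does not close.

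The paper avoids this by never appealing to asymptotics: it works only with the two-dimensional holomorphic map $f=(a,\tau)$. If $\nabla a$ were unbounded, a rescaling/bubbling argument produces a nonconstant holomorphic plane $g:\C\to\R\times\R/n\Z$ with bounded gradient and $E_\lambda(g)\leq E_\lambda(\u)<\infty$; Liouville forces $g$ to be affine with nonzero constant gradient, which makes $E_\lambda(g)=+\infty$, a contradiction. Hence $E_\lambda(\u)<\infty$ already gives $\|\nabla a\|_{L^\infty}<\infty$. With that gradient bound in hand, $a$ extends by Schwarz reflection across the Lagrangian boundary to a harmonic function on $\C$ with bounded gradient, and Liouville once more forces $a,\tau$ affine; the degree-$+1$ condition then pins the slopes to $(1,1)$. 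Your final remark about ``analyzing the Laurent expansion of $G$ directly using the coarea identity $|\nabla a|^2\,ds\,dt = da\wedge d\tau$'' is in fact pointing at exactly this mechanism, so your instinct for a repair is right --- but you need to actually run that argument (bubbling from finite $E_\lambda$ plus Liouville) rather than substitute an asymptotics theorem whose hypotheses you do not have.
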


\noindent This is a converse to ``Gromov's trick'' \cite{Gromov}.

It follows that if a solution $\u=(a,\tau,z)$ to (\ref{E:hol_curve_equation_1}) has finite $\lambda$-energy
then $E_\lambda(\u)=n$, and the $\omega$-energy of $\u$ is equal to the Floer energy of $z$;
\[
  E_\omega(\u)=\frac{1}{2}\int_{s=0}^{+\infty}\int_{t=0}^n \big|\partial_sz(s,t)\big|^2+
					    \big|\partial_tz(s,t)-X_{H}(t,z(s,t))\big|^2 ds dt.
\]

\begin{definition}
For a solution $\u=(a,\tau,z)\in C^\infty(\R^\pm\times\R/n\Z,\R\times Z_n)$ to (\ref{E:hol_curve_equation_1})
we refer to the degree of the circle map 
\[
		z(0,\cdot):\R/n\Z\rightarrow\partial D
\]
as the \emph{boundary index} of $\u$.
\end{definition}

\begin{theorem}\label{T:existence_and_uniqueness_of_folns_for_pseudorotations}
Let $H\in C^{\infty}(\R/\Z\times D,\R)$ be a Hamiltonian generating an irrational
pseudo-rotation $\varphi$.  Let $(Z_1,R_1),(Z_2,R_2),\ldots$ be the corresponding sequence
of mapping tori.  For each $n\in\N$ let $\gamma_n:\R/n\Z\rightarrow Z_n$ be the unique $n$-periodic
orbit of $R_n$ for which $\gamma_n(0)\in\{0\}\times D$.  Assume $H$ was chosen so that $\gamma(t)=(t,0)$
for all $t\in\R/\Z$ (see remark \ref{R:normalizing_periodic_orbit}).  Let $\alpha:=\Rot(\varphi;H)\in\R$, 
which is necessarily irrational.  

Then for each $n\in\N$ there exist two foliations $\F_n^+,\F_n^-$ of $\R\times Z_n$ by smoothly
embedded surfaces, with the following properties:
\begin{itemize}
 \item \textbf{Cylinder leaf:} The cylinder $C_n:=\R\times\gamma_n(\R/n\Z)\subset\R\times Z_n$ is a leaf in both
 $\F_n^+$ and $\F_n^-$.
 \item \textbf{Pseudo-holomorphic:} If $F\in\F_n^+$ (resp. $F\in\F_n^-$) is not $C_n$, then $F$ is parameterized by
 a solution $\u$ to (\ref{E:hol_curve_equation_1}) with $J_{n}$ as in 
 (\ref{E:defn_of_Jtilde}), with $E_\lambda(\u)+E_\omega(\u)<\infty$
 and boundary index $\lfloor n\alpha\rfloor$  (resp. $\lceil n\alpha\rceil$).
 \item \textbf{$\R$-invariance:} If $F\in\F_n^+$ (resp. $F\in\F_n^-$) is a leaf and $c\in\R$, the set
 $F+c:=\{(a+c,\tau,z)\,|\,(a,\tau,z)\in F \}$ is also a leaf in $\F_n^+$  (resp. in $\F_n^-$).
 \item \textbf{Uniqueness:} $\F_n^+$ and $\F_n^-$ are uniquely determined by the above properties.
 \item \textbf{Smooth foliation:} $\F_n^+$ and $\F_n^-$ are $C^\infty$-smooth foliations at each point on the
  complement of $C_n$.
\end{itemize}
\end{theorem}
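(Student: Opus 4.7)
The plan is to adapt the Hofer-Wysocki-Zehnder construction of finite energy foliations from \cite{HWZ_tight,HWZ_convex,HWZ_tightII,HWZ_fef} to our cylindrical symmetric $4$-manifolds $(\R \times Z_n, J_n)$ with totally real torus boundary conditions $L_c$. By Lemma \ref{L:nice_form}, every non-trivial leaf of finite $\lambda$-energy is, up to translation in source and target, the graph of a Floer half-cylinder solving (\ref{E:Floer_eqn}), with $z(0,\cdot): \R/n\Z \to \partial D$ of winding number $\lfloor n\alpha \rfloor$ for $\F_n^+$ and $\lceil n\alpha \rceil$ for $\F_n^-$. I would therefore set up moduli spaces $\mathcal{M}_n^{\pm}$ of such Floer half-cylinders, modulo the $\R \times \R/n\Z$ symmetries. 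A Fredholm computation using the Conley-Zehnder index of the asymptotic orbit $\gamma_n$ together with the Maslov index of the boundary loop shows the expected dimension equals $2$ — the correct number of parameters to fill out a foliation of the $4$-manifold — and automatic transversality (as in the paper's footnote) makes $\mathcal{M}_n^{\pm}$ a smooth manifold.

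Next, I would establish compactness. By the SFT compactness theorem \cite{BEHWZ_SFT_compactness}, suitably adapted to totally real boundary conditions, any sequence in $\mathcal{M}_n^{\pm}$ has a subsequence converging to a holomorphic building. The pseudo-rotation hypothesis is essential here: since $R_n$ has $\gamma_n$ as its only periodic orbit, the only breaking possible in the SFT limit is along the orbit cylinder $C_n$. Boundary bubbling along the totally real tori $L_c$ is excluded via monotonicity together with the fact that the boundary index is a discrete invariant preserved under convergence, and together with the $\omega$-energy bound this also rules out nodal degenerations in the interior.

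To produce at least one curve in $\mathcal{M}_n^\pm$, I would use a continuation argument. Starting from a reference Hamiltonian $H_0$ generating a small rigid rotation, for which an explicit $S^1$-family of finite energy half-cylinders can be written down, I would deform continuously to $H_1 = H$. Provided each intermediate $H_s$ stays within the class of Hamiltonians whose time-one map has the origin as its unique periodic orbit, compactness ensures the foliation persists along the path; if that class is not connected for the given $H$, I would instead deform the almost complex structure on the fibres $D$, keeping the dynamics fixed, or invoke a Hofer-style bubbling-off argument seeded by a disk with suitable Maslov boundary condition. Once leaves exist, positivity of intersections in the sense of Siefring-Wendl shows that distinct leaves in $\mathcal{M}_n^{\pm}$ are disjoint; combined with $\R$-invariance and the correct dimension count, the union of leaves fills $\R \times Z_n \setminus C_n$. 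Uniqueness follows because any competing foliation satisfying the listed properties produces curves lying in $\mathcal{M}_n^\pm$, so positivity of intersections forces the two to coincide. Smoothness off $C_n$ is a standard consequence of elliptic regularity applied to the underlying Floer half-cylinders.

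The main obstacle will be the compactness analysis near the orbit cylinder $C_n$: one must control precisely how sequences of leaves converge to $C_n$, which demands Siefring's asymptotic expansions of pseudoholomorphic curves approaching a non-degenerate periodic orbit together with his intersection theory. The non-degeneracy of $\gamma_n$ for all $n$ established in appendix B is exactly what enables these tools, but gluing must still be used to argue that SFT-limits of moduli space elements recompactify to give a moduli space with no unwanted boundary strata, so that no curves are lost at infinity in the deformation. A secondary difficulty is arranging the continuation source: one needs an explicit model Hamiltonian with a known finite energy foliation realising the correct boundary index, and the deformation must avoid walls where the moduli space changes discontinuously.
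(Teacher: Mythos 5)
Your toolbox is the right one (SFT compactness, Siefring's intersection theory, automatic transversality, Fredholm index $2$), and your Fredholm/intersection framing matches the paper's. But the way you propose to \emph{seed} the moduli space has a real gap, and you have missed a serious issue about the boundary that the paper spends an entire subsection circumventing.

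Your continuation argument deforms $H_0$ (generating a small rigid rotation) to $H_1=H$ through Hamiltonians whose time-one map has the origin as its only periodic orbit. You acknowledge that this class might not be path-connected and offer as alternatives (i) deforming $J$ on the disk fibres while keeping the dynamics fixed, or (ii) a Hofer-style bubbling-off argument. Neither is worked out, and (i) cannot work: the asymptotic Reeb orbit, the mapping-torus dynamics, and hence the totally real boundary data all depend on $H$, so no amount of varying the fibrewise $J$ will carry a moduli problem for $H_0$ to one for $H$. The paper avoids this altogether: instead of a path of cylindrical almost complex manifolds, it builds a single almost complex $4$-manifold $(W,\Jhat)$ with two cylindrical ends, the positive one modelled on a rigid rotation $H_+(\tau,z)=\pi\alpha|z|^2+C$ and the negative one on $H_-=H$, with $\max H_+<\min H_-$ so that $\partial_aH<0$ on the neck and $\Jhat$ tames $\Omega=dx\wedge dy+d\tau\wedge dH$ there. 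The explicit $J_+$-holomorphic half cylinders $\u_{c,z}$ in the positive end seed the moduli space inside $W$; SFT compactness with a marked point converging into the orbit cylinder $C$ forces breaking, and the lower-level component of the building is the desired half cylinder in the cylindrical manifold $(\R\times Z,J_-)$ for $H$ itself. This cobordism mechanism is what replaces your continuation step, and it does not require path-connectedness in any space of pseudo-rotation Hamiltonians.

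The second missing piece: in that cobordism the boundary $\R\times\partial Z$ carries a nontrivial interpolation of dynamics, and SFT compactness with boundary requires a $\Jhat$-holomorphic filling of $\R\times\partial Z$. The paper shows (Section 7.5) that such a filling exists exactly when $\varphi|_{\partial D}$ is smoothly conjugate to a rigid rotation; hence the theorem is first proved under that extra hypothesis, and then removed by perturbing $H$ near the boundary so that the boundary rotation number becomes Diophantine, invoking Herman's linearization theorem, and passing to a $C^\infty$-limit of the resulting foliations. Your proposal has no counterpart of this step — you treat the boundary only via ``monotonicity'' and the discreteness of the boundary index — which would not control boundary degenerations in the cobordism where $\Jhat$ is not $\R$-invariant. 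In short: your outline correctly identifies the analytic infrastructure, but the two ideas that actually make the existence proof close — the exact cobordism with an explicit foliated end, and the Herman-theorem regularization of the boundary — are absent, and the continuation argument you offer in their place does not go through.
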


The proof is postponed to section \ref{S:pf_of_fef}.  This a special case of a much more general result
to appear in \cite{Bramham}.

\begin{remark}
For each leaf $F\in\F_n^+$ (resp. $F\in\F_n^-$) that is not the cylinder, any parameterization $\u$
satisfying (\ref{E:hol_curve_equation_1}) has domain $\R^+\times\R/n\Z$
(resp. $\R^-\times\R/n\Z$).  Hence the superscripts in $\F_n^\pm$.  In either case, as the
unique $n$-periodic orbit $\gamma_n$ is non-degenerate the finite energy of $\u$ implies that
the $Z_n$ component $u_\pm(s,\cdot):\R/n\Z\rightarrow Z_n$ converges to $\gamma_n$ uniformly in
$C^\infty(\R/n\Z,Z_n)$ as $s\rightarrow+\infty$ (resp. $s\rightarrow-\infty$).  This can be seen
in two ways.  Either as a consequence of the compactness results in \cite{Hofer_93} applied to $\u$
(as generalized in \cite{HWZ_propI,BEHWZ_SFT_compactness}); or, via lemma \ref{L:nice_form},
the original work of Floer \cite{Floer_unreg_grad_flow} applied to the disk component $z$.
\end{remark}

\begin{remark}
Let $n\in\N$.  The foliations $\F_n^\pm$ of $\R\times Z_n$ can be visualized as follows.
Under the projection map $:\R\times Z_n\rightarrow Z_n$, $\F_n^+$ and $\F_n^-$ project to smooth
foliations of $Z_n\backslash\{\gamma_n\}$ by smoothly embedded surfaces diffeomorphic to $\R/n\Z\times(0,1]$.
The vector field $R_n=\partial_\tau+X_{H_\tau}$ on $Z_n$ is transverse to these leaves coming from
$\F_n^+$ and $\F_n^-$ in opposite directions.  A typical transverse disk slice to either of these
looks something like in figure
\ref{F:transverse_slice1}.
\end{remark}

\begin{figure}[hbt]
\begin{center}
\includegraphics[scale=.3]{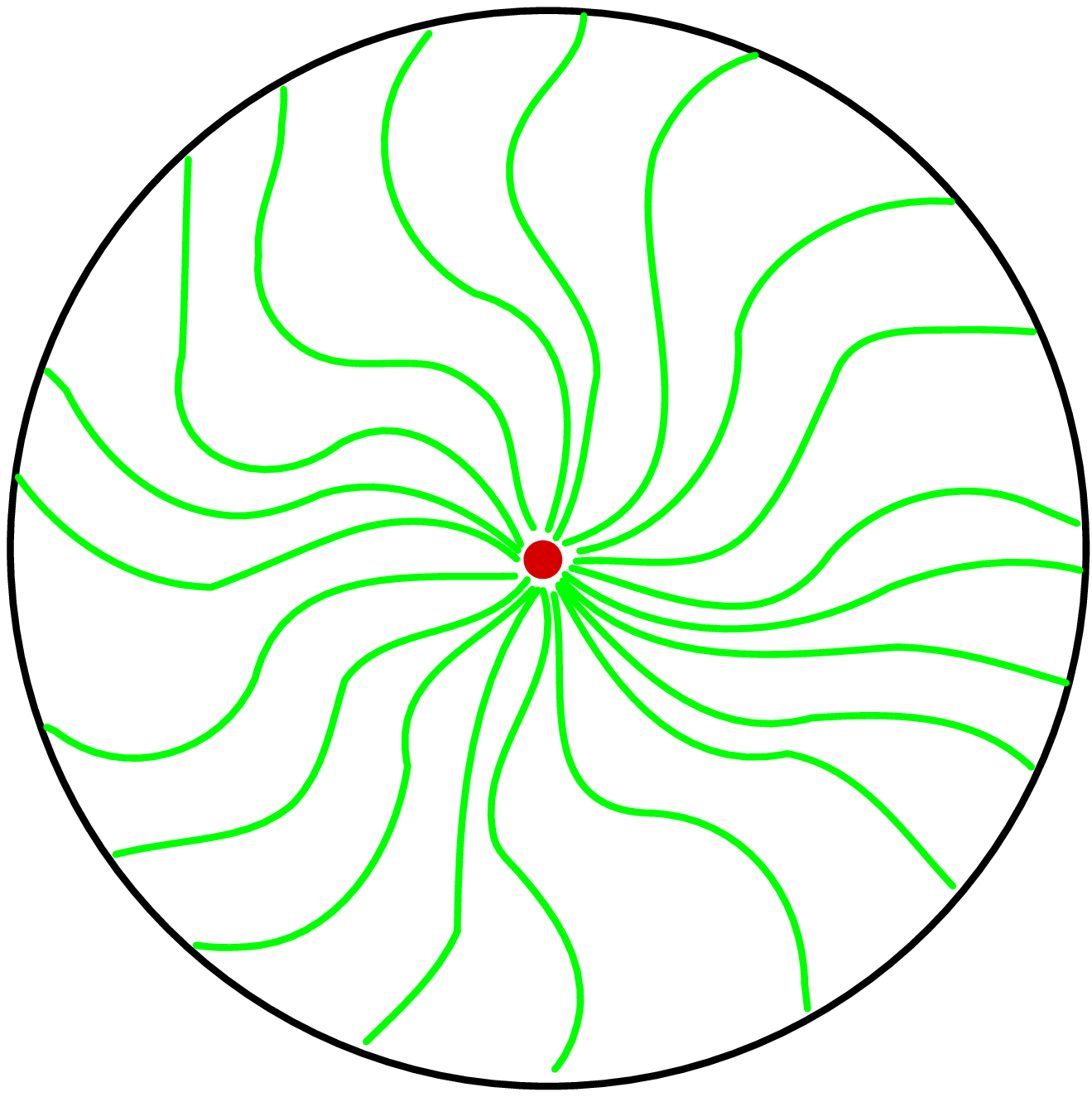}
\end{center}
\caption{}
\label{F:transverse_slice1}
\end{figure}

The following formula will be crucial to our application.  Recall that $\alpha:=\Rot(\varphi;H)\in\R$.  
In particular $\alpha$ is irrational.

\begin{lemma}\label{L:formula_1}
Let $n\in\N$.  For any half cylinder leaf $F\in\F_n^+$,
\[
		      E_\omega(F)=\{n\alpha\}\pi
\]
where $\{\cdot\}$ denotes the fractional part of a real number.
\end{lemma}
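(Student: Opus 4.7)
My plan is to compute $E_\omega(F)$ by applying Stokes' theorem to a primitive of $\omega_n$ on the graph parameterization of $F$ supplied by Lemma~\ref{L:nice_form}, and then to reduce the final statement to an identity relating $H$ to the boundary rotation number $\alpha$.

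After an $\R$-translation, $F$ is the image of $\u(s,t)=(s,t,z(s,t))$ on $[0,\infty)\times\R/n\Z$, where $z$ solves the Floer equation \eqref{E:Floer_eqn}, the boundary loop $z(0,\cdot)\colon\R/n\Z\to\partial D$ has degree $\lfloor n\alpha\rfloor$, and $z(s,\cdot)\to 0$ in $C^\infty$ as $s\to\infty$ (the standard asymptotic convergence at the nondegenerate orbit $\gamma_n$). Since $\omega_n=d\beta$ for $\beta:=\lambda-H\,d\tau$ with $\lambda=\tfrac12(x\,dy-y\,dx)$, Stokes on the truncated cylinder $[0,R]\times\R/n\Z$ followed by $R\to\infty$ yields
\[
E_\omega(F) \;=\; \int_0^n\bigl(h(t)-H(t,0)\bigr)\,dt \;-\; \lfloor n\alpha\rfloor\,\pi.
\]
Here the $s=0$ contribution uses that $\int_{\partial D}\lambda=\pi$ per traversal, so the degree-$\lfloor n\alpha\rfloor$ boundary loop contributes $\lfloor n\alpha\rfloor\pi$, together with $H|_{\partial D}=h(t)$; and the $s=R\to\infty$ contribution uses $C^\infty$-convergence $z(R,\cdot)\to 0$ to give $\u^*\lambda\to 0$ and $\u^*(H\,d\tau)\to H(t,0)\,dt$. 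Equivalently, a direct calculation using the Floer equation gives $\u^*\omega_n=|\partial_s z|^2\,ds\wedge dt$, so that $E_\omega(F)$ is the Floer energy $\int|\partial_s z|^2\,ds\,dt$ and equals the action difference $\A_H(z(0,\cdot))-\A_H(\gamma_n)$ for $\A_H(x)=-\int x^*\lambda+\int H(t,x(t))\,dt$, leading to the same formula.

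The lemma then reduces to the identity
\[
\int_0^1\bigl(h(t)-H(t,0)\bigr)\,dt \;=\; \alpha\pi,
\]
since it gives $E_\omega(F)=n\alpha\pi-\lfloor n\alpha\rfloor\pi=\{n\alpha\}\pi$. Geometrically the left-hand side is, by a direct Stokes computation, the signed $\omega_0$-area of the $2$-chain swept by a radial segment from $0$ to $\partial D$ under one period of the Hamiltonian isotopy generated by $H$. For the rigid rotation $R_{2\pi\alpha}$ generated by $\alpha\pi r^2$ this chain is a sector of area exactly $\alpha\pi$, so the identity is immediate. For a general pseudo-rotation, one argues by invariance: both sides transform by the same integer multiple of $\pi$ under modifications of the generating isotopy by Hamiltonian loops at the identity (a loop with boundary winding $m$ changes $\alpha$ by $m$ and changes $\int[h-H(0)]\,dt$ by $m\pi$, as can be verified on the canonical representatives $K_t=c(t)\pi r^2$), and one then reduces to the rigid-rotation case through a deformation of generating isotopies.

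The main obstacle I expect is making the last reduction rigorous: while invariance under the standard ``rigid-rotation'' loops is easy, extending it to arbitrary Hamiltonian loops on $(D,\omega_0)$ fixing the origin---and hence identifying the invariant $\int_0^1[h-H(0)]\,dt-\alpha\pi$ with $0$ for every generator of $\varphi$---is where the pseudo-rotation hypothesis on $\varphi$ must be invoked, presumably via the uniqueness of the $n$-periodic orbit and the resulting rigidity of Hamiltonians generating $\varphi$. This is consistent with the remark following Theorem~\ref{T:existence_and_uniqueness_of_folns_for_pseudorotations} identifying this lemma as one of only two points in the argument where the pseudo-rotation assumption is genuinely used.
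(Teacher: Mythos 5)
Your Stokes computation is correct and agrees with the paper's: writing $E_\omega(F)=\Act_n(\gamma_n)-\Act_n(u(0,\cdot))$ for the primitive $\eta_n=\lambda_0-H\,d\tau$ of $\omega_n$, and using that the action of a loop in $\partial Z_n$ depends only on its homology class, you correctly reduce the lemma to the single identity $\Act_1(\gamma_1)=\Act_1(1_{\R/\Z})+\alpha\,\Act_1(1_{\partial D})$, which in your notation reads $\int_0^1\bigl(h(t)-H(t,0)\bigr)\,dt=\alpha\pi$. This is exactly Corollary~\ref{L:action_of_gamma_1}.

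Where you diverge from the paper --- and where, as you already suspect, your argument has a real gap --- is in your proposed proof of this identity. The invariance under ``Hamiltonian loops at the identity'' that you verify on canonical representatives $K_t=c(t)\pi r^2$ does establish that the quantity $\int_0^1[h-H(0)]\,dt-\alpha\pi$ is unchanged when $H$ is modified by such a loop; but to conclude it vanishes you would then need to connect a generator of the given $\varphi$ to a generator of a \emph{rigid rotation} through Hamiltonians, and it is not clear how to do this in a way that preserves whatever structure makes the quantity $0$. The uniqueness of the periodic orbit does not straightforwardly provide the needed rigidity of the generating Hamiltonian: one can precompose $H$ by any Hamiltonian loop fixing the origin, so $H$ itself is far from rigid. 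Your observation that the pseudo-rotation hypothesis must enter is correct, but it enters differently in the paper.

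The paper's proof of the identity sidesteps deformations in $\varphi$ entirely. It exploits the presence of \emph{both} foliations $\F_n^{\pm}$: the boundary loops of half cylinders in $\F_n^+$ have degree $\lfloor n\alpha\rfloor$ and those in $\F_n^-$ have degree $\lceil n\alpha\rceil$, and since $E_\omega\ge 0$ for curves of either sign one obtains two-sided action inequalities (Lemma~\ref{L:action_inequalities_on_gamma_n}):
\begin{equation*}
\Act_n(1_{\R/n\Z})+\lfloor n\alpha\rfloor\Act_n(1_{\partial D})\;\le\;\Act_n(\gamma_n)\;\le\;\Act_n(1_{\R/n\Z})+\lceil n\alpha\rceil\Act_n(1_{\partial D}).
\end{equation*}
Now the scaling relations $\Act_n(\gamma_n)=n\Act_1(\gamma_1)$, $\Act_n(1_{\R/n\Z})=n\Act_1(1_{\R/\Z})$, $\Act_n(1_{\partial D})=\Act_1(1_{\partial D})$, which hold because $\gamma_n$ and $1_{\R/n\Z}$ are $n$-fold covers, let one divide by $n$ and send $n\to\infty$; since $\lfloor n\alpha\rfloor/n\to\alpha$ and $\lceil n\alpha\rceil/n\to\alpha$, the squeeze pins down the exact value of $\Act_1(\gamma_1)$. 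The pseudo-rotation hypothesis enters because it is what guarantees the existence and boundary indices of the foliations $\F_n^\pm$ for all $n$. So the paper replaces your (un-filled) deformation with a squeeze across all period lengths $n$ using both foliations. You should adopt that argument in place of your final paragraph; the rest of your computation stands.
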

This is proven in section \ref{S:omega_energy}.

\section{Proof of theorem \ref{T:main_result_intro}}\label{S:pf_main_result}
We use the finite energy foliations of theorem
\ref{T:existence_and_uniqueness_of_folns_for_pseudorotations} to define new disk maps.

\begin{definition}\label{D:approx_map}
For each $n\in\N$ define $\varphi_{n}:D\rightarrow D$
as follows.  For $\xi\in D$ there is a unique leaf $F\in\F_{n}^{+}$ containing
$(0,0,\xi)\in\R\times\R/n\Z\times D$.  Define $\varphi_n(\xi)=\xi'$
where $\xi'\in D$ is unique such that $(0,0,\xi')\in F$.
\end{definition}

\begin{remark}
$\varphi_n$ is well defined: by lemma \ref{L:nice_form} if a leaf $F\in\F_{n}^{+}$ intersects the hypersurface
\[
	  \Zhat_n:=\{0\}\times Z_{n}
\]
then it does so transversally, and for each $\tau\in\R/n\Z$ it will intersect the disk
slice $\{\tau\}\times D\subset\Zhat_{n}$ in a unique point.
\end{remark}

\begin{remark}
We could as easily define maps in terms of the foliations $\F_n^-$.
\end{remark}

\begin{remark}
The inverse map $\varphi_{n}^{-1}$ exists and can be defined similarly in terms of $\F_n^+$.
\end{remark}

\begin{lemma}
Each map $\varphi_{n}:D\rightarrow D$ is $n$-periodic, that is
$(\varphi_{n})^{n}=\id_{D}$.
\end{lemma}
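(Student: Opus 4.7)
The plan is to exploit the $1$-periodicity of $H$ in $\tau$ to exhibit a biholomorphic symmetry of $(\R\times Z_{n},J_{n})$ which, by the uniqueness in Theorem~\ref{T:existence_and_uniqueness_of_folns_for_pseudorotations}, permutes the leaves of $\F_{n}^{+}$; once this symmetry is in hand, the $n$-periodicity of $\varphi_{n}$ follows by tracking the closed intersection curve $F_{\xi}\cap\Zhat_{n}$.  The only substantive step is the uniqueness identity; the rest is bookkeeping.

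First I would introduce the translation $\sigma:\R\times Z_{n}\to\R\times Z_{n}$ defined by $\sigma(a,\tau,z):=(a,\tau+1,z)$.  Since $H_{\tau+1}=H_{\tau}$, the vector field $R_{n}=\partial_{\tau}+X_{H_{\tau}}$ of (\ref{E:defn_of_R}) is $\sigma$-invariant, and inspection of (\ref{E:defn_of_Jtilde}) then gives $\sigma^{*}J_{n}=J_{n}$.  I would then verify that $\sigma(\F_{n}^{+})$ satisfies every bullet of Theorem~\ref{T:existence_and_uniqueness_of_folns_for_pseudorotations}: the cylinder $C_{n}=\R\times\gamma_{n}(\R/n\Z)$ is preserved because $\gamma_{n}(t)=(t,0)$ is setwise invariant under $\sigma$; $\R$-translation in the $a$-coordinate commutes with $\sigma$; and if a leaf is parameterized by a solution $\u$ to (\ref{E:hol_curve_equation_1}) with boundary index $\lfloor n\alpha\rfloor$ and finite total energy, then $\sigma\circ\u$ is another such solution with the same constant $c$, the same boundary index (the disk component $z(0,\cdot)$ is unaltered), and the same $\omega$- and $\lambda$-energies.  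Uniqueness then forces $\sigma(\F_{n}^{+})=\F_{n}^{+}$, and hence $\sigma^{k}$ permutes the leaves of $\F_{n}^{+}$ for every $k\in\Z$.

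Now fix $\xi\in D$ and let $F_{\xi}\in\F_{n}^{+}$ be the unique leaf through $(0,0,\xi)$.  By Lemma~\ref{L:nice_form} the intersection $F_{\xi}\cap\Zhat_{n}$ is an embedded closed curve meeting each disk slice $\{\tau\}\times D$ in exactly one point, hence the image of a smooth parameterization $\tau\mapsto(0,\tau,z_{\xi}(\tau))$, $\tau\in\R/n\Z$, normalized so that $z_{\xi}(0)=\xi$; under the disk-map convention $\varphi_{n}(\xi)=z_{\xi}(1)$.  Put $\xi_{k}:=z_{\xi}(k)$ for $k=0,1,\ldots,n$, so that $\xi_{0}=\xi_{n}=\xi$ by $n$-periodicity of $z_{\xi}$.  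Since $\sigma^{k}(0,0,\xi_{k})=(0,k,\xi_{k})\in F_{\xi}$, the leaf through $(0,0,\xi_{k})$ must be $\sigma^{-k}(F_{\xi})\in\F_{n}^{+}$, and its intersection with $\Zhat_{n}$ is the closed curve $\tau\mapsto(0,\tau,z_{\xi}(\tau+k))$.  Consequently $\varphi_{n}(\xi_{k})=z_{\xi}(k+1)=\xi_{k+1}$, and iterating $n$ times yields $\varphi_{n}^{n}(\xi)=\xi_{n}=\xi$, as required.
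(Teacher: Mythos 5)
Your proposal is correct and takes essentially the same approach as the paper: both exploit the deck transformation shifting $\tau$ by one (the paper uses $\T(a,\tau,z)=(a,\tau-1,z)$, you use its inverse), invoke the uniqueness clause of Theorem~\ref{T:existence_and_uniqueness_of_folns_for_pseudorotations} to conclude the foliation is invariant under it, and then use that each leaf's closed intersection curve with $\Zhat_n$ winds once around $\R/n\Z$ before closing up. The paper packages the last step as the identity $\varphi_{n,n-1}\circ\cdots\circ\varphi_{n,0}=\id_D$ together with $\varphi_{n,i}=\varphi_{n}$, whereas you track the partial points $\xi_k=z_\xi(k)$ directly, but these are the same argument.
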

\begin{proof}
We could define $n$ many disk maps using $\F_{n}^{+}$; say $\varphi_{n,i}$ for
$i=0,1,\ldots,n-1$, by requiring that $\varphi_{n,i}$ takes the point $\xi\in D$
to $\xi'\in D$ if $(0,i,\xi)$ and $(0,i+1,\xi')$ lie on the same leaf in $\F_{n}^{+}$.
Since each leaf in the foliation closes up after going once around in the $\R/n\Z$ direction,
it follows that the composition $\varphi_{n,n-1}\circ,\ldots,\circ\varphi_{n,0}$
is the identity map.

We now exploit a symmetry in $\F_{n}^{+}$ to see that each of the maps $\varphi_{n,i}$ is equal
to $\varphi_{n}$.  Consider the $\Z_{n}$ action on $\R\times Z_{n}$ generated by the
deck transformation
\begin{align*}
					\T:\R\times Z_{n}\rightarrow\R\times Z_{n}\\
						\T(a,\tau,z)=(a,\tau-1,z)
\end{align*}
which preserves the almost complex structure; $\T^{*}J_{n}=J_{n}$.
From the uniqueness part of theorem \ref{T:existence_and_uniqueness_of_folns_for_pseudorotations}
we conclude that the foliation $\T(\F_{n}^{+}):=\{\T(F)\,|\,F\in\F_{n}^{+}\}$ is equal to $\F_{n}^{+}$.
Hence $\varphi_{n,i}=\varphi_{n}$ for each $i$.
\end{proof}

Note that $\varphi_{n}(0)=0$ as the cylinder $C_{n}$ passes through the center of
the disk slices $\{0\}\times D$ and $\{1\}\times D$ in $\Zhat_{n}=\{0\}\times Z_{n}$.

\begin{lemma}\label{T:smoothness_of_foliation_maps}
$\varphi_{n}:D\rightarrow D$ is $C^{\infty}$-smooth on $D\backslash\{0\}$.
\end{lemma}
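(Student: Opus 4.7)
The plan is to realize $\varphi_n$ locally near any $\xi_0 \in D \setminus \{0\}$ as the time-one flow of a smooth vector field tangent to the $1$-dimensional foliation that $\F_n^+$ induces on an open subset of the slice $\hat Z_n$.

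First, since $\gamma_n(0) = (0,0)$ (by the normalization in Remark \ref{R:normalizing_periodic_orbit}) and $\xi_0 \neq 0$, the point $\hat p_0 := (0,0,\xi_0)$ does not lie on the cylinder $C_n = \R \times \gamma_n$. Hence the unique leaf $F_{\xi_0} \in \F_n^+$ through $\hat p_0$ is not $C_n$, and being distinct leaves of a foliation the two are disjoint. Therefore the closed curve $\Gamma_{\xi_0} := F_{\xi_0} \cap \hat Z_n$ is disjoint from $C_n \cap \hat Z_n$. By Lemma \ref{L:nice_form}, $F_{\xi_0}$ is parameterized, up to translation, as $\tilde u(s,t) = (s,t,z(s,t))$, so $\Gamma_{\xi_0}$ is the image of $t \mapsto (0,t,z(0,t))$, a smoothly embedded closed curve whose $\tau$-coordinate maps $\R/n\Z$ to itself with degree one.

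Applying the same picture to leaves close to $F_{\xi_0}$, and combining with the \emph{Smooth foliation} property of Theorem \ref{T:existence_and_uniqueness_of_folns_for_pseudorotations} on the complement of $C_n$, I obtain an open neighborhood $\U \subset \hat Z_n$ of $\Gamma_{\xi_0}$ through which the trace of $\F_n^+$ on $\hat Z_n$ is a $C^\infty$-smooth $1$-dimensional foliation by such closed curves. On $\U$ there is then a unique smooth vector field $V$ tangent to this foliation with $d\tau(V) \equiv 1$, since the $\tau$-projection of each leaf curve is a local diffeomorphism. By the construction in Definition \ref{D:approx_map} (as depicted in Figure \ref{F:disk_map}), the time-one flow of $V$ sends $(0,0,\xi)$ to $(0,1,\varphi_n(\xi))$ for all $\xi$ in a neighborhood of $\xi_0$, and smooth dependence of ODE solutions on initial data gives smoothness of $\varphi_n$ near $\xi_0$.

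The main obstacle is the middle step: one must verify that the \emph{Smooth foliation} conclusion of Theorem \ref{T:existence_and_uniqueness_of_folns_for_pseudorotations}, which is a local statement at each point off $C_n$, really does assemble into a globally smooth foliation chart along the entire compact curve $\Gamma_{\xi_0}$. I expect this to be handled by a standard covering argument using the compactness of $\Gamma_{\xi_0}$ together with its disjointness from $C_n$ established above, where transversality of leaves to $\hat Z_n$ at each patch is guaranteed by Lemma \ref{L:nice_form}.
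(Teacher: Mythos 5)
Your argument is correct and relies on the same two ingredients the paper uses: the \emph{Smooth foliation} property of $\F_n^+$ on the complement of $C_n$, and the transversality of leaves to $\Zhat_n$ (and within $\Zhat_n$ to the disk slices) supplied by Lemma \ref{L:nice_form}. The paper's proof is terser — it observes that one has smooth foliation charts about both $(0,0,\xi)$ and $(0,1,\xi')$ (both off $C_n$: note $\varphi_n(\xi)\neq 0$ since $\varphi_n$ is injective and fixes $0$) and asserts that $\varphi_n$ is smooth in those charts; what is tacitly invoked there is that the holonomy of a $C^\infty$-foliation along a plaque between two transversals is $C^\infty$. Your vector-field version makes the transport explicit: the trace of $\F_n^+$ on $\Zhat_n$ is a smooth one-dimensional foliation on a tubular neighborhood of the compact curve $\Gamma_{\xi_0}$, the $\tau$-projection restricted to each leaf curve is a local diffeomorphism (again by Lemma \ref{L:nice_form}), so normalizing by $d\tau(V)\equiv 1$ gives a well-defined smooth $V$, and $\varphi_n$ is locally the time-one flow. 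The concern you flag at the end is not a real gap: a smooth foliation intersected with a transversal hypersurface is a smooth foliation of that hypersurface, and compactness of $\Gamma_{\xi_0}$ (being disjoint from $C_n$, as you note) closes the covering argument. So your proposal is a slightly more worked-out version of the paper's proof rather than a different route — the difference is purely in how the smoothness of holonomy along the leaf is exhibited.
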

\begin{proof}
Recall that by theorem \ref{T:existence_and_uniqueness_of_folns_for_pseudorotations}
$\F_{n}^{+}$ is a $C^{\infty}$-smooth foliation on the complement of the
leaf $C_{n}$.  From lemma \ref{L:nice_form} the leaves of $\F_{n}^{+}$ are
uniformly transverse to the hypersurface $\Zhat_{n}=\{0\}\times Z_{n}$, and within this
to all the disk slices, in particular to $\{0\}\times D$ and $\{1\}\times D$.  Now consider any
point $\xi\in D\backslash\{0\}$.  Then $\xi':=\varphi_{n}(\xi)\in D\backslash\{0\}$.
Therefore there exist local smooth foliation charts about $(0,0,\xi)$ and $(0,1,\xi')$.  In
these charts the map $\varphi_{n}$ from its definition has a smooth expression.
\end{proof}

\begin{lemma}
$\varphi_n:D\rightarrow D$ is continous at the origin.
\end{lemma}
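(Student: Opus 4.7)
My plan is to exploit two facts already established about $\varphi_{n}$: the identity $\varphi_{n}^{n}=\id_{D}$ from the preceding lemma, and the $C^{\infty}$-smoothness on $D\setminus\{0\}$ from lemma \ref{T:smoothness_of_foliation_maps}. Together with $\varphi_{n}(0)=0$, continuity at the origin then follows by a purely dynamical argument; no further pseudoholomorphic curve analysis is required beyond what is already contained in those two inputs.

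I argue by contradiction. Suppose $\xi_{k}\to 0$ in $D$ but $\eta_{k}:=\varphi_{n}(\xi_{k})$ fails to converge to $0$. By compactness of $D$, after passing to a subsequence we may assume $\eta_{k}\to\eta$ for some $\eta\neq 0$. Since $\varphi_{n}^{n}=\id_{D}$, the map $\varphi_{n}$ is a bijection of $D$ with inverse $\varphi_{n}^{n-1}$; combined with $\varphi_{n}(0)=0$, this forces $\varphi_{n}$ to restrict to a bijection of $D\setminus\{0\}$, so that every iterate $\varphi_{n}^{i}(\xi)$, $0\leq i\leq n-1$, stays in $D\setminus\{0\}$ whenever $\xi\neq 0$. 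Composing the smoothness of $\varphi_{n}$ on $D\setminus\{0\}$ with itself, $\varphi_{n}^{n-1}$ is continuous at every point of $D\setminus\{0\}$, and therefore at $\eta$. Consequently
\[
\varphi_{n}^{n-1}(\eta_{k})\longrightarrow\varphi_{n}^{n-1}(\eta)\qquad\text{as }k\to\infty,
\]
whereas
\[
\varphi_{n}^{n-1}(\eta_{k})=\varphi_{n}^{n-1}(\varphi_{n}(\xi_{k}))=\varphi_{n}^{n}(\xi_{k})=\xi_{k}\longrightarrow 0.
\]
Hence $\varphi_{n}^{n-1}(\eta)=0$, and since $\varphi_{n}^{n-1}$ is a bijection with $\varphi_{n}^{n-1}(0)=0$ this forces $\eta=0$, contradicting the choice $\eta\neq 0$.

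There is no substantive obstacle: the only observation beyond the two cited lemmas is that the forward $\varphi_{n}$-orbit of a nonzero point avoids the origin, which is immediate from bijectivity and $\varphi_{n}(0)=0$.
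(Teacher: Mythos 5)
Your argument is correct, and it is genuinely different from the one the paper gives. The paper's proof invokes the compactness result of lemma~\ref{L:compactness_of_each_foliation}: given $\xi_j\to 0$, one chooses parameterizations of the leaves through $(0,0,\xi_j)$, shows via positivity of intersections that they converge in $C^\infty_{\loc}$ to the orbit cylinder $C_n$, and reads off $\varphi_n(\xi_j)\to 0$ from the limit. Your proof instead makes no further appeal to the pseudoholomorphic curve machinery at all: it uses only the identity $\varphi_n^n=\id_D$ from the preceding lemma, the $C^\infty$-smoothness of $\varphi_n$ on $D\setminus\{0\}$ from lemma~\ref{T:smoothness_of_foliation_maps}, and the elementary observation that a finite-order bijection fixing $0$ must preserve $D\setminus\{0\}$. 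The contradiction you derive --- that $\varphi_n^{n-1}(\eta)=0$ with $\eta\neq 0$ --- is clean once you note that $\varphi_n^{n-1}$ restricted to $D\setminus\{0\}$ is continuous (a composition of continuous self-maps of $D\setminus\{0\}$) and is a bijection with $\varphi_n^{n-1}(0)=0$. The one point worth being explicit about, which you do handle, is that continuity of $\varphi_n^{n-1}$ at $\eta$ requires every intermediate iterate $\varphi_n^i(\eta)$, $0\le i\le n-2$, to avoid the origin; this is exactly what the bijectivity argument gives. Your route is more elementary in that it sits entirely at the level of the disk maps once the two cited lemmas are granted, whereas the paper's route is essentially a local bubbling/compactness argument internal to the foliation; the latter is more robust (it would work even without the finite-order property), but for establishing continuity at the origin alone your argument is shorter and self-contained.
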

\begin{proof}
Let $\xi_j\in D\backslash\{0\}$ be a sequence of points converging to $0\in D$, and
$p_j:=(0,0,\xi_j)\in\R\times Z_n$.  By lemma \ref{L:compactness_of_each_foliation} there
exists a sequence of parameterizations $\u_j$ of the unique leaf $F_j\in\F_n^+$ containing $p_j$
that converges in the $C^\infty_{\loc}$-topology to $C_n$.  By lemma \ref{L:nice_form}
we may choose each $\u_j$ to take the form $\u_j:[S_j,\infty)\times\R/n\Z\rightarrow\R\times Z_n$
for some $S_j\in\R$, with
\[
			  \u_j(s,t)=(s,t,z_j(s,t))
\]
for some sequence $z_j$.  It follows that the sequence of points
\[
		    F_j\cap\{(0,1)\}\times D=(0,1,z_j(0,1))
\]
converges to $(0,1,0)$.  Which means that $\varphi_n(\xi_j)=z_j(0,1)\rightarrow 0$ as
$j\rightarrow\infty$.
\end{proof}

Fix a subsequence $\{n_j\}_{j\in\N}$ for which the sequence of fractional parts
\begin{align}\label{E:subsequence}
		    \lim_{j\rightarrow\infty}\{n_j\alpha\}=0.
\end{align}
From lemma (\ref{L:formula_1}) this implies that the $\omega$-energies of the
leaves in the foliations $\F_{n_j}^+$ tends to zero, as $j\rightarrow\infty$, uniformly over all leaves.

For maps $f,g:D\rightarrow D$, define $d_{C^0}(f,g)$ using the linear structure and Euclidean norm on
$\R^2$ by
\[
      d_{C^0}(f,g):=\sup_{\xi\in D}|f(\xi)-g(\xi)|.
\]
\begin{proposition}\label{T:uniform_convergence}
The subsequence $\varphi_{n_j}$ converges to the
pseudo-rotation $\varphi$ in the following sense:
\begin{align*}
	d_{C^0}(\varphi_{n_j},\varphi)+d_{C^0}(\varphi_{n_j}^{-1},\varphi^{-1})\rightarrow0
\end{align*}
as $j\rightarrow\infty$.
\end{proposition}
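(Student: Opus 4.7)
The plan is to argue by contradiction and exploit that, along the subsequence $\{n_j\}$, the $\omega$-energy of \emph{every} leaf in $\F_{n_j}^+$ tends to zero by lemma \ref{L:formula_1}. A Floer cylinder of vanishing $\omega$-energy must be a true Hamiltonian trajectory, so the intersections of the leaves with $\{0\}\times Z_{n_j}$ should, in the limit, be indistinguishable from the graph of $\varphi$.

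I would carry this out as follows. Suppose the conclusion fails for $\varphi_{n_j}\to\varphi$; the argument for $\varphi_{n_j}^{-1}$ is symmetric (one uses that the same leaf through $(0,0,\xi)$ also passes through $(0,-1,\varphi_{n_j}^{-1}(\xi))$, since leaves wrap once around the $\R/n_j\Z$-factor). Then there exist $\epsilon>0$ and points $\xi_j\in D$ with $|\varphi_{n_j}(\xi_j)-\varphi(\xi_j)|\geq\epsilon$; by compactness extract a subsequence with $\xi_j\to\xi^{*}\in D$. Let $F_j\in\F_{n_j}^+$ be the unique leaf through $(0,0,\xi_j)$. By lemma \ref{L:nice_form} parameterize $F_j$ in the graphical form $(s,t,z_j(s,t))$ on $[-s_{0,j},\infty)\times\R/n_j\Z$, where $z_j$ solves the Floer equation $\partial_s z_j+i(\partial_t z_j-X_H(t,z_j))=0$, satisfies $z_j(0,0)=\xi_j$ and $z_j(0,1)=\varphi_{n_j}(\xi_j)$, and has Floer energy equal to $E_\omega(F_j)=\{n_j\alpha\}\pi\to 0$.

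The main work is a Floer-compactness step. Since the target $D$ is compact and the equation is elliptic, the $z_j$ enjoy uniform $C^k$ bounds on a fixed $(s,t)$-window around $\{0\}\times[0,1]$ (lifting $t$ to $\R$); in the possibly-degenerate case $s_{0,j}\to 0$ one has $\xi_j\to\partial D$, and $C^k$-bounds up to the totally real boundary $z_j(-s_{0,j},\cdot)\in\partial D$ suffice. After passing to a subsequence, $z_j\to z$ in $C^\infty_{\loc}$, with $z$ a Floer solution satisfying $E_\omega(z)=0$. Vanishing Floer energy forces $\partial_s z\equiv 0$ and $\partial_t z=X_H(t,z)$, so $z(s,t)=\phi^t(z(0,0))=\phi^t(\xi^{*})$. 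In particular $z(0,1)=\varphi(\xi^{*})$, giving $\varphi_{n_j}(\xi_j)=z_j(0,1)\to\varphi(\xi^{*})$. Since $\varphi$ is continuous $\varphi(\xi_j)\to\varphi(\xi^{*})$, so $|\varphi_{n_j}(\xi_j)-\varphi(\xi_j)|\to 0$, contradicting the choice $\geq\epsilon$.

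The step I expect to be the main obstacle is the Floer-compactness extraction, specifically ensuring uniform $C^\infty_{\loc}$-control on the $z_j$ near the evaluation slice when $s_{0,j}\to 0$, where the totally real boundary $\partial D$ collapses onto the slice at which we evaluate. The cleanest remedy is to work at the level of leaves rather than parameterizations: invoke the foliation compactness already used in the proof of continuity at the origin to extract a $C^\infty_{\loc}$-limit leaf (which has $\omega$-energy zero, hence is an orbit cylinder $\R\times\{\phi^t(\xi^{*})\}_{t\in\R/n_j\Z}$ or a limit thereof), and then reparameterize via lemma \ref{L:nice_form} to read off that $z(0,1)=\varphi(\xi^{*})$. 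A secondary point worth checking carefully is that $\xi^{*}=0$ does not cause trouble: in that case the limit leaf is the cylinder $C_{n_j}$, and the same argument applied via the parameterization converging to $C_{n_j}$ (as already done in the continuity at the origin lemma) gives $\varphi_{n_j}(\xi_j)\to 0=\varphi(0)=\varphi(\xi^{*})$.
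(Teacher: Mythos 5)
Your overall strategy reproduces the paper's proof: argue by contradiction, put the leaves in the graphical Floer form via lemma \ref{L:nice_form}, extract a $C^\infty_{\loc}$-limit, use $E_\omega\to 0$ to force the limit to be a Hamiltonian trajectory, and read off the contradiction at $(s,t)=(0,1)$. The paper's proof is exactly this, with the compactness step packaged as theorem \ref{T:compact_sequences}.

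The compactness step is where your argument has a real gap. You write that ``since the target $D$ is compact and the equation is elliptic, the $z_j$ enjoy uniform $C^k$ bounds on a fixed $(s,t)$-window''. That is not a consequence of ellipticity plus compact target alone: the missing ingredient is a uniform gradient ($k=1$) bound, which is never free for a nonlinear elliptic equation and requires a bubbling/rescaling analysis. Worse, your proposed fix --- invoking ``the foliation compactness already used in the proof of continuity at the origin'' --- cannot work here. Lemma \ref{L:compactness_of_each_foliation} is stated and proved for fixed $n$, and its proof explicitly rests on the $\lambda$-energy being uniformly bounded along the sequence. In your situation $E_\lambda(\u_j)=n_j\to\infty$, so the total energy is unbounded and that lemma does not apply. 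This is precisely what forces the separate ``compactness as $n\to\infty$'' subsection of section \ref{S:compactness}: proposition \ref{T:uniform_grad_bounds} obtains the uniform gradient bound by a rescaling argument that crucially exploits $\lim E_\omega(\u_n)=0$ (the bubble would be a $J_\infty$-holomorphic plane or half-plane with constant $a,\tau$ components and vanishing $\omega$-energy, hence constant by Liouville-type reasoning), and corollary \ref{C:uniform_global_bds_in_Cinfty} then bootstraps to uniform $C^k$ bounds. Theorem \ref{T:compact_sequences} is the clean statement you actually need to cite; it also handles your worry about $\xi^*=0$ automatically, so no separate case analysis is required.

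A minor remark: for the $\varphi_{n_j}^{-1}$ part you gesture at symmetry ``(the same leaf also passes through $(0,-1,\varphi_{n_j}^{-1}(\xi))$)''; that is correct and is also how the paper handles it, by noting ``the same argument will work for the inverses.''
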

\begin{proof}
We show that $d_{C^0}(\varphi_{n_j},\varphi)\rightarrow0$ as $j\rightarrow\infty$, as the same
argument will work for the inverses.

Arguing indirectly, there exists a sequence of points $\xi_{j}\in D$ and
$\delta>0$ such that $|\varphi_{n_{j}}(\xi_{j})-\varphi(\xi_{j})|\geq\delta$
for all $j\in\N$, where $|\cdot|$ is the Euclidean norm on $\R^2$. Restricting to a
subsequence we may assume that $\xi_{j}\rightarrow \xi$ for some $\xi\in D$, and
$\delta\leq|\varphi_{n_{j}}(\xi_{j})-\varphi(\xi_{j})|\leq|\varphi_{n_{j}}(\xi_{j})-\varphi(\xi)|+
|\varphi(\xi_{j})-\varphi(\xi)|$ for all $j\in\N$.
Therefore as $\varphi$ is continous,
\begin{equation}\label{E:487}
		\half\delta\leq|\varphi_{n_{j}}(\xi_{j})-\varphi(\xi)|
\end{equation}
for all $j$ sufficiently large.

For each $j\in\N$ let $F_{j}\in \F_{n_{j}}^+$ be the unique leaf containing the
point $(0,0,\xi_{j})\in\R\times\R/n_{j}\Z\times D$.  Let us assume that each $F_j$ is a half cylinder,
otherwise the argument is even easier.  There exists a solution $\u_j$ to (\ref{E:hol_curve_equation_1})
parameterizing $F_j$.  After a holomorphic reparameterization we may assume that
\begin{align*}
			\u_{j}:[S_{j},\infty)\times\R/n_{j}\Z &\rightarrow\R\times Z_{n_{j}} \\
					\u_{j}(0,0)= & (0,0,\xi_{j})
\end{align*}
for some $S_{j}\leq0$.  For each $j$, $E_{\lambda}(\u_{j})<\infty$, so by lemma
\ref{L:nice_form}, $\u_j$ takes the form
\begin{equation}\label{E:form_of_converging_curves}
 			\u_j(s,t)=(s,t,z_j(s,t))
\end{equation}
some $z_j:[S_{j},\infty)\times\R/n_{j}\Z\rightarrow D$.
Moreover, the sequence $\{\u_j\}_{j\in\N}$ satisfies all the criterion for
the compactness result theorem \ref{T:compact_sequences}.  In particular
\begin{align*}
		\lim_{j\rightarrow\infty}E_{\omega}(\u_j)=0
\end{align*}
due to our choice of subsequence satisfying (\ref{E:subsequence}).  We conclude that
the sequence $\u_j$ converges in the following sense: for each $j$ let
$\ubar_j:[S_j,\infty)\times\R\rightarrow\R\times Z_{\infty}$ be the unique lift of $\u_j$ to
the universal covering, satisfying
\begin{equation}\label{E:origin_to_basepoints}
 			\ubar_j(0,0)=(0,0,\xi_{j}).
\end{equation}
After restricting to a further subseqence we can assume that $\ubar_j\rightarrow\ubar_\infty$
in the $C^\infty_{\loc}(\R^2,\R\times Z_\infty)$ topology, that is, uniformly on compact sets,
where $\ubar_\infty$ takes the form
\[
		      \ubar_\infty(s,t)=(s,t,\gamma(t))
\]
for some $\gamma\in C^\infty(\R,D)$ solving $\gammadot(t)=X_{H}(t,\gamma(t))$ for all $t\in\R$.
From (\ref{E:origin_to_basepoints}) we have $\ubar_\infty(0,0)=(0,0,\xi)$.   We conclude that
for all $j$,
\begin{align*}
 	  (0,1,\varphi_{n_j}(\xi_j))&=\ubar_j(0,1).
\intertext{The right hand side converges to }
		  \ubar_\infty(0,1)&=(0,1,\gamma(1))\\
			    &=(0,1,\varphi(\gamma(0)))\\
			    &=(0,1,\varphi(\xi)).
\end{align*}
This contradicts (\ref{E:487}) and we are done.
\end{proof}

Combining the results of this section we have proven the following statement which is almost
theorem \ref{T:main_result_intro}.

\begin{theorem}\label{T:main_result}
Suppose $\varphi\in\Diff^\infty(D,\omega_0)$ fixes the origin and has no other periodic points.
Then there exists a sequence of maps $\varphi_j\in\Homeo_+(D)\cap\Diff^\infty(D\backslash\{0\})$
over $j\in\N$, with the following properties.  For each $j\in\N$, $\varphi_j(0)=0$,
there exists $n_j\in\N$ such that $\varphi_j^{n_j}=\id_D$, and
$d_{C^0}(\varphi_j,\varphi)+d_{C^0}(\varphi_j^{-1},\varphi^{-1})\rightarrow 0$ as $j\rightarrow\infty$.
\end{theorem}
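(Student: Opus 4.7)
The plan is to assemble this theorem directly from the results already established in Section \ref{S:pf_main_result}, with essentially no new content beyond packaging. Fix a generating Hamiltonian $H$ as in Section \ref{S:pseudo-rotations}, so that $\alpha:=\Rot(\varphi;H)\in\R$ is irrational. Theorem \ref{T:existence_and_uniqueness_of_folns_for_pseudorotations} then produces, for every $n\in\N$, the finite energy foliation $\F_n^+$ of $(\R\times Z_n, J_n)$, and Definition \ref{D:approx_map} assigns to each such foliation a disk map $\varphi_n:D\rightarrow D$. I would choose once and for all a subsequence $\{n_j\}_{j\in\N}$ with $\{n_j\alpha\}\rightarrow 0$ (which exists because $\alpha$ is irrational and the fractional parts $\{n\alpha\}$ are equidistributed in $[0,1)$), and set $\varphi_j:=\varphi_{n_j}$.

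The properties listed in the theorem then come one by one from the lemmas already proved. Fixing the origin, $\varphi_j(0)=0$, is the observation following the symmetry lemma, using that the cylinder leaf $C_{n_j}$ passes through $(0,0,0)$ and $(0,1,0)$. Periodicity $\varphi_j^{n_j}=\id_D$ is exactly the first lemma after Definition \ref{D:approx_map}, obtained from the $\Z_{n_j}$-symmetry of $\F_{n_j}^+$ induced by the deck transformation $\T$. Smoothness on $D\setminus\{0\}$ is Lemma \ref{T:smoothness_of_foliation_maps}, and continuity at the origin follows from the compactness lemma that degenerates nearby leaves onto $C_{n_j}$. Combined, these give $\varphi_j\in C^0(D,D)\cap C^\infty(D\setminus\{0\},D\setminus\{0\})$ fixing $0$. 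To upgrade $\varphi_j$ to an element of $\Homeo_+(D)$, observe that since $\varphi_j^{n_j}=\id_D$, the map is a bijection with continuous inverse $\varphi_j^{n_j-1}$, so $\varphi_j$ is a homeomorphism of $D$; orientation preservation follows because $\varphi_j$ is a smooth orientation-preserving diffeomorphism of $D\setminus\{0\}$ (the leaves of $\F_{n_j}^+$ are transverse to $\Zhat_{n_j}$ with a consistent co-orientation coming from $R_{n_j}$, which gives a common orientation convention for the maps $\varphi_{n_{j},i}$), and this extends across the fixed point $0$.

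Finally, $C^0$-convergence of both $\varphi_j$ and the inverses to $\varphi$ and $\varphi^{-1}$ respectively is the content of Proposition \ref{T:uniform_convergence}, which was proved using Lemma \ref{L:formula_1} and the compactness theorem for sequences of finite energy half cylinders whose $\omega$-energies tend to zero.

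The only step that is not a direct citation is the verification that $\varphi_j$ is an orientation-preserving homeomorphism of all of $D$; this is mostly bookkeeping, but the one place one has to be slightly careful is checking that the local foliation charts used in Lemma \ref{T:smoothness_of_foliation_maps} can be chosen to induce a coherent orientation on the transverse disk slices $\{0\}\times D$ and $\{1\}\times D$, so that $\varphi_j$ is orientation preserving where it is smooth. Once that is in hand the theorem is immediate, and Theorem \ref{T:main_result_intro} follows after one last step (carried out later in the paper) identifying the periodic maps $\varphi_j$ up to conjugation with rigid rotations $R_{2\pi p_j/n_j}$, which uses that a periodic orientation-preserving homeomorphism of $D$ fixing the origin is topologically conjugate to a rational rotation.
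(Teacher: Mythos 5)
Your proof is correct and takes essentially the same approach as the paper, which states Theorem \ref{T:main_result} with only the remark that it follows by ``combining the results of this section.'' Your one genuine addition is the explicit verification that $\varphi_j\in\Homeo_+(D)$: the paper leaves this tacit, and your observations—that $\varphi_j^{-1}=\varphi_j^{n_j-1}$ is continuous so $\varphi_j$ is a homeomorphism, and that orientation preservation follows from the consistent co-orientation of the transverse disk slices (or, even more simply, from the fact that a homeomorphism $C^0$-close to the orientation-preserving $\varphi$ must itself be orientation preserving for $j$ large)—fill that small gap cleanly.
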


There are presumably nicer ways to go from this conclusion to the final statement.  For example using 
changes of coordinates from the pseudoholomorphic curves themselves.  This will presumably follow from 
a more serious analysis of the asymptotic properties of the curves.  

\begin{proof}[Proof of theorem \ref{T:main_result_intro}]
It is a classical result \cite{Brouwer,Eilenberg,Kere} that if $f\in\Homeo^+(D)$ satisfies $f^n=\id_D$
for some $n\in\N$, then there exists $g\in\Homeo^+(D)$ and $q\in\{0,1,\ldots,n-1\}$ so that
\[
		      g\circ f\circ g^{-1}=R_{2\pi q/n}.
\]
If $q\neq0$ then $g$ must fix the origin, and if $q=0$ then $f=\id_D$ anyway, so we may assume
$g$ fixes the origin.  
Applying this to each $\varphi_{j}$ we find $g_j\in\Homeo^+(D)$, fixing the origin, and
$p_j\in\Z$ such that
\[
		  \varphi_{j}=g_j^{-1}\circ R_{2\pi p_j/n_j}\circ g_j.
\]
Now we replace $g_j$ by a $C^0$-close smooth approximation.  More precisely, let $\ghat_j$ be a sequence
in $\Diff^\infty(D)$, each fixing the origin, with
$d_{C^0}(\ghat_j,g_j)+d_{C^0}(\ghat_j^{-1},g_j^{-1})\rightarrow 0$ as $j\rightarrow\infty$.
Then the maps $\varphihat_j:=\ghat_j^{-1}\circ R_{2\pi p_j/n_j}\circ\ghat_j$ are
$C^\infty$-diffeomorphisms which converge in the $C^0$-sense to the irrational pseudo-rotation $\varphi$.
The maps $\varphihat_j$ satisfy the conditions of theorem \ref{T:main_result_intro}.
\end{proof}

\section{Calculation of the $\omega$-energy}\label{S:omega_energy}
Choose a $1$-form $\lambda_0$ on the disk so that $d\lambda_0=\omega_0=dx\wedge dy$.  For each $n\in\N$ define
the action functional $\Act_n:C^\infty(\R/n\Z,Z_n)\rightarrow\R$ (associated to $\lambda_0$) by
\begin{equation}\label{E:defn_of_Action_n}
\begin{aligned}
			\Act_n(\sigma):=\int_{\R/n\Z}\sigma^*\lambda_0 - \int_0^nH(\sigma(t))dt.
\end{aligned}
\end{equation}
We may rewrite this as
\begin{align}\label{E:defn_of_Action_n_1}
		\Act_n(\sigma):=\int_{\R/n\Z}\sigma^*\eta_n
\end{align}
where $\eta_n:=\lambda_0 - H d\tau$ is a primitive of $\omega_n$ the $2$-form used to define the
$\omega$-energy.  Note that $\eta_n$ restricts to a closed
$1$-form on $\partial Z_n$ since $R_n$ is tangent to $\partial Z_n$ and
$d\eta_n(R_n,\cdot)=\omega_n(R_n,\cdot)=0$.
Hence $\Act_n$ restricted to $C^\infty(\R/n\Z,\partial Z_n)$
descends to a map on homology. 

\begin{lemma}\label{L:action_inequalities_on_gamma_n}
For each $n\in\N$
\begin{equation}\label{E:action_inequalities_on_gamma_n}
 \Act_n(1_{\R/n\Z}) + \lfloor n\alpha\rfloor\Act_n(1_{\partial D})\leq \Act_n(\gamma_n)
			      \leq \Act_n(1_{\R/n\Z}) + \lceil n\alpha\rceil\Act_n(1_{\partial D}).
\end{equation}
\end{lemma}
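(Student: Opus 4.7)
The plan is to derive both inequalities as the non-negativity of the $\omega$-energy of pseudoholomorphic half-cylinder leaves from theorem \ref{T:existence_and_uniqueness_of_folns_for_pseudorotations}, through Stokes' theorem applied to the primitive $\eta_n = \lambda_0 - H d\tau$ of $\omega_n$.

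Before invoking the curves, I would first verify that $\omega_n|_{\partial Z_n} = 0$: the term $\omega_0|_{\partial D}$ vanishes as a 2-form restricted to a 1-manifold, and $d\tau \wedge dH|_{\partial Z_n}$ vanishes because $H_t|_{\partial D}$ is constant for each $t$. Consequently $\eta_n|_{\partial Z_n}$ is closed, so $\Act_n$ descends on $\partial Z_n$ to a homomorphism from $H_1(\partial Z_n) \cong \Z[1_{\R/n\Z}] \oplus \Z[1_{\partial D}]$. In particular, any loop on $\partial Z_n$ of $\tau$-degree $1$ and $\partial D$-degree $k$ has action $\Act_n(1_{\R/n\Z}) + k\Act_n(1_{\partial D})$.

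For the lower bound, pick any non-cylinder leaf $F \in \F_n^+$, which exists because $\F_n^+$ is a foliation of the whole 4-manifold. By theorem \ref{T:existence_and_uniqueness_of_folns_for_pseudorotations} and lemma \ref{L:nice_form}, after a translation $F$ is parametrized by $\u(s,t) = (s, t, z(s,t))$ on $\R^+\times\R/n\Z$, asymptotic to $\gamma_n$ as $s \to +\infty$ and with boundary loop $\u(0,\cdot) \subset L_c \subset \partial Z_n$ of $\tau$-degree $1$ and boundary index $\lfloor n\alpha\rfloor$. Applying Stokes on $[0, S]\times\R/n\Z$ and letting $S \to \infty$, the boundary contribution at $s=S$ converges to $\Act_n(\gamma_n)$ by the exponential convergence $\u(S,\cdot) \to \gamma_n$ in $C^\infty$, yielding
\[
    E_\omega(F) = \Act_n(\gamma_n) - \Act_n\bigl(\u(0,\cdot)\bigr) = \Act_n(\gamma_n) - \Act_n(1_{\R/n\Z}) - \lfloor n\alpha\rfloor \Act_n(1_{\partial D}).
\]
The lower inequality follows from $E_\omega(F) \geq 0$ for $J_n$-holomorphic curves. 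The upper inequality is obtained identically using a non-cylinder $F \in \F_n^-$: now $\u$ is defined on $\R^-\times\R/n\Z$, asymptotic at $s\to-\infty$ with boundary index $\lceil n\alpha\rceil$, and the reversed orientation of the asymptotic end in Stokes produces $E_\omega(F) = \Act_n(1_{\R/n\Z}) + \lceil n\alpha\rceil \Act_n(1_{\partial D}) - \Act_n(\gamma_n) \geq 0$.

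The main technical point throughout is convergence of the boundary integral $\int_{\{s\}\times\R/n\Z}\u^*\eta_n \to \Act_n(\gamma_n)$ as $s \to \pm\infty$, which rests on exponential asymptotic estimates for finite-energy pseudoholomorphic half-cylinders approaching a non-degenerate periodic orbit — standard in symplectic field theory and explicitly noted in the remark following theorem \ref{T:existence_and_uniqueness_of_folns_for_pseudorotations}.
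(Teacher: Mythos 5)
Your proof is correct and follows essentially the same approach as the paper: you apply Stokes' theorem to half-cylinder leaves of $\F_n^\pm$ using the primitive $\eta_n$, then combine non-negativity of the $\omega$-energy with the homological invariance of $\Act_n$ on $\partial Z_n$ and the prescribed boundary indices $\lfloor n\alpha\rfloor$, $\lceil n\alpha\rceil$. The only difference is that you spell out a couple of points the paper leaves implicit (the verification that $\omega_n$ vanishes on $\partial Z_n$, and the exponential asymptotic convergence justifying the boundary term at infinity).
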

\begin{proof}
Let $\u_\pm=(a_\pm,u_\pm):\R^\pm\times\R/n\Z\rightarrow\R\times Z_n$ be parameterizations
of leaves $F^\pm\in\F_n^\pm$ respectively which satisfy (\ref{E:hol_curve_equation_1}).
In either case $u_\pm(s,\cdot)$ converges uniformly in $C^\infty(\R/n\Z,Z_n)$ to a parameterization
$\gamma_n(\textup{const}_\pm+\cdot)$ as $s\rightarrow\pm\infty$ respectively.  Applying Stokes theorem,
\begin{align*}
	E_\omega(\u_+)=\int_{\R^+\times\R/n\Z}u_+^*\omega_n
		      =\Act_n(\gamma_n) - \Act_n(u_+(0,\cdot)),
\end{align*}
and
\begin{align*}
	E_\omega(\u_-)=\int_{\R^-\times\R/n\Z}u_-^*\omega_n
		      =\Act_n(u_-(0,\cdot))-\Act_n(\gamma_n).
\end{align*}
Therefore, as the energies are non-negative,
\[
			 \Act_n(u_+(0,\cdot))\leq \Act_n(\gamma_n)\leq \Act_n(u_-(0,\cdot)).
\]
We observed that the action $\Act_n$ of a closed loop in $\partial Z_n$ depends only on its
homology class.  From theorem \ref{T:existence_and_uniqueness_of_folns_for_pseudorotations}
$u_+(0,\cdot):\R/n\Z\rightarrow\R/n\Z\times\partial D$ and
$u_-(0,\cdot):\R/n\Z\rightarrow\R/n\Z\times\partial D$
are homologous to $t\mapsto(t,e^{2\pi i(\lfloor n\alpha\rfloor/n)t})$ and
$t\mapsto(t,e^{2\pi i(\lceil n\alpha\rceil)t})$ respectively.  Therefore we get the inqualities
in (\ref{E:action_inequalities_on_gamma_n}).
\end{proof}

\begin{corollary}\label{L:action_of_gamma_1}
The unique $1$-periodic orbit $\gamma_1:\R/\Z\rightarrow Z_1$ has action
\begin{equation}\label{E:action_of_gamma_1}
	  \Act_1(\gamma_1)=\Act_1(1_{\R/\Z}) + \alpha\Act_1(1_{\partial D})
\end{equation}
where $1_{\R/\Z}$ and $1_{\partial D}$ are the closed loops in $\partial Z=\R/\Z\times\partial D$
given by $t\mapsto(t,\textup{pt})$ and $t\mapsto(\textup{pt},t)$ respectively.
\end{corollary}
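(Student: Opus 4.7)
The plan is to derive the formula from Lemma \ref{L:action_inequalities_on_gamma_n} by letting $n\to\infty$ and using that both $\lfloor n\alpha\rfloor/n$ and $\lceil n\alpha\rceil/n$ converge to $\alpha$.

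First I would establish the scaling behavior of the action under passing from $Z_1$ to the $n$-fold cover $Z_n$. Recall that by our normalization $\gamma_n(t)=(t,0)$ for $t\in\R/n\Z$, so $\gamma_n$ is literally the $n$-fold lift of $\gamma_1$ along the cover $\pi_n:Z_n\to Z_1$. The primitive $\eta_n=\lambda_0-Hd\tau$ is likewise the pullback of $\eta_1$ under $\pi_n$. Consequently
\[
\Act_n(\gamma_n)=\int_{\R/n\Z}\gamma_n^*\eta_n = n\int_{\R/\Z}\gamma_1^*\eta_1 = n\,\Act_1(\gamma_1).
\]
Similarly, $1_{\R/n\Z}$ is by definition $t\mapsto(t,\mathrm{pt})$ on $t\in\R/n\Z$, which is the $n$-fold lift of $1_{\R/\Z}$, giving $\Act_n(1_{\R/n\Z})=n\,\Act_1(1_{\R/\Z})$. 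On the other hand the loop $1_{\partial D}$ lies inside a single disk slice $\{\tau_0\}\times\partial D$, so $d\tau$ vanishes on it and the integral reduces to $\int_{\partial D}\lambda_0$, independent of $n$; hence $\Act_n(1_{\partial D})=\Act_1(1_{\partial D})$. Note that by Stokes $\Act_1(1_{\partial D})=\int_D dx\wedge dy=\pi>0$, which will be needed when passing to the limit.

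Substituting these identities into the inequalities of Lemma \ref{L:action_inequalities_on_gamma_n} and dividing through by $n$ gives
\[
\Act_1(1_{\R/\Z})+\frac{\lfloor n\alpha\rfloor}{n}\Act_1(1_{\partial D})\leq \Act_1(\gamma_1)\leq \Act_1(1_{\R/\Z})+\frac{\lceil n\alpha\rceil}{n}\Act_1(1_{\partial D})
\]
for every $n\in\N$. Since $\lfloor n\alpha\rfloor/n\to\alpha$ and $\lceil n\alpha\rceil/n\to\alpha$ as $n\to\infty$, the two bounds both converge to $\Act_1(1_{\R/\Z})+\alpha\,\Act_1(1_{\partial D})$, yielding the claimed equality.

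There is no serious obstacle here: the whole argument is an application of Lemma \ref{L:action_inequalities_on_gamma_n} combined with the naturality of the action under the covering $\pi_n:Z_n\to Z_1$. The only point requiring mild care is the bookkeeping that $\gamma_n$ and $1_{\R/n\Z}$ scale by a factor $n$ under passage to the cover while $1_{\partial D}$ does not, which follows immediately from our normalization $\gamma_n(t)=(t,0)$ and from the fact that $1_{\partial D}$ lies in a single fiber of $\pi_n$.
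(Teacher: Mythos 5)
Your proposal is correct and follows exactly the same route as the paper's own proof: establish the scaling relations $\Act_n(\gamma_n)=n\Act_1(\gamma_1)$, $\Act_n(1_{\R/n\Z})=n\Act_1(1_{\R/\Z})$, $\Act_n(1_{\partial D})=\Act_1(1_{\partial D})$, substitute into Lemma \ref{L:action_inequalities_on_gamma_n}, divide by $n$, and let $n\to\infty$. The only cosmetic difference is that you spell out the covering-map interpretation of the scaling identities and note $\Act_1(1_{\partial D})=\pi>0$, whereas the paper states the identities as immediate from the definition; in fact the positivity is not needed for the limit step since both bounds converge to the same value.
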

\begin{proof}
From the definition of $\Act_n$,
\begin{align*}
	      \Act_n(\gamma_n)&=n\Act_1(\gamma_1)\\
	      \Act_n(1_{\R/n\Z})&=n\Act_1(1_{\R/\Z})\\
	      \Act_n(1_{\partial D})&=\Act_1(1_{\partial D})
\end{align*}
for all $n\in\N$.  Substituting these into the inequalities in lemma
(\ref{L:action_inequalities_on_gamma_n}), and dividing through by $n$ and letting
$n\rightarrow+\infty$ gives (\ref{E:action_of_gamma_1}).
\end{proof}

\begin{lemma}\label{P:omega_energy_of_half_cylinders_in_F_n}
Let $n\in\N$.  Every leaf $F\in\F_n^+$ with boundary has
$\omega$-energy
\[
		  E_{\omega}(F)=\{n\alpha\}\pi
\]
where $\{\cdot\}$ applied to any real number denotes its fractional part.
\end{lemma}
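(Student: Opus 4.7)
The plan is to apply Stokes' theorem to a parameterization of $F$ on $\R^+\times\R/n\Z$, combine the resulting action-difference formula with the homology class of the boundary loop (dictated by the boundary index $\lfloor n\alpha\rfloor$), and finally invoke the $n=1$ identity for $\Act_1(\gamma_1)$ from Corollary \ref{L:action_of_gamma_1} to eliminate $\Act_n(\gamma_n)$.

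Concretely, let $\u=(a,u)$ be a solution to (\ref{E:hol_curve_equation_1}) parameterizing $F$; by the remark following Theorem \ref{T:existence_and_uniqueness_of_folns_for_pseudorotations} the loop $u(s,\cdot)$ converges in $C^\infty$ to a shifted parameterization of $\gamma_n$ as $s\to+\infty$. Exactly as in the proof of Lemma \ref{L:action_inequalities_on_gamma_n}, Stokes' theorem applied to $\omega_n=d\eta_n$ gives
\[
E_\omega(F)\;=\;\int_{\R^+\times\R/n\Z}u^*\omega_n\;=\;\Act_n(\gamma_n)\,-\,\Act_n(u(0,\cdot)).
\]
Next, I would use that $\eta_n$ is closed on $\partial Z_n$, so the action on $C^\infty(\R/n\Z,\partial Z_n)$ descends to $H_1(\partial Z_n)\cong \Z\langle 1_{\R/n\Z}\rangle\oplus\Z\langle 1_{\partial D}\rangle$. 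Since $F\in\F_n^+$ has boundary index $\lfloor n\alpha\rfloor$ and the $\tau$-component has degree $+1$ by (\ref{E:hol_curve_equation_1}), the loop $u(0,\cdot):\R/n\Z\rightarrow\R/n\Z\times\partial D$ is homologous to $1_{\R/n\Z}+\lfloor n\alpha\rfloor\cdot 1_{\partial D}$, so
\[
\Act_n(u(0,\cdot))\;=\;\Act_n(1_{\R/n\Z})+\lfloor n\alpha\rfloor\Act_n(1_{\partial D}).
\]

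For $\Act_n(\gamma_n)$, from the definition (\ref{E:defn_of_Action_n}) it is immediate that $\Act_n(\gamma_n)=n\Act_1(\gamma_1)$ and $\Act_n(1_{\R/n\Z})=n\Act_1(1_{\R/\Z})$, while $\Act_n(1_{\partial D})=\Act_1(1_{\partial D})$ since $d\tau$ pulls back to zero on $1_{\partial D}$ and $H$ is constant on $\partial D$. Combined with Corollary \ref{L:action_of_gamma_1}, this gives
\[
\Act_n(\gamma_n)\;=\;\Act_n(1_{\R/n\Z})+n\alpha\,\Act_n(1_{\partial D}).
\]
Substituting both expressions into the Stokes computation yields
\[
E_\omega(F)\;=\;(n\alpha-\lfloor n\alpha\rfloor)\,\Act_n(1_{\partial D})\;=\;\{n\alpha\}\,\Act_n(1_{\partial D}).
\]

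It remains to compute $\Act_n(1_{\partial D})$. On this loop $d\tau$ vanishes, so only the $\lambda_0$-term contributes and Stokes again gives $\Act_n(1_{\partial D})=\int_{\partial D}\lambda_0=\int_D\omega_0=\pi$, yielding the claimed $E_\omega(F)=\{n\alpha\}\pi$. The only slightly subtle step is justifying the boundary homology class of $u(0,\cdot)$: one must check that the ``boundary index'' in Theorem \ref{T:existence_and_uniqueness_of_folns_for_pseudorotations} really is the degree of the $\partial D$-component and that the $\tau$-component contributes a factor of $1_{\R/n\Z}$; this follows from the degree-$+1$ condition in (\ref{E:hol_curve_equation_1}) and the definition of boundary index, so no further work beyond a clean bookkeeping of the two generators of $H_1(\R/n\Z\times\partial D)$ is needed.
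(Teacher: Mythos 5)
Your proposal is correct and follows essentially the same route as the paper's own proof: Stokes' theorem to write $E_\omega(F)=\Act_n(\gamma_n)-\Act_n(u(0,\cdot))$, the homology class of $u(0,\cdot)$ determined by the boundary index $\lfloor n\alpha\rfloor$, the scaling relations $\Act_n(\gamma_n)=n\Act_1(\gamma_1)$ etc., and Corollary \ref{L:action_of_gamma_1} to evaluate $\Act_1(\gamma_1)$. The only cosmetic difference is that you spell out $\Act_n(1_{\partial D})=\int_{\partial D}\lambda_0=\int_D\omega_0=\pi$ where the paper leaves it as $\int_D dx\wedge dy$.
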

\begin{proof}
By Stokes theorem as in the last lemma,
\[
		    E_{\omega}(F)=\Act_n(\gamma_n) - \Act_n(u(0,\cdot))
\]
where $\u=(a,u):\R^+\times\R/n\Z\rightarrow\R\times Z_n$ is a parameterization of $F$.
Using corollary \ref{L:action_of_gamma_1} and that $u(0,\cdot):\R/n\Z\rightarrow\R/n\Z\times\partial D$
is homologous to $t\mapsto(t,e^{2\pi i(\lfloor n\alpha\rfloor/n)t})$, this becomes
\begin{align*}
  E_{\omega}(F)&=n\Big(\Act_1(1_{\R/\Z}) + \alpha\Act_1(1_{\partial D})\Big)-\\
			&\hspace{100pt}\Big(\Act_n(1_{\R/n\Z}) + \lfloor n\alpha\rfloor\Act_n(1_{\partial D})\Big)\\
		&=\Big(n\Act_1(1_{\R/\Z}) + n\alpha\Act_1(1_{\partial D})\Big)-\\
			&\hspace{100pt}\Big(n\Act_1(1_{\R/\Z}) + \lfloor n\alpha\rfloor\Act_1(1_{\partial D})\Big)\\
			&=(n\alpha-\lfloor n\alpha\rfloor)\Act_1(1_{\partial D})\\
			&=\{n\alpha\}\cdot\int_{D}dx\wedge dy.
\end{align*}
\end{proof}

\section{Compactness}\label{S:compactness}

Norms, such as $\|\nabla\u_n\|_{L^\infty}$, are implicitely with respect to the Riemannian metric
$dx^2+dy^2+d\tau^2+da^2$ on $\R\times Z_n$, where $(x,y)$ are the standard Euclidean coordinates
on the disk, $\tau$ is the ``coordinate'' on $\R/n\Z$ and $a$ is the $\R$-coordinate.  This
metric is $J_n$-invariant.

\subsection{Compactness when $n$ is uniformly bounded.}

We wish to prove lemma \ref{L:nice_form}.  This will follow from the next two lemmas.

\begin{lemma}[The Floer equation from the Cauchy-Riemann equations]\label{L:form_of_curves_with_bounded_grad}
Let $\u=(a,\tau,z):\R^+\times\R/n\Z\rightarrow\R\times\R/n\Z\times D$ be a solution
to (\ref{E:hol_curve_equation_1}) for which $\|\nabla\u\|_{L^\infty}<\infty$.
Then there exist $(a_0,\tau_0)\in\R\times\R/n\Z$ such that
\begin{equation}\label{E:a_and_tau_are_linear}
 \left\{\begin{aligned}
         a(s,t)&=s + a_0 \\
         \tau(s,t)&=t+\tau_0
        \end{aligned}
	\right.
\end{equation}
for all $(s,t)\in\R^+\times\R/n\Z$, and moreover $z:\R^+\times\R/n\Z\rightarrow D$ satisfies the following
Floer equation:
\begin{equation}\label{E:disk_component_satisfies_floer_equation}
	  \partial_sz(s,t)+i\Big(\partial_tz(s,t)-X_H(t+\tau_0,z(s,t))\Big)=0
\end{equation}
for all $(s,t)\in\R^+\times\R/n\Z$.
\end{lemma}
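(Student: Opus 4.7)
The plan is to write out the Cauchy--Riemann equation $\partial_s\u+J_n(\u)\partial_t\u=0$ component by component in the splitting $T(\R\times\R/n\Z\times D) = \R\partial_a\oplus\R\partial_\tau\oplus TD$, and then apply a complex-analytic argument to the $(a,\tau)$-components. By definition $J_n\partial_a=\partial_\tau+X_H(\tau,z)$; combining this with $J_n^2=-\Id$ and $J_n|_{TD}=i$ yields $J_n\partial_\tau=-\partial_a-iX_H(\tau,z)$. Extracting components of the equation in this basis produces
\begin{equation*}
\partial_sa=\partial_t\tau,\qquad \partial_s\tau=-\partial_ta,
\end{equation*}
together with a remaining $TD$-component equation
\begin{equation*}
\partial_sz+i\partial_tz+\bigl((\partial_ta)-i(\partial_t\tau)\bigr)X_H(\tau,z)=0.
\end{equation*}

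The first two scalar equations are precisely the Cauchy--Riemann equations for $(a,\tau)$. Lifting $\tau$ to $\tilde\tau:\R^+\times\R\to\R$ via the universal cover, the degree-$+1$ hypothesis at $s=0$ (which persists for all $s$ by continuity) gives $\tilde\tau(s,t+n)=\tilde\tau(s,t)+n$, so the function $g(s,t):=(a(s,t)-s)+i(\tilde\tau(s,t)-t)$ descends to a single-valued holomorphic function on the half-cylinder $\R^+\times\R/n\Z$ whose gradient is bounded. Expanding $g$ in a Fourier series $g(s,t)=\sum_{k\in\Z}c_k(s)e^{2\pi ikt/n}$, holomorphicity forces $c_k(s)=c_k(0)e^{2\pi ks/n}$. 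Parseval applied to $\partial_sg$ together with $\|\nabla\u\|_{L^\infty}<\infty$ gives $\sum_k k^2|c_k(0)|^2e^{4\pi ks/n}\leq\textup{const}$ for all $s\geq 0$, and letting $s\to\infty$ forces $c_k(0)=0$ for every $k>0$.

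The boundary condition $\u(0,t)\in L_c$ gives $a(0,t)\equiv c$, so $\real g(0,t)\equiv c$; since the $k$-th Fourier coefficient of $\real g(0,\cdot)$ is $\tfrac12(c_k(0)+\overline{c_{-k}(0)})$, combining this with the vanishing just established forces $c_k(0)=0$ for every $k\neq 0$. Hence $g\equiv c_0$ is constant; setting $a_0:=\real c_0$ and $\tau_0:=\imaginary c_0\bmod n$ yields (\ref{E:a_and_tau_are_linear}). Finally, substituting $\partial_ta=0$ and $\partial_t\tau=1$ into the remaining $TD$-component equation collapses it to $\partial_sz+i(\partial_tz-X_H(t+\tau_0,z))=0$, which is (\ref{E:disk_component_satisfies_floer_equation}). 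The main obstacle is the Fourier step of paragraph two: one must exploit the bounded-gradient hypothesis to kill the exponentially growing Fourier modes, and the constancy of the real part of the boundary value to kill the exponentially decaying ones; together they force $g$ to be constant.
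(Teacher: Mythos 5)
Your proposal is correct, and it takes a genuinely different route from the paper's proof at the key analytic step. Both proofs start identically (unpacking the Cauchy--Riemann equation in the splitting $\R\partial_a\oplus\R\partial_\tau\oplus TD$ to get $a_s=\tau_t$, $\tau_s=-a_t$, plus the remainder equation in $TD$). The paper then lifts $a$ to the universal cover of the half-cylinder, uses the constant Dirichlet boundary value $a(0,\cdot)\equiv c$ to perform an odd Schwarz reflection across $\{s=0\}$, and applies Liouville's theorem to the bounded harmonic partial derivatives of the extension on the whole plane; the constants $b,c$ in $a=cs+bt+a_0$ are then pinned down by $t$-periodicity of $a$ and the degree-one condition on $\tau$. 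You instead stay on the half-cylinder and expand the holomorphic function $g=(a-s)+i(\tilde\tau-t)$ in Fourier modes $c_k(0)e^{2\pi kz/n}$, killing the growing modes ($k>0$) with the $L^\infty$ gradient bound via Parseval and the decaying modes ($k<0$) by pairing them with the already-killed positive modes through the reality constraint $\real g(0,\cdot)\equiv\textup{const}$. Your Fourier argument packages periodicity and the degree condition into the setup (they are what make $g$ a well-defined single-valued function on the half-cylinder) rather than invoking them after Liouville, and avoids the reflection principle entirely. Both arguments are elementary and of comparable length; the paper's is perhaps marginally more standard in this literature, while yours makes the interaction between the two hypotheses (bounded gradient versus constant-real-part boundary condition) and the two families of Fourier modes more transparent. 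The final substitution of $a_t=0$, $\tau_t=1$ into the $TD$-component equation is the same in both.
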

\begin{proof}
Writing out in coordinates what it means for $\u$ to satisfy (\ref{E:hol_curve_equation_1})
gives us
 \begin{equation}\label{E:1007777}
 (a_s-\tau_t)\partial_a+(a_t+\tau_s)\partial_\tau+
    \Big(a_tX_H(\tau,z)+z_s+i\big(z_t-\tau_tX_H(\tau,z)\big)\Big)=0.
\end{equation}
The boundary condition on $\u$ in (\ref{E:hol_curve_equation_1}) implies $a_t(0,t)=0$
for all $t\in\R/n\Z$.  From (\ref{E:1007777}),
\begin{equation}\label{E:eqns_for_a_and_tau}
 \begin{aligned}
					a_{s}(s,t)&=\tau_{t}(s,t)\\
					a_{t}(s,t)&=-\tau_{s}(s,t)
 \end{aligned}
\end{equation}
for all $(s,t)\in\R^{+}\times\R/n\Z$.  In particular both functions $a,\tau$ lift to
harmonic functions on the upper half plane with gradient bounded in $L^{\infty}$.
The boundary conditions on $a$ allow a smooth extension by reflection to the whole plane,
still with gradient in $L^{\infty}$, and therefore by Liouville the partial derivatives of $a$
are constant.  So there exists $b,c,a_0\in\R$ so that
$a(s,t)=cs+bt+a_0$ for all $(s,t)\in\R^{+}\times\R/n\Z$.  Putting this into (\ref{E:eqns_for_a_and_tau})
there exists $\tau_{0}\in\R$ so that $\tau(s,t)=ct-bs+\tau_0$ for all $(s,t)\in\R^{+}\times\R/n\Z$.
The $n$-periodicity of $a$ in the $t$ variable implies $b=0$.
Also $\tau(s,t+n)=\tau(s,t)+n$ for all $(s,t)\in\R^{+}\times\R$ because
we are assuming that $\deg(\u)=1$.  Therefore $c=1$.  This proves (\ref{E:a_and_tau_are_linear}).

From (\ref{E:1007777}) we also have $a_tX_H(\tau,z)+z_s+i\big(z_t-\tau_tX_H(\tau,z)\big)=0$.
But we have shown that $\tau_t\equiv1$ and $a_t\equiv0$.  Substituting these in we obtain
\[
				z_s+i\big(z_t-X_H(t+\tau_0,z)\big)=0
\]
as required.
\end{proof}

The next statement says that we can use the above relation between the Cauchy-Riemann and
Floer equations if (and only if) the $\lambda$-energy is finite.

\begin{lemma}\label{L:finite_lambda_energy_implies_grad_a_bounded}
Let $\u=(a,\tau,z)$ be a solution to (\ref{E:hol_curve_equation_1}).  Then
$E_\lambda(\u)<\infty$ implies $\|\nabla a\|_{L^\infty}<\infty$
(equivalently $\|\nabla\tau\|_{L^\infty}<\infty$).
\end{lemma}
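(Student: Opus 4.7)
Plan. The plan is to argue by contradiction via Hofer-style bubbling-off. As in the proof of Lemma \ref{L:form_of_curves_with_bounded_grad}, writing out the Cauchy--Riemann equation in coordinates forces $a_s = \tau_t$ and $a_t = -\tau_s$, so $f := a + i\tau$ is holomorphic with $|f'|^2 = |\nabla a|^2$. A short change of variables then gives
\[
E_\lambda(\u) \;=\; \sup_{\psi \in \mathcal{C}} \int_{\R^+ \times \R/n\Z} \psi(a(s,t))\, |\nabla a(s,t)|^2 \, ds\, dt,
\]
so finite $\lambda$-energy is a weighted $L^2$-bound on $\nabla a$.

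Suppose for contradiction $\|\nabla a\|_{L^\infty} = +\infty$. First I would apply Hofer's lemma to the continuous function $|\nabla a|$ to obtain centres $w_k \in \R^+ \times \R/n\Z$ with $R_k := |\nabla a(w_k)| \to +\infty$ and $|\nabla a| \leq 2 R_k$ on $B(w_k, \delta_k/R_k)$ for some $\delta_k \to +\infty$. Lifting to the universal cover and using the $\R$-invariance of $J_n$ to translate in the $a$-coordinate, I would consider the rescaled maps
\[
\v_k(w) \;:=\; \u\!\left(w_k + w/R_k\right) - \bigl(a(w_k),\, 0,\, 0\bigr),
\]
which are $J_n$-holomorphic on increasingly large (half-)balls with $|\nabla a_k(0)| = 1$ and $|\nabla a_k| \leq 2$ uniformly. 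The harmonic conjugacy gives $|\nabla \tau_k| = |\nabla a_k| \leq 2$, and the $\bar\partial$-equation
\[
\bar\partial z_k \;=\; \tfrac{1}{2}\bigl(i\, (\tau_k)_t - (a_k)_t\bigr)\, X_H(\tau_k, z_k),
\]
combined with the $C^0$-bound $z_k \in D$, then promotes this to uniform $C^\infty_{\loc}$ control on $\v_k$ by elliptic bootstrapping. Passing to a subsequence yields a non-constant $J_n$-holomorphic limit $\v_\infty = (a_\infty, \tau_\infty, z_\infty)$ defined either on $\C$ (interior bubble) or on a half-plane with boundary on a totally real torus $L_c$ (boundary bubble). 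Using that $\psi \mapsto \psi(\cdot - a(w_k))$ preserves $\mathcal{C}$, the identity $\int \v_k^*(\psi(a)\, da \wedge d\tau) = \int \u^*(\psi(\cdot - a(w_k))\, da \wedge d\tau) \leq E_\lambda(\u)$ together with Fatou gives $E_\lambda(\v_\infty) \leq E_\lambda(\u) < \infty$.

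It remains to rule out $\v_\infty$. Lifting $\tau_\infty$ to $\R$ (simply connected domain), $f_\infty := a_\infty + i\tau_\infty$ is a non-constant entire function; in the boundary case the totally real condition $a_\infty|_{\partial} \equiv c$ allows Schwarz reflection to extend $f_\infty$ to a non-constant entire function on all of $\C$, at most doubling $E_\lambda$. The coarea formula applied to $a_\infty = \Re f_\infty$ yields
\[
\int_{\{y \leq a_\infty \leq y+\epsilon\}} |f_\infty'|^2\, dA \;=\; \int_y^{y+\epsilon} L(s)\, ds, \qquad L(s) := \int_{a_\infty^{-1}(s)} |f_\infty'|\, d\mathcal{H}^1,
\]
and testing against $\psi_\epsilon \in \mathcal{C}$ approximating $\epsilon^{-1}\chi_{[y, y+\epsilon]}$ forces $L(s) \leq E_\lambda(\v_\infty) < \infty$ for a.e.\ $s$. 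On the other hand, the one-dimensional area formula applied to the holomorphic map $f_\infty : a_\infty^{-1}(s) \to \{s\} + i\R$ gives $L(s) = \int_\R N_\infty(s + it)\, dt$, where $N_\infty(\zeta)$ counts preimages of $\zeta$ under $f_\infty$ with multiplicity. Picard's theorem guarantees $N_\infty \geq 1$ off at most one point, so $L(s) = +\infty$ for a.e.\ $s$, a contradiction. The parenthetical equivalence follows at once from $|\nabla \tau|^2 = \tau_s^2 + \tau_t^2 = a_t^2 + a_s^2 = |\nabla a|^2$ pointwise.

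The main obstacle I anticipate lies in the bubbling step: handling cleanly both the interior and boundary bubble cases, and verifying that the elliptic bootstrap for $z_k$ really delivers $C^\infty_{\loc}$ convergence of the full map $\v_k$ even though only $|\nabla a|$ (not $|\nabla \u|$) was controlled by hand. The concluding coarea-plus-Picard calculation is then short once a non-constant finite $\lambda$-energy bubble has been extracted.
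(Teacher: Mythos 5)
Your proof is correct, but it is considerably more elaborate than the paper's and follows a genuinely different path at the end.  The paper rescales only the holomorphic map $f=(a,\tau):\R^+\times\R/n\Z\to\R\times\R/n\Z$, not the full curve $\u$, so no elliptic bootstrap for the $z$-component is needed; the bubble the paper extracts is just a holomorphic map $g:\C\to\R\times\R/n\Z$. The paper also disposes of the boundary-bubble case in one line: $\{0\}\times\R/n\Z$ is compact and $|\nabla a|$ is continuous, so the gradient is automatically bounded near the boundary, forcing any blow-up points to escape to $s\to\infty$; your Schwarz reflection step is therefore unnecessary. Finally, at the contradiction stage the paper uses the bound $|\nabla g|\leq 2$ that Hofer's lemma already gave: Liouville forces $g$ to be affine with constant nonzero derivative, and an affine map visibly has $E_\lambda=+\infty$ because the preimage of any strip $\{y\leq a\leq y+\epsilon\}$ has infinite area. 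Your coarea-plus-Picard argument proves the stronger assertion that \emph{any} nonconstant entire function has infinite $\lambda$-energy, without using the gradient bound at all; that generality is correct but unneeded here, since the gradient bound is in hand and Liouville already closes the argument. In short, same contradiction-by-bubbling strategy, but you bubbled off more than was required and replaced Liouville by Picard.
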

\begin{proof}
The equations (\ref{E:a_and_tau_are_linear}) in the last lemma did not require the gradient bounds,
and so the map $f:\R^+\times\R/n\Z\rightarrow\R\times\R/n\Z$ given by $f(s,t):=(a(s,t),\tau(s,t))$
in terms of the $a$ and $\tau$ components of $\u$, is holomorphic.  Also
$f(0,t)\in\{c\}\times\R/n\Z$ for all $t\in\R/n\Z$.

Arguing indirectly suppose that the gradient of $a$ is unbounded.  Then the gradient must
blow up along a sequence of points that leaves every compact subset of the domain, in particular
does not converge to the boundary.  Therefore standard rescaling arguments applied to $f$ yield a
holomorphic plane $g:\C\rightarrow\R\times\R/n\Z$ with the following properties:
\begin{align*}
			    |\nabla g(0)|&=1 \\
			     |\nabla g(\xi)|&\leq2\qquad\mbox{for all }\xi\in\C\\
			     E_{\lambda}(g)&<\infty.
\end{align*}
Indeed, it is easily checked that $E_{\lambda}(g)\leq E_{\lambda}(\u)$.
The first two properties imply that $g$ has constant, non-zero, gradient from Liouville's theorem.
But this implies the contradiction $E_{\lambda}(g)=+\infty$.
\end{proof}

In our proof that the maps $\varphi_{n}$ are continuous we used the following.

\begin{lemma}\label{L:compactness_of_each_foliation}
For each fixed $n\in\N$ $\F_n^+$ is compact in the following sense.  Suppose that $F_j\in\F_n^+$ is a
sequence of leaves over $j\in\N$, and $p_j\in F_j$ is a sequence of points.  Suppose that
$p_j\rightarrow p$ some $p\in\R\times Z_n$.  Then there exists a sequence of parameterizations
of $F_{j_k}$ which converge in a $C^\infty_{\loc}$-sense to a parameterization of
the unique leaf in $\F_n^+$ containing $p$.
\end{lemma}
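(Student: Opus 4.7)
The plan is to apply a Gromov-Floer style compactness argument to a sequence of parameterizations of the leaves $F_j$, using the uniform energy bounds from Lemma \ref{L:formula_1} together with the uniqueness clause of Theorem \ref{T:existence_and_uniqueness_of_folns_for_pseudorotations}.

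First I would set up the parameterizations. If $F_j = C_n$ for infinitely many $j$ the claim is immediate by $\R$-invariance, so assume each $F_j$ is a half-cylinder leaf. By Lemma \ref{L:nice_form}, $F_j$ is parameterized by a solution $\u_j = (a_j, \tau_j, z_j)$ of (\ref{E:hol_curve_equation_1}) on $[S_j, \infty) \times \R/n\Z$ with $a_j(s,t) = s + c_j$ and $\tau_j(s,t) = t + d_j$. Using the $\R$-invariance of $\F_n^+$ and the freedom to translate in the $t$-direction, I would normalize so that the marked point $p_j$ equals $\u_j(0, \tau_0)$ for a fixed $\tau_0 \in \R/n\Z$ chosen to match the $\tau$-coordinate of the limit point $p$.

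Next I would invoke the uniform energy bounds: Lemma \ref{L:formula_1} gives $E_\omega(\u_j) = \{n\alpha\}\pi$ and the remark following Lemma \ref{L:nice_form} gives $E_\lambda(\u_j) = n$, both independent of $j$. With uniformly bounded total energy, SFT compactness \cite{BEHWZ_SFT_compactness}---or equivalently Floer compactness applied to the disk components $z_j$ via Lemma \ref{L:nice_form}---extracts a subsequence converging to a finite energy holomorphic building. The heart of the argument is to rule out every degeneration, so that the limit is a single smooth curve through $p$. Sphere bubbles are excluded by $\pi_2(D) = 0$. A disk bubble would project to a non-constant holomorphic disk in $D$ with boundary in $\partial D$, whose symplectic area is a positive integer multiple of $\pi$, exceeding the budget $\{n\alpha\}\pi < \pi$ (here we use that $\alpha$ is irrational). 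Cylindrical breaking could only occur along the unique periodic orbit $\gamma_n$, but any such cylinder has equal action at both ends and therefore zero $\omega$-energy, forcing it to be a trivial orbit cylinder that can be reabsorbed.

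Consequently the subsequence converges in $C^\infty_{\loc}$ to a solution $\u_\infty$ of (\ref{E:hol_curve_equation_1}) with finite total energy, boundary index $\lfloor n\alpha \rfloor$, and image containing $p$. Through $p$ there is a unique leaf $F_\infty \in \F_n^+$, and the limit image and $F_\infty$ are both embedded $J_n$-holomorphic surfaces through $p$; by positivity of intersections they must coincide, which upgrades the $C^\infty_{\loc}$-convergence of the $\u_j$ to a parameterization of $F_\infty$ as required. The principal obstacle is the bubbling and breaking analysis, but the strict inequality $\{n\alpha\}\pi < \pi$ combined with the fact that $\gamma_n$ is the only periodic orbit of $R_n$ (a consequence of $\varphi$ being an irrational pseudo-rotation) provides a clean topological barrier that trivializes this step.
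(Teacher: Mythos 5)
Your proposal follows essentially the same route the paper takes: the paper's own proof simply cites the standard Hofer--Wysocki--Zehnder finite-energy-foliation compactness argument (``positivity of intersections, used many times in \cite{HWZ_fef}, as the $\lambda$-energy and $\omega$-energy are uniformly bounded in $j$''), and your write-up correctly fills in the ingredients of that argument---uniform energy bounds from Lemma \ref{L:formula_1}, the Floer/SFT compactness extraction, bubbling exclusion via the $\{n\alpha\}\pi<\pi$ threshold and exactness of $\omega_n$, and finally positivity of intersections to identify the limit with the unique leaf through $p$.

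The one step that is not quite cleanly handled is the case $p\in C_n$ with all $F_j$ being half-cylinder leaves (as opposed to $F_j=C_n$ for infinitely many $j$, which you do dispose of). In that situation $S_j\rightarrow-\infty$, the boundary runs off to $-\infty$, the domain degenerates to the full cylinder $\R\times\R/n\Z$, and the $C^\infty_\loc$-limit is a parameterization of $C_n$ itself---a cylinder, not a half-cylinder with boundary index $\lfloor n\alpha\rfloor$ as your text asserts. This is exactly the scenario subsequently used in the paper's proof that $\varphi_n$ is continuous at the origin, so it needs to come out of the lemma; it does, but not via a curve satisfying (\ref{E:hol_curve_equation_1}) with a boundary condition. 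Conversely, if $p\notin C_n$, you should note why $S_j$ stays bounded (otherwise the limit would be a finite-$\omega$-energy cylinder asymptotic to $\gamma_n$ at both ends, hence $C_n$, contradicting that it passes through $p\notin C_n$). Both of these are small additions, and once made your argument is complete and matches the paper's intent.
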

\begin{proof}
This is a standard property of finite energy foliations from positivity of intersections, used
many times in \cite{HWZ_fef} as the $\lambda$-energy and $\omega$-energy are uniformly bounded in $j$.
\end{proof}

\subsection{Compactness as $n\rightarrow\infty$.}
In our proof of convergence of the disk maps $\varphi_n$ in proposition \ref{T:uniform_convergence},
we used a compactness statement for a sequence of $J_{n}$-holomorphic
maps $\u_n:[S_n,\infty)\times\R/n\Z\rightarrow\R\times Z_n$ for which
\begin{align*}
		  E_\lambda(\u_n)&=n\rightarrow+\infty \\
		  E_{\omega}(\u_n)&=\{n\alpha\}\pi
\end{align*}
for some irrational real number $\alpha$.  Hence the total energy
$E(\u_n)= E_\lambda(\u_n)+ E_{\omega}(\u_n)$ diverges to $+\infty$.  In general, for a sequence
of maps $\{\u_n\}$ for which the total energy is unbounded one cannot expect uniform bounds on the gradient
in $L^\infty$.  However if the $\lambda$-energy grows at most linearly with $n$, and the
$\omega$-energy is bounded then indeed uniform bounds on $\|\nabla\u_n\|_{L^\infty}$ can be
achieved.  (Actually much weaker assumptions suffice, but we will not need to explore these here.)
Our arguments will be further simplified since we restricted to a subsequence for which the
$\omega$-energy of the sequence decays to zero.

Consider a sequence  $\{\u_n\}_{n\in\N}$ of smooth $J_n$-holomorphic maps, and numbers $c_n,S_n\in\R$
with $S_{n}\leq0$, satisfying for each $n$,
\begin{equation}\label{E:hol_curve_equation_with_boundary_4}
\left\{\begin{aligned}
		    &\u_n=(a_n,\tau_n,z_n):[S_n,\infty)\times\R/n\Z\rightarrow\R\times Z_{n}, \\
		    &\partial_s\u_n(s,t)+J_n(\u_n(s,t))\partial_t\u_n(s,t)=0 \\
		    &\u_n(S_n,t)\in L_{c_n}\\
		    &\tau_n:(S_n,\cdot):\R/n\Z\rightarrow\R/n\Z\qquad\mbox{has degree }1
\end{aligned}\right.
\end{equation}
for all $(s,t)\in[S_n,\infty)\times\R/n\Z$.

\begin{proposition}\label{T:uniform_grad_bounds}
Suppose that $E_{\lambda}(\u_n)<\infty$ for each $n\in\N$, and
$\lim_{n\rightarrow\infty}E_{\omega}(\u_n)=0$.  Then there exists
$C\in(0,\infty)$ such that
\begin{equation*}
			\|\nabla\u_n\|_{L^\infty([S_n,\infty)\times\R/n\Z)}\leq C
\end{equation*}
for all $n\in\N$.
\end{proposition}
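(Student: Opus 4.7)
\emph{Plan.} The plan is to reduce the gradient bound to a bubbling-off analysis for a sequence of Floer solutions whose Dirichlet energy tends to zero, and then to kill any would-be bubble using the vanishing $\omega$-energy.

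\emph{Reduction.} Since $E_\lambda(\u_n)<\infty$ for each $n$, Lemma~\ref{L:nice_form} lets us reparameterize so that
\[
\u_n(s,t) = (s,\,t,\,z_n(s,t))
\]
on a half-cylinder $[S_n,\infty)\times\R/n\Z$, with $z_n$ satisfying the Floer equation and boundary condition $z_n(S_n,\cdot)\subset\partial D$. In this normal form the gradients of the $\R$- and $\R/n\Z$-components of $\u_n$ are identically of unit size, so the proposition reduces to a uniform bound $\|\nabla z_n\|_{L^\infty}\leq C$. The Floer--Dirichlet identity recorded right after Lemma~\ref{L:nice_form} gives
\[
\tfrac{1}{2}\int\bigl(|\partial_s z_n|^2 + |\partial_t z_n - X_H|^2\bigr)\,ds\,dt = E_\omega(\u_n)\longrightarrow 0,
\]
so in particular $\int|\partial_s z_n|^2\,ds\,dt\to 0$.

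\emph{Bubbling-off.} Assume for contradiction that $\|\nabla z_n\|_{L^\infty}\to\infty$. Using Hofer's lemma, produce points $p_n$ in the domain of $z_n$ and numbers $\rho_n:=|\nabla z_n(p_n)|\to\infty$, $\epsilon_n\to 0$ with $\epsilon_n\rho_n\to\infty$, such that $|\nabla z_n|\leq 2\rho_n$ on $B_{\epsilon_n}(p_n)$. Rescale
\[
\zeta_n(s,t) := z_n\bigl(p_n + (s,t)/\rho_n\bigr),
\]
on $B_{\epsilon_n\rho_n}(0)$ intersected with a half-plane when $\rho_n\cdot\dist(p_n,\partial)$ stays bounded. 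Then $|\nabla\zeta_n(0)|=1$, $\|\nabla\zeta_n\|_{L^\infty}\leq 2$, and the Floer equation rescales to
\[
\partial_s\zeta_n + i\partial_t\zeta_n = O(1/\rho_n)\longrightarrow 0
\]
uniformly on compact sets, with boundary values (in the boundary case) remaining in $\partial D$. Elliptic regularity and Arzel\`a--Ascoli then produce a $C^\infty_{\loc}$-subsequential limit $\zeta_\infty:\Omega\to D$ that is honestly $i$-holomorphic, with $\Omega=\C$ in the interior case and $\Omega=\H$ with $\zeta_\infty(\partial\H)\subset\partial D$ in the boundary case. Since $|\nabla\zeta_\infty(0)|=1$, the limit is non-constant.

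\emph{Killing the bubble; main obstacle.} Conformal invariance of the Dirichlet integral in two dimensions yields, for every fixed $R>0$,
\[
\int_{B_R(0)\cap\Omega}|\partial_s\zeta_n|^2\,ds\,dt = \int_{B_{R/\rho_n}(p_n)}|\partial_s z_n|^2\,ds\,dt \leq 2E_\omega(\u_n)\longrightarrow 0.
\]
By lower semicontinuity, $\int_\Omega|\partial_s\zeta_\infty|^2\,ds\,dt = 0$; since $\zeta_\infty$ is holomorphic this forces it to be constant, contradicting $|\nabla\zeta_\infty(0)|=1$. The place where care is needed is the boundary-bubble case: one must verify that the rescaled half-cylinders converge to the upper half-plane, that the totally real condition on $\partial D$ survives $C^\infty_{\loc}$ convergence of $\zeta_n$, and that Hofer-lemma points $p_n$ near $\{s=S_n\}$ can be chosen so that $B_{\epsilon_n}(p_n)$ and its rescalings have the expected shape. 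The interior case alone would already follow from Liouville's theorem applied to a bounded holomorphic plane; the vanishing $\omega$-energy is essential only to rule out a boundary bubble.
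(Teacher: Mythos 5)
Your proof is correct and follows essentially the same route as the paper: reduce via the normal form of Lemma~\ref{L:nice_form} so only $\|\nabla z_n\|_{L^\infty}$ is at issue, rescale at a putative gradient blow-up to produce a holomorphic plane or half-plane with totally real boundary in $\partial D$, and then use the vanishing $\omega$-energy (the paper via Fatou applied to $\v^{*}\omega_{\infty}$, you via the rescaling identity for the Dirichlet integral) to force the bubble to be constant, contradicting the normalized gradient. Your closing observation -- that the energy hypothesis is only truly needed in the boundary-bubble case, since an interior bubble is killed by Liouville alone -- is a nice refinement that the paper does not spell out, but the overall argument is the same.
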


Note that we do not assume uniform bounds on the $\lambda$-energy.

\begin{proof}
Since $E_{\lambda}(\u_n)<\infty$ for each $n$, lemma \ref{L:finite_lambda_energy_implies_grad_a_bounded}
implies $\|\nabla a_n\|_{L^\infty}<\infty$ (for each $n$).  Therefore, since also
each $\tau_{n}$ has degree $1$, lemma \ref{L:form_of_curves_with_bounded_grad} applies so
\begin{equation*}\label{E:a_and_tau_are_linear_1}
 \left\{\begin{aligned}
         a_n(s,t)&=s + a_n \\
         \tau_n(s,t)&=t+\tau_n
        \end{aligned}
	\right.
\end{equation*}
for all $(s,t)\in[S_n,\infty)\times\R/n\Z$, some constants $(a_n,\tau_n)\in[S_n,\infty)\times\R/n\Z$.
Thus
\begin{equation*}\label{E:a_and_tau_are_linear}
 		\|\nabla a_n\|_{L^\infty}\leq 1 \qquad\mbox{and}\qquad\|\nabla\tau_n\|_{L^\infty}\leq1
\end{equation*}
for all $n\in\N$.

It therefore remains to show that the gradients of the $z_{n}$ are uniformly bounded.  Arguing indirectly
we find a sequence $\xi_n\in[S_n,\infty)\times\R/n\Z$ for which
$|\nabla\u_n(\xi_n)|\geq|\nabla z_n(\xi_n)|\rightarrow\infty$ as $n\rightarrow\infty$.
A standard rescaling argument produces a $J_{\infty}$-holomorphic plane or half plane in
$\R\times Z_{\infty}$.
That is, a map $\v:\C\rightarrow\R\times Z_{\infty}$, or 
$\v:\H\rightarrow\R\times Z_{\infty}$ with totally real boundary conditions
$\v(\partial\H)\subset\{c\}\times\partial Z_{\infty}$ for some $c\in\R$.
In either case, as a result of the rescaling process, $\v=(a,\tau,z)$ has the
following properties:
\begin{align*}
					\nabla a\equiv&0\\
					\nabla\tau\equiv&0\\
			  |\nabla\v(0)|&>0, \\
			  E_{\omega}(\v)=&0.
\end{align*}
The first two properties are because of the uniform bounds $\|\nabla a_{j}\|_{L^\infty}\leq 1$
and $\|\nabla\tau_{j}\|_{L^\infty}\leq 1$ respectively even before rescaling, so that rescaling
``kills'' these terms in the limit.  The vanishing $\omega$-energy is because
$E_{\omega}(\v)\leq\lim_{n\rightarrow\infty}E_{\omega}(\u_n)=0$ by Fatou's lemma.
Thus there exist constants $a_{0},\tau_{0}\in\R$ such that
\begin{align*}
	      \v(s,t)=(a_{0},\tau_{0},z(s,t))\in\R\times\R\times D
\end{align*}
for all $(s,t)\in\C$ (resp. all $(s,t)\in\H$).  That $\v$ is $J_\infty$-holomorphic translates
into $z:\C\rightarrow D$ or $z:\H\rightarrow D$ satisfying the equation
$a_tX_H(\tau,z)+z_s+i\big(z_t-\tau_tX_H(\tau,z)\big)=0$, see (\ref{E:1007777}).
So $a$ and $\tau$ constant implies $z_s+iz_t=0$.  (We could alternatively have just rescaled
the sequence $\{z_{n}\}$ as in Floer theory, to get the same conclusion.) Therefore,
\begin{align*}
 	0= E_{\omega}(\v)&=\int\v^*\omega=\int \frac{1}{2}\left(|z_s|^2+|z_t|^2\right)dsdt
\end{align*}
and so $z$ is also constant.  Thus we have shown that $\v$ is constant, contradicting
$|\nabla\v(0)|>0$.
\end{proof}

Now standard arguments can convert these uniform bounds on the gradient in $C^0$ to uniform
$C^k$-bounds on the gradient for all $k\in\N$.
The key result is the following local statement which is proven using the $W^{k,p}$-elliptic
estimates for the linear Cauchy-Riemann operator.

For each $r\geq0$ let $D_r:=\{(x,y)\in\R^2|x^2+y^2\leq r\}$.

\begin{theorem}[Hofer\cite{Hofer_93}]\label{T:grad_bds_implies_Cinfty_bds}
Let $J$ be a fixed, smooth, almost complex structure on $\R^{2d}$, $d\in\N$.
Let $C\in(0,\infty)$.
Consider the set of maps $\mathcal{B}(J,C)\subset C^{\infty}(D_1,\R^{2d})$, consisting of
all $f$ satisfying,
\begin{align*}
						\partial_{s}f+J(f)\partial_{t}f&=0\\
								|f(0)|&\leq 2\\
								\|\nabla f\|_{C^{0}(D_1)}&<C.
\end{align*}
Then for all $r\in(0,1)$ there exists a sequence $c_{k}\in(0,\infty)$ over $k\in\N$,
such that for all $f\in\mathcal{B}(J,C)$,
\begin{equation*}
						\|\nabla f\|_{C^{k}(D_{r})}<c_{k}
\end{equation*}
for all $k\in\N$.
\end{theorem}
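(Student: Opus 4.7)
The proof proceeds by a standard elliptic bootstrap, so I will organize it in three stages: a preliminary pointwise bound to get $f$ into a compact target region, a single $W^{2,p}$-upgrade from the Cauchy-Riemann equation, and then an inductive bootstrap to arbitrary $W^{k,p}$-regularity. Let me sketch the plan.

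\emph{Setting up the equation and a compact target.} From $|f(0)|\leq 2$ and $\|\nabla f\|_{C^{0}(D_{1})}\leq C$ one gets $|f(z)|\leq 2+C$ for all $z\in D_{1}$, so $f$ takes values in the compact set $K=\{y\in\R^{2d}:|y|\leq 2+C\}$. On $K$, $J$ and all of its derivatives are bounded by constants depending only on $J$ and $C$. I would rewrite the nonlinear Cauchy-Riemann equation as a perturbation of the standard one: setting $\bar\partial=\partial_{s}+i\partial_{t}$, the equation $\partial_{s}f+J(f)\partial_{t}f=0$ becomes
\begin{equation*}
\bar\partial f \;=\; \bigl(i-J(f)\bigr)\partial_{t}f,
\end{equation*}
viewed as an equation in $\R^{2d}$ with a lower-order nonlinear right-hand side.

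\emph{The main tool: interior $W^{k,p}$-estimates for $\bar\partial$.} For every pair of radii $r<R\leq 1$, every $k\geq 0$, and every $p\in(1,\infty)$, there exists a constant $M=M(r,R,k,p)$ such that for all $g\in W^{k+1,p}(D_{R},\R^{2d})$,
\begin{equation*}
\|g\|_{W^{k+1,p}(D_{r})} \;\leq\; M\bigl(\|\bar\partial g\|_{W^{k,p}(D_{R})}+\|g\|_{L^{p}(D_{R})}\bigr).
\end{equation*}
For the base step, $(i-J(f))\partial_{t}f$ is uniformly bounded in $L^{\infty}(D_{1})$ by a constant depending only on $J|_{K}$ and $C$; hence it lies in $L^{p}(D_{R})$ with a universal bound, and applying the estimate with $k=0$ gives a universal bound on $\|f\|_{W^{2,p}(D_{r})}$ for any $p<\infty$.

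\emph{The bootstrap and the main obstacle.} Assume by induction that $\|f\|_{W^{k,p}(D_{R})}$ is controlled on some disk, with $k\geq 2$ and $p$ fixed large (so $kp>2$, making $W^{k,p}$ into a Banach algebra that is stable under composition with smooth maps on $K$). Then $J\circ f\in W^{k,p}$ with a controlled norm (this is the one place that requires care: one uses Moser-type composition estimates, available because $J\in C^{\infty}$ and $f$ has already reached Hölder continuity), and the product rule gives $(i-J(f))\partial_{t}f\in W^{k-1,p}$ with a controlled norm. Applying the elliptic estimate with $k$ replaced by $k-1$ then yields a bound on $\|f\|_{W^{k+1,p}(D_{r'})}$ on a slightly smaller disk. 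Iterating $k$ times, then using Sobolev embedding $W^{k+1,p}\hookrightarrow C^{k}$ on $D_{r}$ when $p>2$, produces the desired constants $c_{k}$. The main obstacle is book-keeping the shrinking radii and the nonlinear composition estimate; since all constants depend only on $J$, $C$, and $r$, uniformity over $\mathcal{B}(J,C)$ comes for free.
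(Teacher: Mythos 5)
The paper does not prove this theorem; it cites Hofer's 1993 paper and adds only that the result ``is proven using the $W^{k,p}$-elliptic estimates for the linear Cauchy-Riemann operator'', so your high-level plan is the intended one. However, your scheme contains an off-by-one error that, once corrected, reveals a genuine gap: as written, the bootstrap never gains regularity. The interior estimate for the \emph{first-order} operator $\bar\partial$ that you quote gains exactly one derivative: with $k=0$ it takes $\bar\partial g\in L^p$ to $g\in W^{1,p}$, not $W^{2,p}$. Since the hypothesis $\|\nabla f\|_{C^0}<C$ already gives $f\in W^{1,\infty}\subset W^{1,p}$ with a universal bound, your base step produces nothing new. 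The inductive step has the same flaw: from $f\in W^{k,p}$ you get $(i-J(f))\partial_t f\in W^{k-1,p}$, and the estimate at index $k-1$ returns $f\in W^{k,p}$, not $W^{k+1,p}$. The root cause is that $(i-J(f))\partial_t f$ is \emph{not} a lower-order term relative to $\bar\partial f$---both contain exactly one derivative of $f$---so the decomposition $\bar\partial f = (i-J(f))\partial_t f$ into ``elliptic operator plus source'' is illusory and repeated application of the $\bar\partial$-estimate stalls at $W^{1,p}$.

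To make the bootstrap actually move, one must absorb $J(f)\partial_t$ into the elliptic operator rather than relegate it to the source term. Set $\Lambda_A:=\partial_s+A(z)\partial_t$ with coefficient $A:=J\circ f$, which is uniformly Lipschitz on $D_1$ because $f$ is Lipschitz and $J$ is smooth on the compact target set $K$. Differentiating the equation in $t$, the quantity $g:=\partial_t f$ satisfies $\Lambda_A g=-DJ(f)[g]\,g$, whose right-hand side genuinely contains no derivative of $g$. You then need an interior $W^{1,p}$-estimate for $\Lambda_A$ with Hölder-continuous $A$---obtained from the constant-coefficient estimate by freezing coefficients on small disks, using that $A^2=-\id$ keeps the operator uniformly elliptic---and, for the base step where $g$ is only in $L^\infty$, a difference-quotient argument in place of the formal differentiation to avoid circularity. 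This gives $f\in W^{2,p}$ on a smaller disk with a bound depending only on $J|_K$, $C$, $r$. After that your Banach-algebra and composition estimates do carry the induction, now applied to the differentiated equation $\Lambda_A g=-DJ(f)[g]\,g$ (or to the variable-coefficient operator $\Lambda_A$ directly, with coefficient $A\in W^{k,p}$).
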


Replacing $D_1,D_r$ for half disks $D^+_1,D^+_r$, where $D^+_r:=\{(x,y)\in\R^2|x^2+y^2\leq r,\ y\geq0\}$,
the same statement holds for maps that take the boundary points
$[-1,1]\times\{0\}$ into a smooth path of $J$-totally real subspaces in $\R^{2d}$.

\begin{corollary}\label{C:uniform_global_bds_in_Cinfty}
Suppose $c_0\in(0,\infty)$.  Then there exists a sequence $c_{k}\in(0,\infty)$ over $k\in\N$,
with the following property.  If $\{\u_n\}_{n\in\N}$ is a sequence of solutions to
(\ref{E:hol_curve_equation_with_boundary_4}) such that
$\sup_{n\in\N}\|\nabla\u_n\|_{L^\infty([S_n,\infty)\times\R/n\Z)}\leq c_0$, then
\begin{equation}\label{E:368}
			\|\nabla\u_n\|_{C^{k}([S_n,\infty)\times\R/n\Z)}<c_{k}
\end{equation}
for all $k\in\N$ and $n\in\N$.
\end{corollary}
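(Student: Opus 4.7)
The plan is a standard covering-chart reduction to Theorem \ref{T:grad_bds_implies_Cinfty_bds}, exploiting the translation invariance of $J_n$ in the $\R$- and $\tau$-directions to absorb the $n$-dependence. I would first pass to a single, $n$-independent target: since $J_n$ is the pushforward of $J_\infty$ under the covering $\R\times Z_\infty\to\R\times Z_n$, each $\u_n=(a_n,\tau_n,z_n)$ lifts to a $J_\infty$-holomorphic map
\[
	\ubar_n=(a_n,\taubar_n,z_n)\colon[S_n,\infty)\times\R\longrightarrow \R\times Z_\infty=\R\times\R\times D,
\]
with boundary $\ubar_n(S_n,\cdot)\in\{c_n\}\times\R\times\partial D$ and pointwise gradient norm equal to that of $\u_n$. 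I would then extend $J_\infty$ smoothly to an almost complex structure $\Jhat$ on all of $\R^4$, for instance by interpolating to the standard $i$ outside a larger disk, so that the $\ubar_n$ become $\Jhat$-holomorphic maps into $\R^4$ with $\|\nabla\ubar_n\|_{L^\infty}\leq c_0$.

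Given any base point $\xi_0\in[S_n,\infty)\times\R$, I would look at the interior disk $D_1(\xi_0)$ when $\dist(\xi_0,\{s=S_n\})>1$, and otherwise at a half-disk $D^+_1$ based at the nearest point of $\{S_n\}\times\R$. Translating in the target by the $(a,\taubar)$-components of $\ubar_n(\xi_0)$ puts the base-point image in a bounded region; because $\Jhat$ is invariant under $(a,\taubar)$-translations and because $L_{c_n}=\{c_n\}\times\R\times\partial D$ is likewise translation invariant, the translated map $f_n$ is still $\Jhat$-holomorphic, satisfies $|f_n(0)|\leq 2$ and $\|\nabla f_n\|_{C^0}\leq c_0$, and in the half-disk case maps the boundary into a single fixed smooth path of $\Jhat$-totally real subspaces. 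Applying Theorem \ref{T:grad_bds_implies_Cinfty_bds} (and its half-disk analogue) once, with $C=c_0$, then produces constants $c_k$ depending only on $c_0$ and $\Jhat$ such that $\|\nabla f_n\|_{C^k(D_{1/2})}\leq c_k$ and $\|\nabla f_n\|_{C^k(D^+_{1/2})}\leq c_k$. Covering the domain by such smaller (half-)disks and undoing the target translations and the covering projection yields the global estimate (\ref{E:368}).

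The main point that genuinely needs care is the boundary case: one must check that the half-disk version of Theorem \ref{T:grad_bds_implies_Cinfty_bds}, with a smooth family of $\Jhat$-totally real subspaces along $[-1,1]\times\{0\}$, applies with constants uniform in $n$ and in the choice of base point on $\{S_n\}\times\R$. This reduces exactly to the product structure of $L_{c_n}$ together with the $(a,\taubar)$-translation invariance of $\Jhat$, which after the normalizing translations turns every boundary half-disk into the same model problem on $(\R^4,\Jhat)$ with totally real boundary condition $\{0\}\times\R\times\partial D$; a single application of the boundary elliptic estimate then suffices.
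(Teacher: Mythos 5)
Your proposal is correct and takes essentially the same route as the paper: the paper's one-line proof cites exactly the two facts you exploit, namely that all the $J_n$ lift to the common $J_\infty$ on the universal covering $\R\times\R\times D$, and that $J_\infty$ (and the totally real boundary tori) are invariant under translations in the $a$- and $\tau$-directions, so that after pulling back and normalizing one reduces to a single $n$-independent local model for Theorem \ref{T:grad_bds_implies_Cinfty_bds} and its half-disk variant. Your write-up just makes explicit the covering-chart bookkeeping, the extension of $J_\infty$ off $\R\times\R\times D$ to apply the theorem as stated for maps into $\R^{2d}$, and the normalization of the boundary condition to $\{0\}\times\R\times\partial D$; these are the correct details and they do not change the argument.
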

\begin{proof}
This follows easily from the local result, theorem \ref{T:grad_bds_implies_Cinfty_bds}, using
that the almost complex structures $J_n$ satisfy: (1) they are invariant under the
$\R$ and $\Z_{n}$ actions on $\R\times\R/n\Z\times D$, and (2) they each lift to the same
almost complex structure $J_\infty$ on the universal covering $\R\times\R\times D$.
\end{proof}

Combining the last three statements we can prove the following.

\begin{theorem}\label{T:compact_sequences}
Let $\ubar_{n}:[S_{n},\infty)\times\R\rightarrow\R\times Z_{\infty}$ be a sequence of
$J_{\infty}$-holomorphic maps over $n\in\N$, where each $\ubar_{n}$ is a lift of a solution $\u_{n}$
to (\ref{E:hol_curve_equation_with_boundary_4}).  Suppose that $\ubar_{n}(0,0)$ is uniformly bounded in
$n$ and that $E_{\lambda}(\u_n)<\infty$ for each $n\in\N$,
and that $\lim_{n\rightarrow\infty}E_{\omega}(\u_n)=0$.  Then there exists a subsequence
$\{\u_{n_{j}}\}_{j\in\N}$ such that $\ubar_{n_{j}}$ converges in
$C^{\infty}_{\loc}(\C,\R\times Z_{\infty})$
to a $J_\infty$-holomorphic map $\u_{\infty}$ having domain either $\Sigma=\C$ or
$\Sigma=[S,\infty)\times\R\subset\C$ for some $S\in(-\infty,0]$.  Moreover $\u_\infty$
takes the following form:
there exist constants $a_{0},\tau_{0}\in\R$ such that
\begin{equation}\label{E:limit_map}
 \begin{aligned}
		  \u_\infty:&\Sigma\rightarrow\R\times\R\times D\\
		\u_{\infty}(s,t)&=(s+a_{0},t+\tau_{0},\gamma(t))
 \end{aligned}
\end{equation}
for all $(s,t)\in\Sigma$, where $\gamma\in C^\infty(\R, D)$ satisfies
\begin{align}\label{E:trajectory}
			\gammadot(t)=X_{H_{t+\tau_{0}}}(\gamma(t))
\end{align}
for all $t\in\R$.
\end{theorem}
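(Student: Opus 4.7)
The plan is to assemble the limit in three stages: extract a $C^\infty_{\loc}$ convergent subsequence using the uniform derivative bounds already established, identify the domain of the limit depending on whether the boundary parameters $S_n$ stay bounded, and finally read off the rigid form (\ref{E:limit_map}) from the structural Lemma \ref{L:form_of_curves_with_bounded_grad} combined with the vanishing of the $\omega$-energy.

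First, I would apply Proposition \ref{T:uniform_grad_bounds} to conclude $\|\nabla\u_n\|_{L^\infty}\leq C$ uniformly in $n$. Since the metric $dx^2+dy^2+d\tau^2+da^2$ is pulled back from $Z_n$ to $Z_\infty$ isometrically under the covering, the same bound holds for the lifts $\ubar_n$. Corollary \ref{C:uniform_global_bds_in_Cinfty} then upgrades this to uniform $C^k$ bounds on $\nabla\ubar_n$ for every $k$; the corollary is stated for $\u_n$, but its proof goes through the local statement Theorem \ref{T:grad_bds_implies_Cinfty_bds} applied to charts in $\R\times\R\times D$, so the same estimate holds for $\ubar_n$. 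Combined with the uniform bound on the basepoint $\ubar_n(0,0)$, Arzel\`a-Ascoli and a diagonal argument produce a subsequence converging in $C^\infty_{\loc}$ on any exhaustion by compact sets of $\C$. Passing to a further subsequence we may assume $S_n\to S\in[-\infty,0]$, so the natural domain of convergence is $\Sigma=\C$ if $S=-\infty$ and $\Sigma=[S,\infty)\times\R$ otherwise; in the latter case the limit automatically inherits a totally real boundary condition.

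Next, I would use the structural results from Section \ref{S:compactness}. Since $E_\lambda(\u_n)<\infty$, Lemma \ref{L:finite_lambda_energy_implies_grad_a_bounded} together with Lemma \ref{L:form_of_curves_with_bounded_grad} expresses each lift as
\[
    \ubar_n(s,t)=(s+a_n,\,t+\tau_n,\,z_n(s,t)),
\]
where $z_n$ satisfies the Floer equation $\partial_s z_n+i(\partial_t z_n-X_H(t+\tau_n,z_n))=0$. The uniform bound on $\ubar_n(0,0)$ immediately forces $a_n$ and (the real lifts of) $\tau_n$ to stay bounded, so after a further subsequence $a_n\to a_0$ and $\tau_n\to\tau_0$. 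Passing $n\to\infty$ in $C^\infty_{\loc}$ gives a limit of the form $\u_\infty(s,t)=(s+a_0,t+\tau_0,z_\infty(s,t))$, where $z_\infty$ satisfies the corresponding Floer equation with $\tau_0$ in place of $\tau_n$.

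Finally, I would invoke the hypothesis $E_\omega(\u_n)\to 0$. By Lemma \ref{L:nice_form} and the identity immediately following it, the $\omega$-energy of each $\u_n$ equals the Floer energy
\[
    E_\omega(\u_n)=\tfrac{1}{2}\int\bigl(|\partial_s z_n|^2+|\partial_t z_n-X_H(t+\tau_n,z_n)|^2\bigr)\,ds\,dt.
\]
By Fatou, both integrands of $z_\infty$ vanish identically, so $\partial_s z_\infty\equiv 0$ and $\partial_t z_\infty=X_H(t+\tau_0,z_\infty)$. Thus $z_\infty(s,t)=\gamma(t)$ for a trajectory $\gamma$ satisfying (\ref{E:trajectory}), which is exactly (\ref{E:limit_map}). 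The main obstacle is the bookkeeping in the second step: the parameters $\tau_n$ live a priori in $\R/n\Z$ with $n\to\infty$, and one has to ensure the right lift is chosen so that $\tau_n\to\tau_0\in\R$; this is handled by using the uniform bound on the basepoint $\ubar_n(0,0)$ in the universal cover, together with the fact that the linear form of the $\tau$-component from Lemma \ref{L:form_of_curves_with_bounded_grad} determines the lift uniquely once the basepoint is fixed.
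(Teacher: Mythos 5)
Your proposal is correct and follows essentially the same route as the paper's own proof: uniform gradient bounds via Proposition \ref{T:uniform_grad_bounds}, bootstrap to $C^k$ bounds via Corollary \ref{C:uniform_global_bds_in_Cinfty}, Arzel\`a--Ascoli to extract a $C^\infty_{\loc}$-limit whose domain is controlled by the limit of $S_n$, the rigid form of $a$ and $\tau$ from Lemma \ref{L:form_of_curves_with_bounded_grad}, and finally the vanishing of $E_\omega$ in the limit (Fatou) to force the disk component to be a Hamiltonian trajectory. The added remarks about the covering metric and the choice of lift for $\tau_n$ are valid clarifications but do not change the argument.
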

\begin{proof}
Taking a subsequence we may assume that $\u_{n_{j}}(0,0)$ converges, and that
$S_{n_{j}}$ converges to some $S\in(-\infty,0]\cup\{-\infty\}$.
From proposition \ref{T:uniform_grad_bounds} and corollary
\ref{C:uniform_global_bds_in_Cinfty} we obtain uniform bounds on $\|\nabla\ubar_{n}\|_{C^{k}}$
for each $k\in\N$, and therefore also $C^{0}$-bounds on $\ubar_{n}$ on compact subsets, uniform in $n$.
Repeated use of the Arzela-Ascoli theorem yields a subsequence converging uniformly with all
derivatives on each compact subset of $\C$ to a smooth map
$\u_{\infty}:\Sigma\rightarrow\R\times Z_{\infty}$ where $\Sigma=[S,\infty)\times\R$ if $S$ is
finite and $\Sigma=\C$ otherwise.
From lemma \ref{L:form_of_curves_with_bounded_grad} each
map in the sequence $\ubar_{n_{j}}$ takes the form
\[
					\ubar_{n_{j}}(s,t)=(s+a_{j},t+\tau_{j},z_{j}(s,t))
\]
for constants $a_{j},\tau_{j}\in\R$, with $z_{j}:[S_{n_{j}},\infty)\times\R\rightarrow D$ satisfying
\[
				\partial_sz_{j}(s,t)+i\Big(\partial_tz_{j}(s,t)-X_H(t+\tau_j,z_{j}(s,t))\Big)=0
\]
for all $(s,t)\in[S_{n_{j}},\infty)\times\R$.  Therefore $\u_{\infty}$ takes the form
\[
			\u_{\infty}(s,t)=(s+a_{0},t+\tau_{0},z_{\infty}(s,t))
\]
for constants $a_{\infty},\tau_{\infty}\in\R$ and some
$z_{\infty}:\Sigma\rightarrow D$ satisfying
\[
		\partial_sz_{\infty}+i\Big(\partial_tz_{\infty}-X_H(t+\tau_\infty,z_{\infty})\Big)=0.
\]
Let $\omega_{\infty}:=dx\wedge dy+d\tau\wedge dH$ on $Z_{\infty}$.   Then
\[
	0\leq\int_{\R^{2}}\u_{\infty}^{*}\omega_{\infty}\leq\lim_{j\rightarrow\infty}E_{\omega}(\u_{n_{j}})=0.
\]
Thus
\begin{align*}
	&\frac{1}{2}\iint\left|\frac{\partial z_\infty}{\partial s}(s,t)\right|^{2}+
	\left|\frac{\partial z_\infty}{\partial t}(s,t)-X_{H}(t+\tau_\infty,z_{\infty}(s,t))\right|^{2}dsdt=\\
	&\hspace{3in}	\int_{\R^{2}}\u_{\infty}^{*}\omega_{\infty}=0.
\end{align*}
Hence $z_{\infty}(s,t)=\gamma(t)$ for some solution $\gamma:\R\rightarrow D$ to (\ref{E:trajectory}).
\end{proof}

\section{Construction of the finite energy foliations}\label{S:pf_of_fef}

In this final section we give a terse proof of theorem
\ref{T:existence_and_uniqueness_of_folns_for_pseudorotations}.
The approach is along standard lines, the only
part that has some small surprise is due to the presense of the boundary of the almost complex manifold.
A more general construction will appear in \cite{Bramham}.

We will assume more familiarity with terminology from \cite{BEHWZ_SFT_compactness} than 
elsewhere in this article, and with the homotopy invariant generalized intersection number for punctured
pseudoholomorphic curves in \cite{S_intersections}.

We will prove the statement of theorem
\ref{T:existence_and_uniqueness_of_folns_for_pseudorotations} for $n=1$ only, as the proof of the
general case is the same.\footnote{For maps more general than irrational pseudo-rotations the proof
for $n>1$ is a little more involved as the almost complex structure has additional symmetry
which makes transversality less obvious.  But for irrational pseudo-rotations automatic
transversality suffices and the same proof works for all $n\geq1$.}
We begin then by recalling the statement of theorem 
\ref{T:existence_and_uniqueness_of_folns_for_pseudorotations} when $n=1$.

\begin{theorem}\label{T:existence_and_uniqueness_of_folns_for_pseudorotations_appendix}
Let $H\in C^{\infty}(\R/\Z\times D,\R)$ be a Hamiltonian generating an irrational
pseudo-rotation $\varphi$.  Let $(Z=\R/\Z\times D,R)$ be the corresponding Hamiltonian
mapping torus.  Let $\gamma:\R/\Z\rightarrow Z$ be the unique $1$-periodic
orbit of $R$ for which $\gamma(0)\in\{0\}\times D$.  Assume $H$ was chosen so that $\gamma(t)=(t,0)$
for all $t\in\R/\Z$.  Let $\alpha:=\Rot(\varphi;H)\in\R$, which is necessarily irrational.

Then there exist two foliations $\F^+,\F^-$ of $\R\times Z$ by smoothly
embedded surfaces with the following properties:
\begin{itemize}
 \item \textbf{Cylinder leaf:} The cylinder $C:=\R\times\gamma(\R/\Z)\subset\R\times Z$ is a leaf in both
 $\F^+$ and $\F^-$.
 \item \textbf{Pseudo-holomorphic:} If $F\in\F^+$ (resp. $F\in\F^-$) is not $C$, then $F$ is parameterized by
 a solution $\u$ to (\ref{E:hol_curve_equation_1}) with respect to $J_{n}$ as in 
 (\ref{E:defn_of_Jtilde}), with $E_\lambda(\u)+E_\omega(\u)<\infty$
 and boundary index $\lfloor \alpha\rfloor$  (resp. $\lceil \alpha\rceil$).
 \item \textbf{$\R$-invariance:} If $F\in\F^+$ (resp. $F\in\F^-$) is a leaf and $c\in\R$, the set
 $F+c:=\{(a+c,\tau,z)\,|\,(a,\tau,z)\in F \}$ is also a leaf in $\F^+$  (resp. in $\F^-$).
 \item \textbf{Uniqueness:} $\F^+$ and $\F^-$ are uniquely determined by the above properties.
 \item \textbf{Smooth foliation:} $\F^+$ and $\F^-$ are $C^\infty$-smooth foliations at each point on the
  complement of $C$.
\end{itemize}
\end{theorem}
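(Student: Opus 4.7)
The plan is to construct $\F^+$ as the $\R$-invariant completion of a one-parameter family of embedded finite-energy $J$-holomorphic half-cylinders, by setting up a Fredholm problem, establishing compactness modulo $\R$-translation, and using positivity of intersections to obtain the foliation property. The construction of $\F^-$ is entirely symmetric.

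\textbf{Step 1: Fredholm setup and index.} Let $\M^+$ denote the moduli space of equivalence classes (modulo biholomorphic reparameterization of the domain) of solutions $\u$ to (\ref{E:hol_curve_equation_1}) on $\R^+\times\R/\Z$ with boundary index $\lfloor\alpha\rfloor$ and asymptotic, as $s\to+\infty$, to the cylinder $C$ over the unique periodic orbit $\gamma$, which is non-degenerate by the Poincar\'e-Birkhoff argument recalled in Appendix B. Working in weighted Sobolev spaces with small exponential weight at the puncture, the linearized Cauchy-Riemann operator is Fredholm. A Riemann-Roch calculation, keeping track of the Conley-Zehnder index of $\gamma$ relative to the trivialization induced by the boundary winding $\lfloor\alpha\rfloor$, the Maslov index of the totally real boundary loop $L_c$, and the topology of a half-cylinder with one puncture, yields virtual dimension $2$: one dimension for $\R$-translation, one transverse parameter sweeping out a $1$-parameter family of leaves.

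\textbf{Step 2: Transversality, existence, compactness.} Since the curves are embedded, have genus zero, one boundary component and one puncture, automatic transversality in the sense of Wendl applies, so $\M^+$ is a smooth $2$-manifold with free $\R$-action. For existence, deform through a generic path of Hamiltonians from a model Hamiltonian (e.g.\ a rotation or small perturbation thereof, where the foliation can be written down by hand) to the given $H$; the compactness result of Theorem \ref{T:compact_sequences} in the cobordism setting, together with the index being $2$, yields at least one non-trivial leaf for $H$. Compactness of $\M^+/\R$ then follows from SFT compactness \cite{BEHWZ_SFT_compactness}: by Lemma \ref{L:formula_1} the $\omega$-energy equals $\{\alpha\}\pi$, and by Lemma \ref{L:nice_form} the $\lambda$-energy equals $1$, so total energy is fixed. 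The only closed orbit is $\gamma$ (and its iterates, all non-degenerate), so possible broken or nodal limits are tightly constrained; dimension, energy, and winding considerations rule out every breaking except the degeneration onto the orbit cylinder $C$, which provides one end of $\M^+/\R$.

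\textbf{Step 3: Foliation and uniqueness.} Using Siefring's intersection theory for punctured pseudoholomorphic curves \cite{S_intersections}, the generalized intersection number of two distinct elements of $\M^+$ is computed from their common asymptotic data at $\gamma$ and their boundary homology in $\partial Z$ and is shown to vanish. Positivity of intersections then forces distinct leaves in $\M^+$ to be pairwise disjoint, and likewise disjoint from $C$ off of the asymptotic limit. Combined with compactness of $\M^+/\R$ (with $C$ at one end) and openness of the union of leaves (from the implicit function theorem at each leaf), a connectedness argument shows that the leaves together with $C$ exhaust $\R\times Z$, producing the foliation. Uniqueness follows by the same positivity argument applied to any putative alternative foliation satisfying the listed properties.

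\textbf{Main obstacle.} The hardest step is the compactness analysis in Step 2: ensuring that no non-trivial building in the SFT limit appears beyond the orbit cylinder over $\gamma$. The totally real boundary condition on the $L_c$'s introduces possible boundary bubbling and constrains holomorphic building components in a way less standard than the closed or cylindrical cases; ruling these out requires a careful accounting of action levels combined with the boundary winding index. A secondary subtlety lies in choosing the asymptotic weight at $\gamma$ so that the Fredholm problem is transversally cut out \emph{and} the correct one-parameter family is recovered at the $C$-end of $\M^+/\R$.
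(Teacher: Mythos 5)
Your overall strategy (Fredholm problem of index $2$, automatic transversality, a homotopy from a model Hamiltonian, SFT compactness, then Siefring's intersection theory) is in the same spirit as the paper, but the two steps you flag as hard are precisely where your sketch is missing the paper's key ideas, and as written those gaps are not minor.

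First, the cobordism/homotopy argument does not close up because of the boundary circle map. The explicit model foliation on the positive end requires the boundary Reeb dynamics there to be a literal rigid rotation, and for the interpolating almost complex structure $\Jhat$ on $W=\R\times Z$ to be tame one needs a $\Jhat$-holomorphic filling $\S$ of $\R\times\partial Z$. The paper constructs this filling by choosing a smooth path $a\mapsto g_a$ conjugating $R_{2\pi\alpha}|_{\partial D}$ to $\varphi|_{\partial D}$; this requires $\varphi|_{\partial D}$ to be $C^\infty$-conjugate to a rotation. For a general smooth irrational pseudo-rotation this \emph{fails} (Liouville rotation numbers). The paper's resolution (Section \ref{S:pf_without_bc}) is a separate density argument: using Herman's theorem, $C^\infty$-perturb $H$ near $\R/\Z\times\partial D$ so the boundary rotation number becomes Diophantine, carry out the construction for each perturbation, and take a further limit of foliations as the perturbation dies. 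Your proposal omits this step entirely; without it, your homotopy from ``a rotation or small perturbation thereof'' simply cannot reach the given $H$ at the boundary.

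Second, you name boundary bubbling as ``the main obstacle'' but offer no mechanism to rule it out. The paper's mechanism is exactly the holomorphic filling $\S$ mentioned above (Sections \ref{S:foliating_boundary} and \ref{S:remarks_on_compactness}): because $\R\times\partial Z$ is fibered by immersed $\Jhat$-curves, positivity of intersections forbids interior components of the limiting building touching $\partial W$ and forbids tangencies of the boundary of the limit with $\partial W$, and the homology class of a boundary component together with $F_k\cdot C=0$ rules out disk bubbles. That argument is the crux; without articulating it your Step 2 does not establish compactness.

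A smaller but real issue: you cannot invoke Lemma \ref{L:formula_1} for the energy bound during the construction---that lemma is proved \emph{assuming} the foliation exists. What is available during the construction is Stokes' theorem applied to individual curves with their fixed asymptotic orbit and fixed boundary homology class, which gives a uniform total-energy bound directly; the paper uses exactly that (``there is a uniform bound on the total energy of all curves in $\M$''). Similarly, Theorem \ref{T:compact_sequences} concerns the regime $n\to\infty$ with $\omega$-energy $\to 0$, which is not the compactness statement needed to run the cobordism argument at fixed $n=1$.
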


First we explain the construction of $\F^{+}$, and then how the approach to
constructing $\F^{-}$ differs.

\subsection{Construction of $\F^{+}$ when $\varphi|_{\partial D}$ is conjugate to a rigid rotation}\label{S:proof_with_bc}
In this section suppose that there exists $g\in\Diff^{\infty}_{+}(\partial D)$ so that
\begin{equation}
					\sigma\circ\varphi|_{\partial D}\circ\sigma^{-1}=R_{2\pi\alpha}
\end{equation}
where $R_{2\pi\alpha}:\partial D\rightarrow\partial D$ is the rigid rotation $z\mapsto ze^{2\pi i\alpha}$.

Let $H_-,H_+\in C^\infty(Z,\R)$ be as follows.  $H_{-}=H$ where $H$ is as in the statement of 
theorem \ref{T:existence_and_uniqueness_of_folns_for_pseudorotations_appendix}.  In particular the 
corresponding closed loop of Hamiltonians on the disk $H_-^t:=H_-(t,\cdot):D\rightarrow\R$ has 
time-one map $\varphi$, and $\Rot(\varphi,H_-)=\alpha\in\R$.  
Define $H_+$ by
\begin{equation}\label{E:constant_in_Hplus}
 			  H_+(\tau,z):=\pi\alpha|z|^2 + C
\end{equation}
for some constant $C\leq0$ chosen so that $\max H_+<\min H_-$.  Then the two pairs of differential forms
$\Ham_{\pm}=(\omega_{\pm},\lambda_{\pm})$, on $Z$, given by
\[
 	\omega_{\pm}=dx\wedge dy + d\tau\wedge dH_{\pm}\qquad \lambda_{\pm}=d\tau
\]
define stable Hamiltonian structures on $Z$.  Let $R_+$ and $R_-$ denote the associated
(stable Hamiltonian) Reeb vector fields, defined by $\omega_{\pm}(R_\pm,\cdot)=0$
and $\lambda_\pm(R_\pm)=1$.  These are found to be
\[
		      R_\pm(\tau,z)=\partial_\tau + X_{H_\pm^\tau}(z)
\]
where $X_{H_\pm^\tau}(z)$ is the Hamiltonian vector field associated to
$H_\pm^\tau=H_\pm(\tau,\cdot):D\rightarrow\R$ and the symplectic form $\omega_0=dx\wedge dy$.
Choose a function $H\in C^\infty(\R\times Z,\R)$ interpolating between $H_-$ and $H_+$ so that
\[
	\left\{\begin{aligned}
			&H(a,m)=H_{+}(m)& &\mbox{for all }a\geq1\\
			&\partial_{a}H(a,m)<0& &\mbox{for } -1<a<1\\
			&H(a,m)=H_{-}(m)& &\mbox{for all }a\leq-1.
		\end{aligned}\right.
\]
For example a good choice is $H(a,m)=\chi(a)H_{+}(m)+(1-\chi(a))H_{-}(m)$ for some 
$\chi\in C^{\infty}(\R,[0,1])$ with $\chi\equiv0$ on $(-\infty,-1]$ and
$\chi\equiv1$ on $[1,\infty)$ and such that $\chi'(a)>0$ for all $a\in(-1,1)$.  
 
Define an almost complex structure $\Jhat$ on $W:=\R\times Z$ by
\[
 	\left\{\begin{aligned}
		&\Jhat(a,\tau,z)\partial_\R=\partial_\tau + X_{H_{a}^{\tau}}(z) \\
		&\Jhat|_{TD}=i
		\end{aligned}\right.
\]
where for $(a,\tau)\in\R\times\R/\Z$, $X_{H_{a}^{\tau}}$ is the Hamiltonian vector
field on $(D,\omega_0=dx\wedge dy)$ for the Hamiltonian function
$H_{a}^{\tau}:=H(a,\tau,\cdot):D\rightarrow\R$.
Then $(W,\Jhat)$ is an almost complex manifold with cylindrical ends
$E_+=[1,\infty)\times Z$  and $E_-=(-\infty,-1]\times Z$, adjusted to the stable
Hamiltonian structures  $\Ham_{\pm}=(\omega_{\pm},\lambda_{\pm})$ on these ends, while
on the region $(-1,1)\times Z$, $\Jhat$ tames the symplectic $2$-form
\[
		      \Omega:=dx\wedge dy + d\tau\wedge dH.
\]
Moreover, the cylinder $C:=\R\times\gamma(\R/\Z)$ is a $\Jhat$-holomorphic curve, even though 
$\Jhat$ is not everywhere $\R$-invariant.  This is because the unique $1$-periodic orbits of 
$R_+$ and $R_-$ are the same as parameterized closed loops, and the expression for $H$ in terms 
of $H_{-},H_{+},\chi$.

Finally, we will show in section \ref{S:foliating_boundary} that the boundary of $W$, that is 
$\R\times\partial Z$, is filled by a set $\S$ of immersed $\Jhat$-holomorphic planes which, 
in the ends $E_\pm\cap(\R\times\partial Z)$ coincide with the product of the $\R$-component 
and a Reeb trajectory of $R_{\pm}$.  \\

\noindent We proceed to construct $\F^+$ in four steps.

\textbf{Step 1:} The almost complex structure $\Jhat$ on $W$ satisfies $\Jhat|_{E_+}=J_+|_{E_+}$
where $J_+$ is the cylindrical almost complex structure
\begin{equation}\label{E:acs_on_positive_end}
	\left\{\begin{aligned}
				&J_+\partial_{\R}=\partial_\tau+2\pi\alpha\partial_{\theta}\\
				&J_+|_{TD}=i
		\end{aligned}\right.
\end{equation}
on $\R\times Z$, in standard polar coordinates $(r,\theta)$ on the disk.
For each $c\in\R$ and $z\in\partial D$, the map
\begin{align}
	\u_{c,z}:&\R^{+}\times\R/\Z\rightarrow\R\times Z\label{E:ucz}\\
  	\u_{c,z}(s,t)&=(s+c,t,ze^{2\pi(\lfloor\alpha\rfloor-\alpha)s}e^{2\pi i\lfloor\alpha\rfloor t})\nonumber
\end{align}
is $J_{+}$-holomorphic.  The combined images of these maps along with
the cylinder $C:=\{(a,\tau,0,0)\,|\,a\in\R,\ \tau\in\R/\Z\}$ defines an
$\R$-invariant finite energy foliation for $(\R\times Z,J_{+})$ with boundary index
$\lfloor\alpha\rfloor$.  This is the model foliation from which we homotope.

\textbf{Step 2:}
We return to the manifold $(W=\R\times Z,\Jhat)$ with cylindrical ends.  Let $\M$ denote the moduli space
of all finite energy $\Jhat$-holomorphic curves $F\subset W$ which admit a $\Jhat$-holomorphic
parameterization by a map $\u=(a,\tau,z)\in C^{\infty}(\R^{+}\times\R/\Z,\R\times Z)$ satisfying
\[
 \left\{\begin{aligned}
  		& \u(0,\cdot)\in L_c\mbox{ and }\u\mbox{ meets }\R\times\partial Z\mbox{ transversely}\\
  		& \tau(0,\cdot):\R/\Z\rightarrow\R/\Z \mbox{ has degree }+1\\
		& z(0,\cdot):\R/\Z\rightarrow\partial D \mbox{ has degree }\lfloor\alpha\rfloor\\
 \end{aligned}\right.
\]
for any $c\in\R$, and equip $\M$ with
the topology coming from $C^{\infty}_{\loc}\cap C^0([0,\infty)\times\R/\Z,W)$ convergence.
$\M$ is non-empty as it contains the image of each curve $\u_{c,z}$ from (\ref{E:ucz})
provided we choose $c\geq1$, as $\Jhat=J_+$ on the positive end $E_+=[1,\infty)\times Z$.

Each curve $F\in\M$ is embedded, and any two curves $F_1,F_2\in\M$ are equal or disjoint.
Indeed, any two curves in $\M$ are homotopic through half cylinders with so called asymptotically
cylindrical ends (in the sense of \cite{S_intersections}).  
It therefore suffices to know that there exists a single curve $F_0\in\M$ satisfying
\[
		  F_0\cdot F_0=0 \qquad\mbox{and}\qquad F_0\cdot C=0
\]
where $C$ is the cylinder above.  This is easily verified for the explicit
curves in (\ref{E:ucz}).  Then homotopy invariance of $\cdot$ implies
that for any $F\in\M$ we have $F\cdot F=0$ and $F\cdot C=0$.
These imply $F$ is embedded by
the adjunction formula in \cite{S_intersections}.  Finally, for any two curves $F_1,F_2\in\M$ we have
$F_1\cdot F_2=F_1\cdot F_1=0$ and so either $F_1=F_2$ or $F_1\cap F_2=\emptyset$.

As a solution to the Cauchy-Riemann equations with ``free''
boundary conditions (meaning that $c\in\R$ above is not fixed) each
curve in $\M$ is Fredholm with index $2$.  Standard automatic
transversality arguments apply as for example in \cite{HWZ_propIII,Wendl_ot} and so
each curve is Fredholm regular despite the non-generic choice of almost complex structure.

\begin{lemma}
The set $\mathcal{E}:=\{w\in W\,|\, w\in F\, \mbox{for some }F\in\M\}$ is an open and closed subset
of $W\backslash C$.
\end{lemma}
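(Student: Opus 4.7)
The plan is to establish the two halves of the statement by standard means: openness via Fredholm regularity plus the implicit function theorem, and closedness in $W\setminus C$ via SFT compactness, intersection theory, and the asymptotic constraints enforced by the fact that $\gamma$ is the unique periodic orbit of $R_\pm$ in $Z$.

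\textbf{Openness.} Fix $F_0\in\M$ and a point $w_0\in F_0$. Since $F_0$ has Fredholm index $2$ and is cut out transversally by automatic transversality (embedded genus zero curve with one boundary component), the implicit function theorem produces a smooth $2$-parameter family $\{F_\sigma\}_{\sigma\in U}\subset\M$, $U\subset\R^2$ open, with $F_\sigma$ depending smoothly on $\sigma$. By the intersection-theoretic facts already established ($F\cdot F=0$ and $F_1\cdot F_2=0$ for $F,F_1,F_2\in\M$) the curves $F_\sigma$ are pairwise disjoint and embedded, so the evaluation map $U\times(\R^+\times\R/\Z)\to W$, $(\sigma,s,t)\mapsto \u_\sigma(s,t)$, has everywhere surjective differential near $(\sigma_0,s_0,t_0)$ parameterizing $w_0$. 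Thus its image contains an open neighbourhood of $w_0$ in $W$, and so $\mathcal{E}$ is open.

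\textbf{Closedness in $W\setminus C$.} Let $w_n\to w$ in $W\setminus C$ with $w_n\in F_n\in\M$. The topological data (asymptotic orbit $\gamma$ at the positive end, boundary index $\lfloor\alpha\rfloor$, relative homology class) determines the energy bounds
\[
E_\lambda(F_n)+E_\omega(F_n)\le K<\infty
\]
independent of $n$, so the compactness theorem of \cite{BEHWZ_SFT_compactness} applies: after passing to a subsequence, the $F_n$ converge (in the sense of holomorphic buildings, with boundary) to a stable limit $F_\infty$. The main task is to show that $F_\infty$ is in fact a single smooth curve lying in $\M$ and passing through $w$. The component through $w$ is well-defined because $w\in W\setminus C$ lies in the limit. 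To rule out breaking in the positive end one uses that $\gamma$ is the only nondegenerate $1$-periodic orbit of $R_+$: any building level would itself have to be a finite energy curve asymptotic to $\gamma$ at both ends, and an index/energy count (combined with the normalisation $\max H_+<\min H_-$ which forces the sign of the $\Omega$-flux in the middle region) contradicts stability. To rule out sphere or disk bubbling one uses positivity of intersections and the generalised intersection number of \cite{S_intersections}: any bubble attached to the principal component would intersect it positively, violating $F_n\cdot F_n=0$ in the limit. To rule out escape of the boundary through $\R\times\partial Z$ one uses the $\Jhat$-holomorphic planes $\S$ foliating $\R\times\partial Z$ as barriers together with the transversal boundary intersection built into the definition of $\M$.

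The conclusion of the compactness analysis is that $F_\infty$ is a single smooth embedded half cylinder with the same asymptotic and boundary data as the $F_n$, hence $F_\infty\in\M$. Since $w\in F_\infty$ and $w\notin C$ by hypothesis, we obtain $w\in\mathcal{E}$, proving closedness in $W\setminus C$. The hard part will clearly be the closedness argument, specifically the case analysis excluding breaking and bubbling in the SFT limit; here the fact that the asymptotic orbit $\gamma$ is unique and nondegenerate (proved in the appendix using Poincaré--Birkhoff) together with the intersection identities $F\cdot F=F\cdot C=0$ do all the work.
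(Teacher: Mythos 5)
Your proof follows essentially the same approach as the paper: openness via the implicit function theorem and automatic transversality, closedness via SFT compactness together with uniqueness of the orbit $\gamma$, positivity of intersections, and the barrier foliation $\S$ at the boundary. The paper delegates the detailed exclusion of breaking, bubbling, and boundary escape to a separate proposition in the compactness subsection, while you inline that case analysis -- the content is the same.

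One small caution on openness: pairwise disjointness and embeddedness do not by themselves give that the evaluation map $(\sigma,s,t)\mapsto\u_\sigma(s,t)$ has surjective differential (injectivity of a smooth map between equi-dimensional manifolds does not force immersivity -- think of $x\mapsto x^3$). What saves you is either (a) invariance of domain applied to the injective evaluation map, which yields open image without addressing the differential, or (b) the fact emphasized in the paper that non-trivial sections of the kernel of the linearized Cauchy--Riemann operator have no zeros, so the transverse variation of the family is everywhere non-vanishing; this is the standard automatic-transversality mechanism that gives the $C^\infty$-smooth local foliation. Your conclusion is correct; only the specific justification of ``surjective differential'' needs one of these repairs.
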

\begin{proof}
Openness: as each curve $F$ in $\M$ is regular and transverse to the boundary of $W$, an implicit function
theorem as in \cite{HWZ_propIII} applies (or more precisely a version in \cite{Wendl_boundary}),
and all curves in $\M$ sufficiently close to $F$
correspond to sections of the kernel of the linearized Cauchy-Riemann operator at $F$.
Non-trivial elements of this kernel have no zeros.  This is enough to show that there is an
open neighborhood of $F$ filled by curves coming from the implicit function theorem which by construction
lie in $\M$.

Closedness:  there is a uniform bound on the total energy of all curves in $\M$.
Thus for any sequence $F_k\in\M$, see the discussion above, symplectic field theory compactness as in
\cite{BEHWZ_SFT_compactness} applies.  We have curves with boundary, which is not treated by
\cite{BEHWZ_SFT_compactness}, but this is not a problem due to the filling $\S$.  Suppose that
$p_k\in\R\times Z$ is a sequence of points in $\mathcal{E}$,
with $p_k\rightarrow p_\infty$ some point $p_\infty\notin C$.  Let $F_k\in\M$ be a curve containing $p_k$.
Viewing these as marked points on the curves $F_k$ we take a limit in the SFT sense.   The unique stable
limiting building $\Fbar$ has a component $F_\infty$ with a marked point, this component must contain
$p_\infty$.  It is easy to see that the component $F_\infty$ must be a half cylinder in $\M$ provided
it is not the cylinder $C$ (see section \ref{S:remarks_on_compactness} for a brief justification of this).
But $F_\infty$ cannot equal $C$ as $p_\infty\notin C$.
\end{proof}

We conclude that there is a unique curve in $\M$ going through each point
in $W\backslash C$.

\textbf{Step 3:}
Let $p_k\in W\backslash C$ be any sequence of points which converge to a point $p_\infty\in C$ on
the distinguished cylinder.  From the previous step there exists then a sequence of curves
$F_k\in\M$ with $p_k\in F_k$.  Applying the SFT compactness theorem to this sequence, where
$p_k$ is viewed as the image of a marked point of $F_k$, we get
convergence to a stable nodal holomorphic building $\Fbar$ say, with a single marked point which
corresponds to $p_\infty$ and which lies on the distinguished middle level of $\Fbar$.
It follows that the middle level of $\Fbar$ is simply the cylinder $C$ as $p_\infty$ cannot
be an isolated intersection point with $C$.  Therefore
the building $\Fbar$ must have non-empty lower levels.  These lower levels are holomorphic curves
in the cylindrical manifold $(\R\times Z,J_-)$ corresponding to the negative end of $(W,\Jhat)$, which
was modelled on the pseudo-rotation $\varphi$.
As the building is stable and has no marked points in the lower levels, there is
precisely one lower level in $\Fbar$ and it must correspond to
a half cylinder with positive puncture asymptotic to the unique simply covered periodic orbit $\gamma_-$
of the Reeb flow of $\Ham_-$.

Denote this half cylinder by $G_0$.  Intersection considerations show that
since $C\cdot F_k=0$ and $F_k\cdot F_k=0$ for all $k$, and these curves only break at elliptic orbits,
we must have
\[
		  G_0\cdot G_0=0 \qquad\mbox{and}\qquad G_0\cdot C_-=0
\]
where $C_-$ is the unique orbit cylinder in $(\R\times Z,J_-)$.  It follows as in step 2 that the
half cylinder $G_{0}$ is embedded.

\textbf{Step 4:} The embedded half cylinder $G_0$ in the cylindrical manifold $(\R\times Z,J_-)$ found at the end
of the last step is automatically Fredholm regular and has Fredholm index $2$.  We may now argue in the
manner of step 2, and conclude $G_0$ lies in a non-empty $2$-dimensional moduli space of half cylinders
$\M_-$
which fill an open and closed subset of $(\R\times Z)\backslash C_-$.  Each $G\in\M_-$ is homotopic to
$G_0$ through curves with asymptotically cylindrical ends, and we can conclude that
\[
		  G\cdot G=0 \qquad\mbox{and}\qquad G\cdot C_-=0
\]
holds for all $G\in\M_-$.  This implies that:
\begin{enumerate}
 \item Each $G\in\M_-$ is embedded.
 \item Each pair of curves $G_1,G_2\in \M_-$ is either equal or disjoint.
\end{enumerate}
The second of these also implies that if $G\in\M_-$ and $c\in\R\backslash\{0\}$, then
the curve translated in the $\R$-direction $G+c:=\{(a+c,m)\,|\,(a,m)\in G\}$ (which is
in $\M_-$ as $J_-$ is $\R$-invariant) is disjoint from $G$ because $G$ and $G+c$
cannot be equal as their boundaries are in different totally real submanifolds.

\subsection{Construction of $\F^{+}$ without boundary restrictions}\label{S:pf_without_bc}
This completes the construction for any irrational pseudo-rotation $\varphi:D\rightarrow D$
which restricts to a circle diffeomorphism $\varphi|_{\partial D}$ that is smoothly conjugate to
a rigid rotation.  
By a deep result of Herman the set of such pseudo-rotations is dense  
amongst all irrational pseudo-rotations, and so a further limiting step can remove the 
boundary restrictions.   We explain this now. 

Say that an irrational number $\alpha$ belongs to $\Diophantine$ if it satisfies the following
Diophantine condition: there exists $n\geq 2$ and $C\in(0,\infty)$ such that for all $(p,q)\in\Z\times\N$
\[
		  \left|\alpha - \frac{p}{q} \right|\geq C\frac{1}{q^n}.
\]
It is easy to show that $\Diophantine$ is dense in $\R$ (indeed of full measure).
Denote by $\Diff_{\Diophantine}(\partial D)$ those orientation preserving $C^{\infty}$-smooth 
circle diffeomorphisms that have rotation number in $\Diophantine$.

\begin{theorem}[Herman \cite{Herman}]
If $f\in\Diff_\Diophantine(\partial D)$ then there
exists $\sigma\in\Diff^\infty_+(\partial D)$ such that $\sigma^{-1}f\sigma=R$
where $R:\partial D\rightarrow\partial D$ is a rigid rotation.
\end{theorem}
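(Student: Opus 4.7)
The plan is to prove this via a KAM-type (Newton iteration with smoothing) scheme. After identifying $\partial D$ with $\R/\Z$, we want to construct $\sigma\in\Diff_+^\infty(\R/\Z)$ satisfying $f\circ\sigma=\sigma\circ R_\alpha$, where $\alpha$ must be the rotation number of $f$ (since rotation number is a conjugacy invariant). Writing $\sigma=\mathrm{id}+\eta$ and $f=R_\alpha+\psi$ with $\psi$ ``small'', the equation becomes nonlinear in $\eta$, with linearization at $\eta\equiv 0$ given by the \emph{cohomological equation}
\begin{equation*}
\eta(x+\alpha)-\eta(x)=-\psi(x).
\end{equation*}

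The key Fourier calculation: writing $\eta=\sum_{k}\hat\eta(k)e^{2\pi ikx}$, the equation becomes $\hat\eta(k)(e^{2\pi ik\alpha}-1)=-\hat\psi(k)$. Solvability at $k=0$ forces $\int\psi=0$, which can be arranged by adjusting the rotation at each step; for $k\neq 0$, the Diophantine condition $|\alpha-p/q|\geq Cq^{-n}$ yields the small-divisor estimate $|e^{2\pi ik\alpha}-1|\geq C'|k|^{1-n}$, so $\eta$ is well-defined with a controlled loss of derivatives, roughly $\|\eta\|_{C^r}\lesssim\|\psi\|_{C^{r+n}}$.

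The Newton--KAM iteration then proceeds as follows: given $\sigma_k$ with $\sigma_k^{-1}f\sigma_k=R_\alpha+\psi_k$, solve the linearized equation with right-hand side $\psi_k$ to obtain a correction $\eta_k$, and set $\sigma_{k+1}=\sigma_k\circ(\mathrm{id}+\eta_k)$. The new error $\psi_{k+1}$ is quadratic in $\psi_k$ in low norms (because the linearization is exact to first order), but involves higher derivatives of $\psi_k$. Offsetting the geometric loss of derivatives against the quadratic convergence requires Moser's smoothing technique: truncate high Fourier modes of $\psi_k$ at scales $N_k\to\infty$ chosen so that $\psi_k\to 0$ in every $C^r$. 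Standard KAM bookkeeping then yields a sequence $\sigma_k\to\sigma$ converging in $C^\infty$, provided the initial perturbation $\psi_0$ is sufficiently small in some fixed $C^{r_0}$ norm.

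The main obstacle is \emph{globalization}: the scheme above only works when $f$ is already $C^{r_0}$-close to a rotation, whereas the theorem asserts a global result for arbitrary smooth $f$ with Diophantine rotation number. Bridging this gap is the hard part. Herman's original approach extracts sharp a priori $C^k$ bounds on the (unique, a priori only continuous) conjugacy provided by Denjoy's theorem, exploiting the Diophantine condition together with continuity properties of the conjugation operation $h\mapsto h^{-1}fh$ in suitable Fr\'echet topologies. An alternative, due to Yoccoz, proceeds by renormalization: iterating a suitable renormalization operator carries $f$ to maps that are progressively closer to the rigid rotation, at which point the local KAM scheme can be started. Either globalization strategy, combined with the local iteration sketched above, produces the smooth conjugacy $\sigma$.
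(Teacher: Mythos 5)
The paper does not prove this statement; it is cited as an external result (Herman's 1979 theorem on smooth linearization of circle diffeomorphisms with Diophantine rotation number) and used as a black box in section \ref{S:pf_without_bc}, so there is no internal proof to compare your sketch against.

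As a roadmap, what you write is the correct skeleton of the known proofs: identify the target rotation with the rotation number of $f$, observe that the linearized conjugacy equation is the cohomological equation whose small divisors $|e^{2\pi i k\alpha}-1|$ are controlled by the Diophantine condition with a fixed loss of derivatives, and run a Moser--Newton iteration with smoothing to get a local ($C^{r_0}$-close-to-rotation) linearization theorem. You also correctly identify that this local scheme is not the theorem, and that the entire difficulty lies in globalization. But that is precisely the step you leave as a pointer: \emph{either} Herman's a priori $C^k$ estimates on the Denjoy conjugacy via Fr\'echet-space continuity of the conjugation map, \emph{or} Yoccoz's renormalization argument. Those are not minor technicalities to be filled in; they are the content of the theorem (Herman's original treatment runs to several hundred pages, and the renormalization route is likewise substantial). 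In particular, the a priori estimates require delicate control of the growth of derivatives of $f^{q_n}$ along the continued-fraction denominators $q_n$ of $\alpha$, and nothing in your write-up gestures at how that control is obtained. So while the plan is structurally correct and the small-divisor analysis is right, the proposal does not constitute a proof: its core step is named but entirely absent. In the context of the present paper this is not a defect in your understanding of the logical structure, since the paper itself takes Herman's theorem on trust, but you should be explicit that you are recording a dependency on a deep external theorem rather than proving it.
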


\begin{lemma}
$\Diff_{\Diophantine}(\partial D)$ is dense in $\Diff^\infty_+(\partial D)$ with the $C^\infty$-topology.
\end{lemma}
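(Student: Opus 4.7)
The plan is to perturb $f$ by post-composing with a small rigid rotation and to use the fact that the rotation number varies continuously and monotonically in this parameter, together with the fact that the Diophantine numbers have full Lebesgue measure on the circle.

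Given $f \in \Diff^\infty_+(\partial D)$, for each $t \in \R$ define
\[
  f_t := R_{2\pi t} \circ f \in \Diff^\infty_+(\partial D).
\]
The family $t \mapsto f_t$ is smooth in $t$, and as $t \to 0$ we have $f_t \to f$ in every $C^k$-norm; in particular $f_t \to f$ in the $C^\infty$-topology as $t \to 0$.

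Next I would invoke the classical fact, recalled in section \ref{S:pseudo-rotations}, that for any orientation preserving $C^\infty$-diffeomorphism $g$ of $\partial D$ and any lift $\tilde g : \R \to \R$ via $\pi(x) = e^{2\pi i x}$, the translation number
\[
  \tau(\tilde g) = \lim_{n\to\infty} \frac{\tilde g^n(x) - x}{n}
\]
exists, is independent of $x$, and depends continuously on $\tilde g$ in the $C^0$-topology. Fixing a lift $\tilde f : \R \to \R$ of $f$ and taking the lift $\tilde f_t(x) := \tilde f(x) + t$ of $f_t$, the map $t \mapsto \tau(\tilde f_t)$ is continuous and monotone non-decreasing in $t$, and satisfies $\tau(\tilde f_{t+1}) = \tau(\tilde f_t) + 1$. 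Hence its image is all of $\R$, and in particular for any $\epsilon > 0$ the image of $[-\epsilon,\epsilon]$ under $t \mapsto \tau(\tilde f_t)$ contains a non-degenerate interval $I_\epsilon \subset \R$.

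The set $\Diophantine$ has full Lebesgue measure in $\R$, so in particular $I_\epsilon \cap \Diophantine \neq \emptyset$. Picking $t_\epsilon \in [-\epsilon,\epsilon]$ with $\tau(\tilde f_{t_\epsilon}) \in \Diophantine$ yields $f_{t_\epsilon} \in \Diff_\Diophantine(\partial D)$, and letting $\epsilon \to 0$ gives a sequence of Diophantine diffeomorphisms converging to $f$ in $C^\infty$. The only mildly delicate point is the continuity of $\tau$; this is standard, but if one wants to avoid it altogether one can simply use the observation that $t \mapsto \tau(\tilde f_t)$ is monotone with $\tau(\tilde f_{t+1}) - \tau(\tilde f_t) = 1$, which forces its image to contain an interval of length at least $2\epsilon$ inside every $[-\epsilon,\epsilon]$ (up to small continuity corrections) and hence to meet $\Diophantine$.
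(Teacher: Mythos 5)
There is a genuine gap in your argument at the step where you assert that the image of $[-\epsilon,\epsilon]$ under $t\mapsto\tau(\tilde{f}_t)$ contains a non-degenerate interval. Continuity, monotonicity, and $\tau(\tilde{f}_{t+1})=\tau(\tilde{f}_t)+1$ only give that the image over all of $\R$ is $\R$; they say nothing about the image of a small interval around $t=0$. The function $t\mapsto\Rot(R_{2\pi t}\circ f)$ is a devil's staircase: it is constant on a dense union of non-degenerate subintervals (the Arnold tongues, or mode-locking plateaus). If $f_0=f$ has rational rotation number realized by hyperbolic periodic orbits — the generic rational case — then $\Rot(f_t)$ is \emph{identically} $\Rot(f_0)$ on a whole interval of $t$ around $0$, so the image of $[-\epsilon,\epsilon]$ is a single point and never meets $\Diophantine$. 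Your fallback argument at the end has exactly the same flaw. The missing ingredient, and the step the paper's proof actually turns on, is that when $\Rot(f_0)$ is \emph{irrational} the rotation number is strictly increasing at $t=0$: $\Rot(f_t)>\Rot(f_0)$ for every $t>0$ (see Proposition 11.1.9 in \cite{Katok_H}). This is special to irrational rotation numbers and is what lets one conclude that the image of $(0,\epsilon]$ contains a non-degenerate interval, and hence a Diophantine number.

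This also exposes that the lemma as literally stated is too strong: $\Diff_\Diophantine(\partial D)$ is \emph{not} $C^\infty$-dense in all of $\Diff^\infty_+(\partial D)$, since Morse--Smale circle diffeomorphisms form a $C^1$-open set whose members keep their rational rotation number under small perturbation. What the proof establishes, and what the application actually needs, is density of $\Diff_\Diophantine(\partial D)$ among circle diffeomorphisms with irrational rotation number; since $\varphi|_{\partial D}$ has no periodic points, this suffices for the construction in section \ref{S:pf_without_bc}. Your write-up should either restrict to $f$ with irrational rotation number and invoke strict monotonicity there, or note explicitly that only such $f$ arise.
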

\begin{proof}
Fix any $f\in\Diff_{\Diophantine}(\partial D)$.  Consider the continuous path
$f_t:=R_{2\pi t}\circ f\in\Diff^\infty_+(\partial D)$ over $t\in[0,1/2]$.  The rotation
numbers $\Rot(f_t)$ vary continuously with $t$.  Moreover, there is a continuous family of lifts
$\ftilde_t:\R\rightarrow\R$ with the monotonicity property that for $t>0$,
$\ftilde_t(x)>\ftilde_0(x)$ for all $x\in\R$.  Therefore, since $\Rot(f_0)$ is \emph{irrational},
it follows that for all $t\in(0,1/2]$, $\Rot(f_t)>\Rot(f_0)$.  See for example proposition
11.1.9 in \cite{Katok_H}.  As $\Diophantine$ is dense in $\R/\Z$, we find a sequence
$t_j\in(0,1/2]$ converging to zero such that $\Rot(f_{t_j})\in\Diophantine$.
\end{proof}

Now suppose that $\varphi\in\Diff^\infty(D,\omega_0)$ is any smooth irrational pseudo-rotation.
Let $H\in C^\infty(\R/\Z\times D,\R)$ be a Hamiltonian with time-one map $\varphi$.
Using the lemma one can find smooth perturbations $H_j\in C^\infty(\R/\Z\times D,\R)$ of $H$ near
the boundary $\R/\Z\times\partial D$, so that
\begin{align*}
			    & H_j\rightarrow H \mbox{ in }C^\infty\\
			    &\Rot(\varphi_j|_{\partial D})\in\Diophantine
\end{align*}
for all $j$, where $\varphi_j$ is the time-one map of $H_j$.  For $j$ sufficiently large $\varphi_j$ has
no fixed points besides the origin. (It may not be a pseudo-rotation but that does not matter.) 
By Herman's theorem $\varphi_j|_{\partial D}$
is smoothly conjugate to a rigid rotation.  Therefore, by steps 2 to 4 in section
\ref{S:proof_with_bc} we can find a finite energy
foliation $\F_{H_j}$ of the cylindrical almost complex manifold $(\R\times Z,J_{H_j})$.

The almost complex structures $J_{H_j}$ converge uniformly to $J_H$ as $j\rightarrow\infty$, 
and we may pass to a limit of foliations and obtain a finite energy foliation for $(\R\times Z,J_{H})$.
For example one can take a limit of a single sequence of half cylinder leaves $F_j\in\F_{H_j}$ and obtain a
single $J_{H}$-holomorphic half cylinder $F_\infty$ disjoint from the orbit cylinder $C$ and with vanishing
self intersection number.  The moduli space containing $F_\infty$ fills up the complement of
$C$ by embedded curves that are pairwise identical or disjoint as we require.  This completes
the construction of $\F^{+}$ in theorem
\ref{T:existence_and_uniqueness_of_folns_for_pseudorotations_appendix}.

\subsection{Uniqueness}
That the foliation $\F^{+}$ is uniquely determined by the other properties in theorem 
\ref{T:existence_and_uniqueness_of_folns_for_pseudorotations_appendix}
is more or less immediate from the intersection theory 
because there is only one $1$-periodic orbit and all curves with boundary have the same boundary index.

\subsection{Smoothness}
The proof that $\F^{+}$ is a smooth foliation on $(\R\times Z)\backslash C$ is the same 
as that used in Hofer-Wysocki-Zehnder \cite{HWZ_fef}, based on their implicit function theorem 
in \cite{HWZ_propIII}.   

By smoothness of $\F^+$ at a point $p\in\R\times Z$ we mean that there exists a smooth
local diffeomorphism $\sigma:U\rightarrow V\subset\R^{2}\times\R^{2}$ on a neighborhood
of $p$ onto an open convex subset $V\subset\R^{4}$, such that each leaf $F\in\F^+$ having non-empty
intersection with $U$ is mapped under $\sigma$ onto a subset of the form
$V\cap(\R^{2}\times\{c\})$ for some $c\in\R^{2}$.

The point is that 
each leaf $F\in\F^+$, with $F\neq C$, is embedded, meets the boundary of $\R\times Z$ transversally in
some totally real submanifold, has Fredholm index $2$, and is Fredholm regular.   An implicit
function theorem as generalized in \cite{Wendl_boundary} applied to $F$ gives a neighborhood
of $F$ in $(\R\times Z,J)$ filled by $J$-holomorphic curves, given by exponentiating
sections of a normal bundle over $F$.  (With respect to a metric 
for which each $L_{c}$, and $\R\times\partial Z$, are totally geodesic.)  
These curves from the implicit function theorem form a $C^\infty$-smooth
local foliation $\Ftilde$ of a neighborhood of $F$ in $\R\times Z$.  Intersection theory considerations
show that each leaf in $\Ftilde$ is also a leaf in the global foliation $\F^+$.  So $\F^{+}$ is 
also smooth at points on $F$.

\subsection{Constructing $\F^{-}$}
The construction of $\F^-$ is along exactly the same lines as for $\F^+$, but we start with
a different model foliation in step 1.  Indeed, in place of the curves in (\ref{E:ucz}),
each of which is bounded from below, we use curves of the form
\begin{align}
	v_{c,z}:&\R^-\times\R/\Z\rightarrow\R\times Z\label{E:ucz_2}\\
  	v_{c,z}(s,t)&=(s+c,t,ze^{2\pi(\lceil\alpha\rceil-\alpha)s}e^{2\pi i\lceil\alpha\rceil t})\nonumber
\end{align}
over $c\in\R$ and $z\in\partial D$.  (These maps are also pseudoholomorphic with respect to the
almost complex structure in (\ref{E:acs_on_positive_end}).)  Each $v_{c,z}$ has image bounded from above,
so we have to insert one of them into the negative end of our
almost complex manifold $(W=\R\times Z,\Jhat)$.  So the main difference is that from the start we
reverse the roles of $H_-$ and $H_+$, this time picking a positive constant $C$ in
(\ref{E:constant_in_Hplus}) so that $\min H_->\max H_+$ still holds.  Then the remaining steps
are exactly analogous, and in the final foliation the curves with boundary have boundary index
$\lceil\alpha\rceil$ instead, as the curves in (\ref{E:ucz_2}) do.

\subsection{Foliating the boundary}\label{S:foliating_boundary}
We need to justify the existence of the foliation $\S$ of the boundary of $\R\times Z$
that was used in section \ref{S:proof_with_bc}.

For each $a\in\R$, consider for a fixed value of $a$ the resulting time-dependent Hamiltonian
on the disk $H_a$ given by
\[
			H_{a}^{t}:=H(a,t,\cdot):D\rightarrow\R
\]
over $t\in\R/\Z$.  By modifying $H_a$ on any arbitrarily small neighborhood of the boundary of the disk
we may arrange that the time-one map of the path of generated Hamiltonian disk maps is
any prescribed orientation preserving diffeomorphism on the boundary of the disk.
By extension, given any smooth path $a\mapsto f_a\in\Diff^\infty_+(\partial D)$
over $a\in\R$, satisfying
\begin{equation}\label{E:path_of_boundary_condns}
 	f_a=\left\{\begin{aligned}
	            &R_{2\pi\alpha}|_{\partial D}& &\mbox{if }a\geq1\\
	            &\varphi|_{\partial D}& &\mbox{if }a\leq-1,
	           \end{aligned}\right.
\end{equation}
the function $H:\R\times Z\rightarrow\R$ may be modified on any small neighborhood of $[-1,1]\times\partial Z$
so that for each $a\in\R$ the time-one map of the modified Hamiltonian $H_{a}$ now coincides with
$f_a$ on the boundary of the disk.

Suppose that we can find a smooth path $a\mapsto f_a\in\Diff^\infty_+(\partial D)$ satisfying
(\ref{E:path_of_boundary_condns}), and which additionally has the property that each $f_a$
is smoothly conjugate to the rigid rotation $R_{2\pi\alpha}|_{\partial D}$.  More precisely,
suppose that we find a smooth map $g\in C^\infty(\R\times\partial D,\partial D)$, so that
for each $a\in\R$ the map $g_a:=g(a,\cdot)$ is an element of $\Diff^\infty_+(\partial D)$,
and with the property that the path
$f_a:=g_aR_{2\pi\alpha}g_a^{-1}$
satisfies (\ref{E:path_of_boundary_condns}).  Then we may modify
$H\in C^\infty(\R\times Z,\R)$ near $[-1,1]\times\partial Z$
so that for each $a\in\R$ the time-one map of the time-dependent Hamiltonian $H_{a}:=H(a,\cdot)$
equals $f_a$ on the boundary of the disk.  For each $a\in\R$ let
\begin{align*}
			\phi_a:&\R\times\partial Z\rightarrow\partial Z \\
			      &(t,z)\mapsto\phi_a^t(z)
\end{align*}
denote the $1$-parameter family of maps generated by $X_{H_{a,t}}$ on $\partial D$.
So in particular $\phi_a^1=f_a$ for all $a\in\R$.  Now for each $z\in\partial D$,
\[
	S_z:=\Big\{\big(a,t,g_a(\phi_a^t(z))\big)\in\R\times\R/\Z\times\partial D\,|\,a\in\R,\ t\in\R \Big\}
\]
is an immersed surface in $\R\times\partial Z$, and the union
\[
	      \S:=\bigcup_{z\in\partial D} S_z
\]
is a foliation of $\R\times\partial Z$.  As $\alpha$ is irrational each $S_z$ is dense in $\R\times\partial Z$.
However, the relation
\begin{equation}\label{E:special_form_of_each_fa}
 			    f_a:=g_aR_{2\pi\alpha}g_a^{-1}
\end{equation}
for all $a\in\R$ enables us to find a $C^\infty$-smooth almost complex structure $J'$
on $\R\times Z$, prescribed at points on $\R\times\partial Z$ so that each $S_z$ has $J'$-invariant tangent bundle.
Indeed, differentiating the expression $\big(a,t,g_a(\phi_a^t(z))\big)$ in $a$ gives a vector field $V_1$ say,
while differentiating it in $t$ results in a vector field $V_2$.  Both are non-vanishing and transverse as we will
see, so we can set $J'V_1=V_2$.  That $V_1$ is indeed a well defined vector field uses
(\ref{E:special_form_of_each_fa}).  Moreover, one finds that:
\[
		  V_2=\partial_\tau + X_{H_{a}^{\tau}},
\]
while
\[
		    V_1=\partial_a + V_3,
\]
for some $V_3$ on $\R\times\partial Z$ that has no $\partial_a$ component, and tends to zero
in $C^0$ as $\|\partial_af_a\|_{C^0}$ tends to zero.  We can arrange that $\|\partial_af_a\|_{C^0}$
is as small as we wish by ``slowing everything down'', that is, replacing the interval $[-1,1]\times Z$
by $[-N,N]\times Z$ for sufficiently large $N>0$.  Then, from these expressions for $V_1,V_2$ we see
that $J'$ extends to an almost complex structure $J''$ on $\R\times Z$ with the following properties
if $\|\partial_af_a\|_{C^0}$ is sufficiently small:
\begin{enumerate}
 \item $J''$ coincides with the almost complex structure $J$ outside of a
 small neighborhood of $[-N+1,N-1]\times\partial Z$.
 \item Each surface $L_c:=\{c\}\times\partial Z$ is totally real with respect to $J''$.
 \item $J''$ is tamed by the symplectic form $\Omega$ on $(-N,N)\times Z$.
\end{enumerate}
And finally of course $\S$ is a $J''$-holomorphic filling of the boundary $\R\times\partial Z$.

The only remaining question is when the relation (\ref{E:special_form_of_each_fa}) can be
arranged for all $a\in\R$.  But this holds if and only if the circle
maps $\varphi|_{\partial D}$ and $R_{2\pi\alpha}|_{\partial D}$ are conjugate by
an orientation preserving $C^\infty$-smooth diffeomorphism.  Necessity is obvious, let us
show sufficiency.
Suppose that there exists $g\in\Diff^\infty_+(\partial D)$ such that
$\varphi|_{\partial D}=gR_{2\pi\alpha}|_{\partial D}g^{-1}$.  Since $g$ has degree $+1$
it is smoothly isotopic to the identity and we may find a smooth isotopy $g_a\in\Diff^\infty_+(\partial D)$
over $a\in\R$, satisfying $g_a=\id$ for all $a\geq 1$, and $g_a=g$ for all $a\leq -1$.
Thus the smooth path $f_a\in\Diff^\infty_+(\partial D)$ over $a\in\R$ defined by
$f_a:=g_aR_{2\pi\alpha}g_a^{-1}$ satisfies
\[
	f_a=\left\{\begin{aligned}
	            &R_{2\pi\alpha}& &\mbox{if }a\geq1\\
	            &\varphi|_{\partial D}& &\mbox{if }a\leq-1
	           \end{aligned}\right.
\]
and therefore has the properties we require.

\subsection{Compactness}\label{S:remarks_on_compactness}
In section \ref{S:proof_with_bc} we applied, a number of times, the compactness theory in 
\cite{BEHWZ_SFT_compactness} that was developed for symplectic field theory, to sequences 
of punctured pseudoholomorphic curves in $(W=\R\times Z,\Jhat)$.  

Although $\partial W=\R\times\partial Z$ is non-empty, 
we arranged in the last section that it can be foliated by immersed $\Jhat$-holomorphic curves 
without boundary, in a reasonably nice way (meaning that they lift to properly embedded curves on 
the universal covering of $\partial W$).  

Consider a sequence $F_k$ of $\Jhat$-holomorphic curves in $(W,\Jhat)$ which are half cylinders
with totally real boundary conditions $\partial F_k\subset L_{c_k}$ for some $c_k\in\R$.  Suppose
that for each $k$ the generalized intersection number, in the sense of \cite{S_intersections}, with
the $\Jhat$-holomorphic cylinder $C$ is zero; $F_k\cdot C=0$ for all $k$.  Furthermore, suppose that
each $F_k$ meets the boundary of $W$ transversely.  Then we used the following several times: 

\begin{proposition}
If each $F_k$ has at most one marked point, and the sequence $F_k$ converges to a stable nodal
holomorphic building $\Fbar$ in the sense of \cite{BEHWZ_SFT_compactness}, then $\Fbar$ 
has the following form:
\begin{itemize}
 \item At most one component in each level.  
 \item No nodal points or closed curves.  
 \item One component with boundary, which is a half cylinder (meaning a disk with an interior puncture) 
 with the same sign puncture as that of each $F_{k}$, $k$ large, and meeting $\partial W$ transversally 
 along the boundary of the curve.  
 \item All other components of $\Fbar$, if there are any, are cylinders with punctures of opposite sign.  
 \item Interior points of $\Fbar$ are disjoint from $\partial W$.   
\end{itemize}   
\end{proposition}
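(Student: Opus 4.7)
The plan is to extract the five properties of $\bar F$ from three ingredients: the SFT convergence itself, the Siefring intersection theory of \cite{S_intersections} applied to the homotopy invariants $F_k\cdot F_k$ and $F_k\cdot C$, and the immersed $\hat J$-holomorphic foliation $\S$ of $\partial W$ constructed in Section \ref{S:foliating_boundary}. In the concrete uses made in Section \ref{S:proof_with_bc}, the curves $F_k$ come from the moduli space $\M$ of Step 2, so $F_k\cdot F_k=0$ in addition to $F_k\cdot C=0$; I shall rely on this.

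First I would rule out closed components and nodes. By \cite{S_intersections} the numbers $F_k\cdot F_k$ and $F_k\cdot C$ are homotopy invariants for asymptotically cylindrical punctured curves, so they pass to the stable limit as a sum of nonnegative contributions from the components of $\bar F$, plus a strictly positive contribution from each nodal double point and from each nontrivial closed bubble. Since both totals remain $0$, there can be no nodes, no closed components, and every open component is somewhere injective and distinct from $C$. Combined with the topology of $F_k$ (a disk with one interior puncture), this forces the combinatorial type of $\bar F$ to be a chain: one component carrying the single boundary circle together with the original puncture, possibly extended through further levels by a sequence of cylinders paired at Reeb orbit breakings, with opposite puncture signs across each breaking. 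The non-degenerate elliptic orbit $\gamma$ is the only available breaking orbit, so all matching orbits are forced. Stability, together with the presence of a single marked point and the resulting exclusion of un-decorated trivial cylinders, then yields exactly one component per level.

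To see that interior points of $\bar F$ are disjoint from $\partial W$, I would use $\S$ as a holomorphic barrier. On the universal cover of $\partial W$ the leaves of $\S$ lift to properly embedded $\hat J$-holomorphic planes or half planes. For $k$ large, each $F_k$ has a fixed signed algebraic intersection count with every such lifted leaf; by the transversality hypothesis this count is entirely concentrated on $\partial F_k$, and is determined by the homology class of $\partial F_k\subset L_{c_k}$. Any interior intersection of a limit component of $\bar F$ with $\partial W$ would, by positivity of intersections between distinct pseudoholomorphic curves, produce an additional strictly positive interior contribution to this count, contradicting its constancy in $k$. The same argument guarantees that the unique boundary component of $\bar F$ meets $\partial W$ transversally along a single circle sitting in some level set $L_c$, and that the puncture it carries has the same sign as the puncture of $F_k$.

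The main obstacle is that the SFT compactness framework of \cite{BEHWZ_SFT_compactness} is formulated for closed almost complex manifolds with cylindrical ends, not for manifolds with boundary. The role of $\S$ is exactly to circumvent this: it supplies a codimension-one transverse $\hat J$-invariant foliation of $\partial W$ whose lift to the universal cover consists of properly embedded holomorphic comparison curves, so that standard positivity of intersections substitutes for a direct compactness-with-boundary statement. Once the barrier is in place, the remaining bookkeeping of signs of punctures and asymptotic Reeb orbits becomes routine, because $\gamma$ is the only periodic orbit available in each end.
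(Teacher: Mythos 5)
Your proposal follows the paper's broad template (intersection with $C$ and the self-intersection number, plus the holomorphic foliation $\S$ as a boundary barrier), but the specific way you dispatch the bubbles has two genuine holes that the paper handles by more elementary means.

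First, you assert that each nontrivial closed bubble contributes a strictly positive amount to $F_k\cdot F_k$ or $F_k\cdot C$. That is not true in general: a closed component may be disjoint from $C$ and meet the rest of the building only at a node, so once you have ruled out nodes you have an indirect argument against the bubble but not a direct positivity contribution from it. The clean and immediate argument the paper uses is energy: $\Omega$ is exact on $W=\R\times Z$, so a closed $\Jhat$-holomorphic curve has zero $\omega$-energy and is constant, and then stability kills it. Similarly, your argument never explicitly rules out plane components; this is again immediate (the only Reeb orbit is the noncontractible $\gamma$, so nothing asymptotic to a contractible orbit can appear), but it should be said.

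Second, and more significantly, you do not address compact disk components, i.e.\ disks with boundary in some $L_c$ and no punctures. Your ``chain'' deduction from the vanishing of $F_k\cdot F_k$ and $F_k\cdot C$ presupposes that such pieces have already been excluded. The paper handles this case separately: a nonconstant disk component $D$ has $\partial D\subset L_c$ in a homology class $m\cdot 1_{\partial D}$ with $m\neq0$ (nonzero energy forces $m\neq0$), so $D\cdot C=m\neq0$, contradicting $F_k\cdot C=0$ for large $k$ together with positivity of interior intersections. You should insert an argument of this kind.

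A structural caveat worth keeping in mind: the Siefring intersection theory in \cite{S_intersections} and its homotopy invariance are set up for punctured asymptotically cylindrical curves without boundary, so the invariance across the SFT degeneration is cleanest when you use it only to measure interior intersections with the distinguished boundaryless cylinder $C$. That is exactly how the paper deploys it; the energy argument for closed curves and the homology argument for disks fill in the parts where a direct adjunction appeal for boundary buildings would be on thinner ice. Your use of $\S$ as a positivity barrier to keep interior points off $\partial W$ is correct in spirit and matches the paper's.
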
 
\begin{proof}
$\Fbar$ has genus zero in the generalized sense of \cite{BEHWZ_SFT_compactness} as each $F_{k}$ 
has genus zero.  So each component of $\Fbar$ has genus zero also.  

There are no planes as there are no contractible periodic orbits.  

There are also no disk components in $\Fbar$.  To see this we argue indirectly.  
Suppose that $\Fbar$ has a disk component $D$.  Then $\partial D$
must be a closed loop in one of the totally real surfaces $L_{c}\simeq\partial Z$ say.  As $\partial D$ is
contractible in $W$ it must lie in a homology class $m1_{\partial D}\in H_1(\partial Z)$
for some $m\in\Z$.  The topological count of intersections between $D$ and the cylinder $C$ is $m$.
The building is stable with at most one marked point implies that the component $D$ is not just a point.
Therefore it has non-zero energy, which implies that $m\neq 0$.  Therefore $D$ and $C$ are distinct
holomorphic curves with interior intersections.  Therefore $F_k$ and $C$
intersect for large $k$, contradicting that they are infact disjoint for all $k$.

There are no closed curves in $\Fbar$ as $\Omega$ is exact.  Also there are no nodes, as these
would lead to a component asymptotic to a contractible periodic orbit which is impossible.

It follows that the limit building has one boundary component, and so has a unique component
with non-empty boundary, and that this is a half cylinder.
Due to the orientations of the periodic orbits in the ends this half cylinder cannot have 
a different sign puncture to each $F_{k}$.  

The presence of the foliation $\S$ of $\partial W$ prevents interior intersections between components 
of $\Fbar$ and $\partial W$, and prevents the boundary of $\Fbar$ becoming tangent anywhere to $\partial W$.  
\end{proof}

\appendix

\section{Proof of proposition \ref{P:LeCalvez_observation_intro}}
In this appendix we prove the following statement, which implies proposition 
\ref{P:LeCalvez_observation_intro}.  The idea of the proof was explained to be by Patrice LeCalvez.  

We write $R_{\theta}:D\rightarrow D$ to denote the rigid rotation $z\mapsto e^{i\theta}z$
through angle $\theta\in\R$.  

\begin{proposition}\label{P:LeCalvez_observation}
Consider a sequence $\varphi_k\in\Homeo_+(D)$ converging in the $C^0$-topology to
$\varphi\in\Homeo_{+}(D)$, where all maps fix $0\in D$.
Under the following additional assumptions it follows that $\varphi$ has
no periodic points in $D\backslash\{0\}$.
\begin{enumerate}
 \item For each $k\in\N$, there exists $g_k\in\Homeo_+(D)$, fixing the origin, such that
 $\varphi_k=g_k^{-1}R_{2\pi\theta_k}g_k$, some $\theta_k\in\R$.
 \item $\theta_k\rightarrow\theta$ as $k\rightarrow\infty$, where $\theta$ is
 irrational.
\end{enumerate}
\end{proposition}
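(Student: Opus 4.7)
First, by $C^0$-continuity of the rotation number for orientation-preserving circle homeomorphisms, $\rot(\varphi|_{\partial D})=[\theta]\in\R/\Z$, which is irrational; in particular $\varphi|_{\partial D}$ has no periodic points, so it suffices to rule out periodic points in the open annulus $A:=D\setminus\{0\}$. Suppose for contradiction that $p\in A$ is $\varphi$-periodic of least period $n\geq 1$. The plan is to apply Franks' version of the Poincar\'e--Birkhoff theorem to $\varphi|_A$ and derive a contradiction.

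Two inputs are needed for Franks' theorem: the intersection property for $\varphi|_A$ (every essential simple closed curve in $A$ meets its image), and two rotation numbers in the rotation set of a chosen lift. The intersection property I would obtain by passing to the limit: each $\varphi_k=g_k^{-1}R_{2\pi\theta_k}g_k$ preserves the Borel measure $(g_k^{-1})_*\mu_0$, where $\mu_0$ is Lebesgue on $D$; this measure is finite and positive on open sets, so $\varphi_k|_A$ has the intersection property. A direct compactness argument shows this passes to the $C^0$-limit: if $\gamma\subset A$ is essential and $x_k=\varphi_k(y_k)\in\gamma\cap\varphi_k(\gamma)$ with $y_k\in\gamma$, then after subsequence $(x_k,y_k)\to(x_\infty,y_\infty)$ with $x_\infty=\varphi(y_\infty)\in\gamma\cap\varphi(\gamma)$. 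For the rotation-number input, lift $\varphi_k$ via its canonical isotopy $t\mapsto g_k^{-1}R_{2\pi t\theta_k}g_k$ from the identity to obtain a lift $\tilde\varphi_k$ of $\varphi_k|_A$ for which every orbit has rotation number $\theta_k$. A $C^0$-subsequential limit $\tilde\varphi_k\to\tilde\varphi$ is a lift of $\varphi|_A$ with boundary rotation number $\theta$, while $p$ lifts to $\tilde p$ with $\tilde\varphi^n(\tilde p)=\tilde p+(m,0)$ for some $m\in\Z$, giving $p$ rotation number $m/n\in\Q$. Franks' theorem then produces, for every rational $r/q$ strictly between $m/n$ and $\theta$, a periodic point $y\in A$ of $\varphi$ of period $q$, with a lift $\tilde y$ satisfying $\tilde\varphi^q(\tilde y)-(r,0)=\tilde y$ and (by the Birkhoff-type construction) non-trivial local fixed-point index.

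To extract a contradiction, $C^0$-persistence of the fixed-point index gives that $\tilde\varphi_k^q-(r,0)$ has a fixed point near $\tilde y$ for all $k$ large. But $\tilde\varphi_k=\tilde g_k^{-1}\circ\tilde R_{2\pi\theta_k}\circ\tilde g_k$, and $\tilde g_k$ commutes with integer translations, so a fixed point of $\tilde\varphi_k^q-(r,0)$ at a point $\tilde x$ forces $q\theta_k=r$; this fails once $\theta_k$ is close enough to $\theta\ne r/q$. Hence $m/n=\theta$, contradicting irrationality of $\theta$. The main technical obstacle will be ensuring that the Franks orbit provides an \emph{isolated} fixed point of non-zero local index on which persistence can bite: classical Poincar\'e--Birkhoff produces index-bearing orbits, but rigorously justifying the version applicable to an open annulus with irrational boundary rotation and a fixed point at the inner end, while controlling isolated vs.\ non-isolated fixed points, requires invoking the sharper refinements of Franks' theorem and/or Le Calvez's equivariant Brouwer and transverse foliation machinery.
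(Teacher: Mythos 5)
Your overall plan is on the right track---you invoke Franks' generalized Poincar\'e--Birkhoff theorem and aim for the contradiction that $\theta_k$ would have to be rational---but it has two genuine gaps, both of which the paper sidesteps by running the limit the opposite way.

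You attempt to apply Franks' theorem directly to $\varphi|_A$. That requires every point of $\varphi|_A$ to be non-wandering (this is the hypothesis of \cite{Franks_1}, Theorem 2.1), but non-wandering does \emph{not} in general pass to $C^0$ limits: if $\varphi_k^{m_k}(U)\cap U\neq\emptyset$ with $m_k\to\infty$, nothing forces a return time for $\varphi$. Likewise, the invariant measures $(g_k^{-1})_*\mu_0$ for $\varphi_k$ can degenerate completely in the limit (all mass escaping to $\partial D$, say), so you cannot conclude that $\varphi$ preserves a finite measure positive on open sets. Your compactness argument does give the weaker intersection property for essential curves, but that is not what Franks' theorem as stated requires. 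Note also that the proposition does \emph{not} assume $\varphi$ itself is area-preserving.

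The second gap you flag yourself: extracting a persistent fixed point from Franks requires an \emph{isolated} periodic point of nonzero local index, and the theorem as stated only gives existence. Filling this in with the Le Calvez machinery is possible but heavy, and---more to the point---unnecessary.

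The paper's proof turns both of these problems around. Instead of pushing dynamical hypotheses from the $\varphi_k$ onto the limit $\varphi$, it pushes \emph{topological data} from $\varphi$ onto the approximants. Concretely: from the assumed periodic point of $\varphi$ one constructs, for a suitable iterate $\varphi^n$ and a suitable lift $\varphitilde^n$, one positively returning disk $U_+$ and one negatively returning disk $U_-$ whose \emph{closures} have the disjointness property $\varphitilde^n(\overline{U}_\pm)\cap\overline{U}_\pm=\emptyset$ (the negatively returning disk comes from the interior periodic point together with the fast rotation on $\partial D$; the positively returning disk is built around a non-wandering point on the boundary circle). Returning disks with this closed-disjointness are $C^0$-stable (the paper's remark \ref{R:disks_for_perturbations}), so for $k$ large they serve as returning disks for $\varphitilde_k^n$ as well. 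One then applies Franks' theorem to $\varphi_k^n$---\emph{not} to $\varphi$---for which non-wandering is automatic since $\varphi_k^n$ is conjugate to a rigid rotation. This produces a fixed point of $\varphi_k^n$ in $D\setminus\{0\}$, forcing $n\theta_k\in\Z$, which contradicts $\theta_k\to\theta$ irrational once $k$ is large. No index theory, no non-wandering for $\varphi$, and no ``intermediate rotation number'' refinement of Franks are needed: a single fixed point of $\varphi_k^n$ already gives the contradiction because of the rigid algebraic form of $\varphi_k$.
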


To prove this we need to recall the notions of positively and negatively returning disks due 
to Franks \cite{Franks_1}.  Let $\Ddot:=D\backslash\partial D$.  Denote by 
$A:=\Ddot\backslash\{0\}$ and $\Atilde:=(0,1)\times\R$ the open annulus and its universal covering 
via the covering map
\begin{align*}
				\pi:(0,1)\times\R&\rightarrow\Ddot\backslash\{0\}\\
							(x,y)&\mapsto xe^{2\pi iy}.
\end{align*}
Let $T:\Atilde\rightarrow\Atilde$ be the deck transformation $T(x,y)=(x,y+1)$.
By an open disk $U\subset\Atilde$ is meant an open subset homeomorphic to $\Ddot$ with the
subspace topology.

\begin{definition}
Let $f:\A\rightarrow\A$ be a homeomorphism homotopic to the identity, and
$\ftilde:\Atilde\rightarrow\Atilde$ a lift via $\pi$.
Consider an open disk $U\subset\Atilde$ for which
\begin{align}
						\ftilde(U)\cap U=\emptyset.\label{E:8989}
\intertext{If there exist integers $n>0$, $k\neq0$, such that}
						\ftilde^{n}(U)\cap T^{k}U\neq\emptyset\label{E:8988}
\end{align}
then $U$ is called a \emph{positively}, resp. \emph{negatively}, \emph{returning disk}
for $\ftilde$ if $k>0$, resp. $k<0$.
\end{definition}

\begin{remark}\label{R:disks_for_perturbations}
Consider a disk $U\subset\Atilde$ satisfying (\ref{E:8989}) and (\ref{E:8988}).
If moreover the closure of the disk satisfies (\ref{E:8989}), that is,
$\ftilde(\Ubar)\cap\Ubar=\emptyset$, then for any sufficiently $C^0$-small perturbation of $\ftilde$,
$U$ will satisfy (\ref{E:8989}) and (\ref{E:8988}) for the perturbed map also.
\end{remark}

The key result for us is the following strong generalization of the Poincar\'e-Birkhoff
fixed point theorem, theorem 2.1 in \cite{Franks_1}.

\begin{theorem}[Franks]\label{T:Franks_gen_of_PB}
Let $f:\A\rightarrow\A$ be a homeomorphism of the open annulus homotopic to the identity,
and for which every point is non-wandering.  If there exists a lift
$\ftilde:\Atilde\rightarrow\Atilde$  having a positively returning disk which is a lift
of a disk in $\A$, and a negatively returning disk that is a lift of a disk in $\A$, then $f$
has a fixed point.
\end{theorem}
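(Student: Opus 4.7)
My plan is to argue by contradiction, combining the $C^0$-stability of returning disks (Remark~\ref{R:disks_for_perturbations}) with the observation that each approximating lift $\tilde\varphi_k$, being conjugate to a pure vertical translation on the universal cover of the open annulus, admits only a very restricted class of returning disks.

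First I would reduce to the case where $\varphi$ has a fixed point $z_0$ lying in the open annulus $A:=\Ddot\setminus\{0\}$. If $\varphi$ has a periodic point $z_0\in D\setminus\{0\}$ of prime period $n$, then replacing $(\varphi,\theta,\varphi_k,\theta_k)$ by $(\varphi^n, n\theta, g_k^{-1}R_{2\pi n\theta_k}g_k, n\theta_k)$ preserves the hypotheses (since $n\theta$ is again irrational) and reduces to $n=1$. The rotation number being $C^0$-continuous on $\Homeo_+(\partial D)$, the boundary restriction $\varphi|_{\partial D}$ has irrational rotation number $\theta$ and hence no periodic points; thus $z_0\in A$.

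Next I would pass to the universal cover $\Atilde=(0,1)\times\R$ with deck transformation $T(x,y)=(x,y+1)$, and fix compatible lifts $\tilde\varphi_k=\tilde g_k^{-1}\tilde R_{\theta_k}\tilde g_k$, where $\tilde R_{\theta_k}(x,y)=(x,y+\theta_k)$, $\theta_k\in(0,1)$ after mod-$\Z$ reduction, and $\tilde g_k$ is a lift of $g_k$; let $\tilde\varphi=\lim\tilde\varphi_k$ be the compatible limit lift in $C^0_{\loc}(\Atilde)$. A lift $\tilde z_0\in\Atilde$ of $z_0$ satisfies $\tilde\varphi(\tilde z_0)=T^m(\tilde z_0)$ for some $m\in\Z$. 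If $m\neq 0$, then a sufficiently small open disk $\tilde U\ni\tilde z_0$ satisfies $\tilde\varphi(\overline{\tilde U})\cap\overline{\tilde U}=\emptyset$ and $\tilde\varphi(\tilde U)\cap T^m(\tilde U)\neq\emptyset$, so $\tilde U$ is a returning disk for $\tilde\varphi$ with $n_0=1$, $p=m$. By Remark~\ref{R:disks_for_perturbations} the same $\tilde U$ is returning for $\tilde\varphi_k$ with identical parameters for all $k$ large; applying $\tilde g_k$, the set $\tilde g_k(\tilde U)$ becomes a returning disk of parameters $(1,m)$ for the pure translation $\tilde R_{\theta_k}$. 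Writing $L$ for its $y$-extent, the disjointness forces $L<\theta_k$ and the return forces $|m-\theta_k|<L$, hence $|m-\theta_k|<\theta_k$; for integer $m$ this forces $m=1$ and $\theta_k>1/2$. The only surviving possibilities are therefore $m\in\{0,1\}$.

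The remaining cases are eliminated by iteration. For suitable $N\in\N$, set $\psi=\varphi^N$, $\psi_k=g_k^{-1}R_{2\pi N\theta_k}g_k$, and $q=\lfloor N\theta\rfloor=\lfloor N\theta_k\rfloor$ (the latter holding for $k$ large since $N\theta\notin\Z$). The lifts $\tilde\psi_k:=\tilde\varphi_k^NT^{-q}=\tilde g_k^{-1}\tilde R_{\{N\theta_k\}}\tilde g_k$ converge in $C^0_{\loc}$ to $\tilde\psi:=\tilde\varphi^NT^{-q}$, and a direct computation using $T$-equivariance of $\tilde\varphi$ yields $\tilde\psi(\tilde z_0)=T^{m_N}(\tilde z_0)$ with $m_N=Nm-q$. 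Picking $N\geq\lceil 1/\theta\rceil$ in the case $m=0$ gives $m_N\leq -1$; picking $N>1/(1-\theta)$ in the case $m=1$ gives $m_N\geq 2$. In both cases $m_N\notin\{0,1\}$, so the preceding paragraph's analysis applied to $(\psi,\psi_k,\{N\theta\})$ in place of $(\varphi,\varphi_k,\theta)$ yields the contradiction. The main obstacle of this plan is precisely the case $m=0$, where the lifted fixed point contributes no returning disk of its own: the iteration step is the device that separates the rotation number at $\tilde z_0$ from that on the boundary, thereby manufacturing the missing negatively returning disk.
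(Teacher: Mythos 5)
The statement you were asked to prove is Theorem \ref{T:Franks_gen_of_PB}: a single homeomorphism $f$ of the open annulus, homotopic to the identity, with every point non-wandering, whose lift $\ftilde$ admits both a positively and a negatively returning disk (each a lift of a disk in $\A$), has a fixed point. Your proposal never engages with these hypotheses. There is no sequence $\varphi_k=g_k^{-1}R_{2\pi\theta_k}g_k$, no conjugating homeomorphisms $g_k$, and no rotation numbers $\theta_k$ anywhere in the statement; those objects belong to Proposition \ref{P:LeCalvez_observation}, and what you have written is an outline of a proof of \emph{that} proposition --- one which, moreover, tries to bypass Franks' theorem rather than establish it. The theorem itself is quoted from Franks \cite{Franks_1} (Theorem 2.1 there); the paper supplies no proof of it, and the actual proof is a substantial argument resting on Brouwer plane-translation and chain-recurrence techniques, none of which appears in your text. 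So the proposal does not address the statement at all.

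Even judged as a proof of Proposition \ref{P:LeCalvez_observation}, the argument has a genuine gap at its central step. You claim that if $V=\tilde g_k(\tilde U)$ is a returning disk for the pure translation $\tilde R_{\theta_k}(x,y)=(x,y+\theta_k)$ with parameters $(1,m)$, then, writing $L$ for its $y$-extent, ``the disjointness forces $L<\theta_k$.'' This is false: a connected open set can be disjoint from its vertical translate by $\theta_k$ while its $y$-projection has arbitrarily large diameter (a thin, steeply sloped strip), so you cannot conclude $|m-\theta_k|<\theta_k$, and the reduction to $m\in\{0,1\}$ collapses. Note also that the definition of a returning disk only requires $\ftilde^{\,n}(U)\cap T^{k}U\neq\emptyset$ for \emph{some} $n>0$, whereas your analysis of the conjugated translation fixes $n=1$. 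The reason the paper routes the argument through Franks' theorem is precisely that the recurrence hypothesis (every point non-wandering, automatic here because the maps preserve a measure positive on open sets) is what converts the coexistence of returning disks of both signs into a fixed point; a direct geometric analysis of returning disks for a rigid translation does not substitute for it.
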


Recall that a point $x\in\A$ is \emph{non-wandering for $f$} if for every open neighborhood $U$ of
$x$ there exists $n>0$ such that $f^{n}(U)\cap U\neq\emptyset$.  It is easy to prove that
every point is non-wandering for a homeomorphism $f$ if $f$ preserves a finite measure that is
positive on open sets.

We now use theorem \ref{T:Franks_gen_of_PB} to prove the proposition.

\begin{proof}[Proof of proposition \ref{P:LeCalvez_observation}]
Arguing indirectly, suppose that $\varphi$ has a periodic point $z_0\in D\backslash\{0\}$.
Then we will show that some iterate of $\varphi$, let us call it $\varphi^{n}$, has a lift
via the covering map
\begin{align*}
		\pi:(0,1]\times\R\rightarrow D\backslash\{0\}\\
		      (x,y)\mapsto xe^{2\pi iy}
\end{align*}
to a map of the half closed infinite strip
$\varphitilde^n:(0,1]\times\R\rightarrow(0,1]\times\R$, having disks $U_-,U_+\subset(0,1)\times\R$
which satisfy the following:
\begin{enumerate}
 \item $U_+$ is a positively returning disk for $\varphitilde^n$.  $U_-$ is a negatively returning disk
 for $\varphitilde^n$.
 \item The closures satisfy $\ftilde(\Ubar_+)\cap\Ubar_+=\emptyset$ and
 $\ftilde(\Ubar_-)\cap\Ubar_-=\emptyset$.
 \item $U_+$ and $U_-$ are lifts of disks in $\Ddot\backslash\{0\}$.
\end{enumerate}

Suppose for a moment that we have established this.  Then
since $\varphi_k:D\rightarrow D$ converges uniformly in the $C^0$-topology
to $\varphi$, there exists a sequence of lifts
\begin{align*}
		\varphitilde_k:(0,1]\times\R\rightarrow(0,1]\times\R
\end{align*}
of $\varphi_k$ which converges uniformly in the $C^0$-topology to $\varphitilde$.  Hence by
remark \ref{R:disks_for_perturbations} there exists
$K\in\N$ such that for all $k\geq K$, $U_+$ is a positively returning disk for $\varphitilde_k^{n}$,
and $U_-$ is a negatively returning disk for $\varphitilde_k^{n}$.

Recall that $U_+$ and $U_-$ lie in the interior $(0,1)\times\R$, and both are lifts of disks in
$\Ddot\backslash\{0\}$.  Therefore we may apply Franks' theorem, theorem \ref{T:Franks_gen_of_PB},
to each map of the open annulus $\varphi_k^{n}=(\varphi_k)^n:\Ddot\backslash\{0\}\rightarrow\Ddot\backslash\{0\}$
for $k\geq K$.  Indeed, being conjugate to a rigid rotation, every point is non-wandering for
$\varphi_k^{n}$.  We conclude that for all $k\geq K$,
$\varphi_k^{n}$ has a fixed point in $D\backslash\{0\}$.  But $\varphi_k^{n}$ is conjugate to the
rigid rotation $R_{2\pi n\theta_k}$.  So $n\theta_k\in\Z$ for all $k\geq K$, which contradicts the
convergence of the sequence $\theta_k$ to an irrational number.

With this contradiction we will be done, and so it remains to establish that some iterate of
$\varphi$ has a lift for which we can find disks $U_+,U_-\subset(0,1)\times\R$ satisfying conditions
(1), (2), and (3) listed above.

We are assuming that $\varphi$ has a periodic point besides $0\in D$.
From this it is not hard to see that for some $n>0$ sufficiently large there exists a
lift $\varphitilde^n:(0,1]\times\R\rightarrow(0,1]\times\R$ of $\varphi^n$ such that one of
the following two possibilities occurs: (1) there is a fixed point $z\in\Ddot\backslash\{0\}$
of $\varphi^n$ with a lift $(x,y)\in(0,1)\times\R$, such that $\varphitilde^n(x,y)=(x,y-1)$,
and for all $y\in\R$, $\varphitilde^n(1,y)=(1,y')$ where
$y'>y+1$.  Or (2), there is a fixed point $z\in\Ddot\backslash\{0\}$
of $\varphi^n$ with a lift $(x,y)\in(0,1)\times\R$, such that $\varphitilde^n(x,y)=(x,y+1)$,
and for all $y\in\R$, $\varphitilde^n(1,y)=(1,y')$ where
$y'<y-1$.

Let us consider the first situation, as the second is dealt with similarly.
Any sufficiently small disk neighborhood $U_-\subset(0,1)\times\R$ of $(x,y)$, satisfies
$\varphitilde^n(\Ubar_-)\cap \Ubar_-=\emptyset$ and is a lift of a disk in $\Ddot\backslash\{0\}$.
Moreover $\varphitilde^n(U_-)\cap T^{-1}(U_-)$ is non-empty as it contains the point $(x,y-1)$.
Thus $U_-$ is a negatively returning disk for $\varphitilde^n$ satisfying conditions (1), (2),
and (3) above.

It remains to find a suitable positively returning disk for $\varphitilde^n$.  Recall that
for all $y\in\R$,
\[
			      \varphitilde^n(1,y)=(1,y')
\]
where $y'>y+1$.  As $\partial D$ is a compact invariant set for $\varphi^n$, there exists a point
$z_1\in\partial D$ that is non-wandering for $\varphi^n$ (e.g. take any point in the $\omega$-limit
set of an orbit).  Let $(1,y)\in(0,1]\times\R$ be a lift of $z_1$.  Then for every sufficiently small
closed neighborhood $I\subset\partial\Atilde:=\{1\}\times\R$ of $(1,y)$ there exists $m>0$ and $k>0$ such
that
\begin{align}
			  \varphitilde^n(I)\cap I=&\emptyset\label{E:477} \\
		 \varphitilde^{nm}(I)\cap T^k(I)&\neq\emptyset\label{E:476}.
\end{align}
Taking $I$ sufficiently small we may also assume that it is a lift of an interval in $\partial D$,
that is, a closed, connected, non-empty, simply connected set.

Consider the open neighborhoods of $I$ in $\Atilde$ of the form
\[
			  V:=\{z\in(0,1]\times\R\,|\, d(z,I)<\varepsilon\}
\]
for $\varepsilon>0$.  Clearly,
\begin{equation}\label{E:5844}
					\varphitilde^{nm}(V)\cap T^k(V)\neq\emptyset
\end{equation}
from (\ref{E:476}) and is open in $(0,1]\times\R$.  And for all $\varepsilon>0$ sufficiently small
we have
\begin{equation}\label{E:5845}
			\varphitilde^n(\Vbar)\cap\Vbar=\emptyset
\end{equation}
from (\ref{E:477}).  Set $U_{+}:=V\cap\Atilde=V\cap\big((0,1)\times\R\big)$.  From (\ref{E:5844})
we get
\[
					\varphitilde^{nm}(U_{+})\cap T^k(U_{+})\neq\emptyset.
\]
From (\ref{E:5845})
\[
					\varphitilde^n(\Ubar_{+})\cap\Ubar_{+}=\emptyset
\]
as $\Ubar_+=\Vbar$.
Moreover, for $\varepsilon>0$ sufficiently small $U_{+}$ is a disk in $\Atilde$ and is a
lift of a disk in $\Ddot\backslash\{0\}$.  Thus $U_{+}$ is a positively returning disk for
$\varphitilde^{n}$ satisfying the required conditions (1), (2), and (3).
\end{proof}

\section{Nondegeneracy}

Let $\varphi:D\rightarrow D$ be an irrational pseudo-rotation with rotation number $[\alpha]\in\R/\Z$.  
In this appendix we prove the following.

\begin{lemma}\label{L:fixed_point_of_pseudorotation_is_strongly_elliptic}
 The linearization $D\varphi(0)$ has eigenvalues $\{e^{2\pi i\alpha},e^{-2\pi i\alpha}\}$.
\end{lemma}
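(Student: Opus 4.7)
The proof will combine three ingredients: the algebraic structure of $A := D\varphi(0)$ coming from area-preservation, a notion of linear rotation number at the fixed point, and the Franks-type Poincar\'e-Birkhoff argument used in Appendix A to match this local rotation number with the boundary rotation number.

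First, since $\varphi \in \Diff^\infty(D, \omega_0)$, the matrix $A$ lies in $SL(2,\R)$, so its eigenvalues form either a complex conjugate pair $\{e^{\pm 2\pi i\beta}\}$ on the unit circle (elliptic case), a real reciprocal pair distinct from $\pm 1$ (hyperbolic), or a double eigenvalue $\pm 1$ (parabolic). The nondegeneracy remark recalled at the end of Section \ref{S:pseudo-rotations} says that $A^n$ has no eigenvalue $1$ for any $n \in \N$; in particular $\lambda$ is not a root of unity and the parabolic case is immediately excluded. Thus $A$ is either hyperbolic, or elliptic with $\beta \in \R \setminus \Q$ well-defined modulo $\Z$ up to the sign ambiguity $\beta \leftrightarrow -\beta$, which does not affect the eigenvalue set.

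Second, I pick a Hamiltonian $H$ generating $\varphi$ and set $\alpha := \Rot(\varphi; H) \in \R$, a real lift of the boundary rotation number. Pass to the real blow-up of $0 \in D$, i.e., to $\Atilde = [0,1] \times \R$ with projection $(r,\theta) \mapsto r e^{2\pi i\theta}$. The Hamiltonian $H$ singles out a canonical lift $\tilde\varphi : \Atilde \to \Atilde$ of the blow-up of $\varphi|_{D \setminus \{0\}}$ whose translation number on the outer boundary $\{1\} \times \R$ equals $\alpha$. From the expansion $\varphi(z) = Az + o(|z|)$ near $0$, the translation numbers of $\tilde\varphi$ on the circles $\{r\} \times \R$ converge, as $r \to 0$, to a real number $\beta + k$ with $k \in \Z$ determined by the lift, where in the hyperbolic case one sets $\beta = 0$ (or $\beta = 1/2$ if the hyperbolic eigenvalues are negative).

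The third step is the Franks-type argument: if $\alpha \neq \beta + k$ in $\R$, then any rational $p/q$ strictly between them gives, via positively and negatively returning disks for $\tilde\varphi^q$ shifted by $p$ exactly as in the proof of Appendix A, a periodic point of $\varphi$ inside $D \setminus \{0\}$; here one uses that $\varphi$ preserves finite Lebesgue area so every point of $D \setminus \{0\}$ is non-wandering. By hypothesis no such periodic points exist, forcing $\alpha = \beta + k$, i.e., $\beta \equiv \alpha \pmod \Z$. Irrationality of $\alpha$ now rules out the hyperbolic and parabolic cases, and the eigenvalue set $\{e^{\pm 2\pi i\beta}\}$, being invariant under $\beta \mapsto -\beta$, equals $\{e^{\pm 2\pi i\alpha}\}$ as claimed. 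The main technical obstacle is the translation-number convergence invoked in the second step, which amounts to showing $\tilde\varphi(r,\theta) - (r, \theta + \beta + k) \to 0$ uniformly in $\theta$ as $r \to 0$; this follows from $C^1$-closeness of $\varphi$ to its linearization near $0$ together with continuity of the rotation number in the $C^0$-topology on the space of orientation-preserving circle homeomorphisms.
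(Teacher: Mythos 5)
Your high-level strategy is the one the paper uses: blow up the fixed point to an area-preserving homeomorphism of the closed annulus, observe its restriction to the inner boundary circle is the projectivization of $D\varphi(0)$, force the two boundary rotation numbers to agree, and then read off the eigenvalue set from the $SL(2,\R)$ classification plus the sign-ambiguity argument. Two places deserve flags.

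First, the mechanism by which you equate the two boundary rotation numbers differs from the paper's. The paper invokes the classical Poincar\'e--Birkhoff theorem for area-preserving annulus homeomorphisms directly (no interior periodic points $\Rightarrow$ equal boundary rotation numbers), whereas you route the same conclusion through Franks' theorem with returning disks. That route does work, but your claim that the returning disks arise ``exactly as in the proof of Appendix A'' is not accurate and hides a real adaptation: in that proof the \emph{negatively} returning disk $U_-$ is built around an \emph{interior periodic point}, which is precisely what is unavailable here, while only the \emph{positively} returning disk $U_+$ is built from boundary recurrence. What you actually need is to run the boundary-recurrence construction (the $U_+$ construction from Appendix A) on \emph{both} boundary circles of the compactified annulus after replacing $\varphitilde$ by $\varphitilde^q T^{-p}$ for some $p/q$ strictly between the two boundary rotation numbers; then the different signs of $q\beta - p$ and $q\alpha - p$ give one positively and one negatively returning disk. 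That gap is filled by a standard argument, but as written your proof is not self-contained on this point.

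Second, a minor circularity: your first paragraph cites the nondegeneracy statement from the end of Section 2, but in the paper that statement is \emph{derived from} the lemma you are proving (the remark there explicitly says ``See appendix B''). Fortunately, this use is inessential — once $\beta \equiv \pm\alpha \pmod \Z$ with $\alpha$ irrational is established, the hyperbolic and parabolic cases are excluded anyway — but you should not cite it as an ingredient. Also, the phrase ``translation numbers of $\tilde\varphi$ on the circles $\{r\}\times\R$'' is imprecise since those circles are not invariant; the object you really need is the continuous extension of the blow-up to the inner boundary (the paper's Lemma \ref{L:appendix_blowing_up_fixedpt}) together with continuity of the rotation number in $C^0$, and your final sentence correctly identifies that this is the technical content required.
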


We begin with:

\begin{lemma}\label{L:appendix_blowing_up_fixedpt}
Let $\pi:(0,1]\times\partial D\rightarrow D\backslash\{0\}$ be the $C^\infty$-diffeomorphism
$\pi(t,x):=\sqrt{t}x$.  Then the diffeomorphism
$\pi^{-1}\circ\varphi\circ\pi$ on the half open annulus $(0,1]\times\partial D$ preserves
the area form $dt\wedge d\theta$ and has an extension to a
homeomorphism on the closed annulus $\varphihat:[0,1]\times\partial D\rightarrow[0,1]\times\partial D$
given by
\begin{align*}
	      \varphihat(0,x)=\left(0\,,\frac{D\varphi(0)[x]}{|D\varphi(0)[x]|}\right).
\end{align*}
\end{lemma}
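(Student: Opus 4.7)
The statement has two parts: area preservation of $\pi^{-1}\circ\varphi\circ\pi$ on the open half-annulus $(0,1]\times\partial D$, and continuous extension to a homeomorphism of $[0,1]\times\partial D$ whose action on $\{0\}\times\partial D$ is the projectivized linearization $x\mapsto D\varphi(0)x/|D\varphi(0)x|$. For area preservation I would first record the coordinate identity $\pi^{*}(dx\wedge dy)=\tfrac{1}{2}\,dt\wedge d\theta$, which is a one-line Jacobian computation using $\pi(t,\theta)=\sqrt{t}(\cos\theta,\sin\theta)$. Since $\varphi^{*}(dx\wedge dy)=dx\wedge dy$, pushing this identity through $\pi$ gives $(\pi^{-1}\varphi\pi)^{*}(dt\wedge d\theta)=dt\wedge d\theta$ on $(0,1]\times\partial D$. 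Note that on this open half annulus $\pi^{-1}\varphi\pi$ is already a diffeomorphism because $\pi$ is a diffeomorphism away from the origin and $\varphi$ fixes $0$ and hence preserves $D\setminus\{0\}$.

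For the extension to $t=0$, write $A:=D\varphi(0)$. Area preservation forces $A\in SL(2,\R)$, so $A$ is invertible and there is a uniform lower bound $|Ax|\ge c>0$ for $x\in\partial D$. From $\varphi(z)=Az+O(|z|^{2})$ near $0$, substitute $z=\sqrt{t}\,x$ to get $\varphi(\sqrt{t}\,x)=\sqrt{t}\,Ax+O(t)$. Writing this vector in the form $\sqrt{s(t,x)}\,y(t,x)$ with $s\ge 0$ and $y\in\partial D$, I would check that
\[
s(t,x)=t|Ax|^{2}+O(t^{3/2}),\qquad y(t,x)=\frac{Ax}{|Ax|}+O(\sqrt{t}),
\]
uniformly in $x\in\partial D$. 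Applying $\pi^{-1}$ then gives $(\pi^{-1}\varphi\pi)(t,x)=(s(t,x),y(t,x))$, which converges to $(0,Ax/|Ax|)$ as $t\to 0^{+}$, uniformly in $x$. This is exactly the joint continuity required for $\varphihat$ at points of $\{0\}\times\partial D$, so the extension by the stated formula is continuous on $[0,1]\times\partial D$.

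To promote continuity to the homeomorphism property I would apply the same Taylor argument to $\varphi^{-1}$ (which is smooth, area-preserving, fixes $0$, and has $D\varphi^{-1}(0)=A^{-1}$). This produces a continuous extension $\widehat{\varphi^{-1}}$ of $\pi^{-1}\varphi^{-1}\pi$ whose restriction to $\{0\}\times\partial D$ is $x\mapsto A^{-1}x/|A^{-1}x|$, inverse to $\varphihat|_{\{0\}\times\partial D}$. Compactness of the closed annulus then upgrades $\varphihat$ from continuous bijection to homeomorphism. The main (and really only) delicate point is the uniform $O(\sqrt{t})$ control on the angular component $y(t,x)$ near $t=0$, and this is precisely where the invertibility of $A$ (a consequence of area preservation) is essential: without a positive lower bound on $|Ax|$, the normalization $Ax/|Ax|$ would blow up and the extension would fail. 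Everything else is elementary bookkeeping.
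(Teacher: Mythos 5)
Your proof is correct, and it diverges from the paper's in one substantive way worth noting. For the extension to $t=0$ you use the Taylor expansion $\varphi(z)=D\varphi(0)z+O(|z|^2)$ and track error terms; the paper instead uses the Hadamard--type factorization $\varphi(z)=\Phi(z)[z]$ where $\Phi(z)=\int_0^1 D\varphi(sz)\,ds$ is a smooth $\mathrm{gl}(\R^2)$-valued function with $\Phi(0)=D\varphi(0)$. Writing
\[
\frac{\varphi(\sqrt{t}\,x)}{|\varphi(\sqrt{t}\,x)|}=\frac{\Phi(\sqrt{t}\,x)[x]}{|\Phi(\sqrt{t}\,x)[x]|}
\]
makes the right-hand side manifestly a continuous function of $(t,x)\in[0,1]\times\partial D$ once one checks that the denominator never vanishes (at $t>0$ because $\varphi$ is injective and fixes only $0$; at $t=0$ because $D\varphi(0)$ is nonsingular), so the extension and its continuity are read off without any $O(\cdot)$ bookkeeping or explicit uniformity arguments. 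Your Taylor route is fine provided you really do justify the uniformity in $x$; the paper's approach buys that for free. Two small cleanups: invertibility of $D\varphi(0)$ follows already from $\varphi$ being a diffeomorphism and does not require area preservation, so the parenthetical attribution is slightly off; and for the homeomorphism claim you do not need the separate argument for $\varphi^{-1}$ --- a continuous bijection from a compact space to a Hausdorff space is automatically a homeomorphism, and the bijectivity of $\varphihat$ is immediate since its restrictions to $\{t>0\}$ and $\{t=0\}$ are each bijections onto disjoint images.
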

\begin{proof}
First, $\pi^*(dx\wedge dy)=\pi^*(rdr\wedge d\theta)=\sqrt{t}d(\sqrt{t})\wedge d(\theta)=(1/2)dt\wedge d\theta$.
So $\varphihat$ preserves the area form $dt\wedge d\theta$.

It remains to justify that the extension is continuous.
Define $\Phi\in C^\infty(D,\textup{gl}(\R^2))$ by
\begin{align*}
		    \Phi(z)[v]=\int_0^1 D\varphi(tz)[v]dt
\end{align*}
for all $z\in D$ and $v\in\R^2$.  As $\varphi(0)=0$, by the fundamental theorem of calculus
\begin{equation}\label{E:varphi_interms_of_Phi}
		\varphi(z)=\Phi(z)[z]
\end{equation}
for all $z\in D$.  Hence the continuous map $[0,1]\times\partial D\rightarrow\R^2$
given by
\[
		  (t,x)\mapsto \Phi(\sqrt{t}x)[x]
\]
is knowhere vanishing; on $t>0$ because of relation (\ref{E:varphi_interms_of_Phi}) and
that $\varphi$ maps only the origin to the origin; at $t=0$ because $\Phi(0)=D\varphi(0)$ has
no kernel.

A calculation shows that for all $t>0$ and $x\in\partial D$,
\begin{equation}\label{E:634}
	      \varphihat(t,x)=
	      \left(|\varphi(\sqrt{t}x)|^2,
			\frac{\varphi(\sqrt{t}x)}{|\varphi(\sqrt{t}x)|}\right).
\end{equation}
Using (\ref{E:varphi_interms_of_Phi}) we have, for $t>0$ and $x\in\partial D$,
\begin{align}\label{E:635}
  \frac{\varphi(\sqrt{t}x)}{|\varphi(\sqrt{t}x)|}=
		  \frac{\Phi(\sqrt{t}x)[x]}{|\Phi(\sqrt{t}x)[x]|}.
\end{align}
As we observed, the denominator in the right hand side is well defined, continuous, and
knowhere vanishing, even at $t=0$.  Thus the right hand side of (\ref{E:635}) defines a continuous extension
of the left hand side, and at $t=0$ takes the value
\[
		      \frac{D\varphi(0)[x]}{|D\varphi(0)[x]|}.
\]
Thus the right hand side of (\ref{E:634}) extends continuously to $t=0$,
and the extended map satisfies
\[
	  \varphihat(0,x)=\left(0\, , \frac{D\varphi(0)[x]}{|D\varphi(0)[x]|}\right).
\]
This proves lemma \ref{L:appendix_blowing_up_fixedpt}.
\end{proof}

A real linear map $T:\R^2\rightarrow\R^2$ defines a circle map by
\begin{equation}\label{E:9841}
 \begin{aligned}
	      f_{T}:& \partial D\rightarrow\partial D\\
 		&\ v\mapsto \frac{Tv}{|Tv|}.
 \end{aligned}
\end{equation}
Let us record some elementary properties of this correspondence:
\begin{enumerate}
 \item If $T$ is an orientation preserving isomorphism then $f_{T}$ is an orientation preserving homeomorphism.
 \item If $T_1$ and $T_2$ are two linear maps then $f_{T_1\circ T_2}=f_{T_1}\circ f_{T_2}$, and moreover
$f_{\id}=\id_{S^1}$.
 \item $T$ has a positive real eigenvalue if and only if $f_T$ has a fixed point if and only if
$\rot(f_T)=0$.  Therefore $T$ has a negative real eigenvalue implies $\rot(f_T)\in\{0,1/2\}$.
\end{enumerate}

\begin{proof}[Proof of lemma \ref{L:fixed_point_of_pseudorotation_is_strongly_elliptic}]
By lemma \ref{L:appendix_blowing_up_fixedpt} we may conjugate the map
$\varphi:D\backslash\{0\}\rightarrow D\backslash\{0\}$ via a smooth orientation preserving
change of coordinates
$\pi:(0,1]\times\partial D\rightarrow D\backslash\{0\}$ to obtain an area preserving map
on the half open annulus that extends to a homeomorphism on the closed annulus
\[
	      \varphihat:[0,1]\times\partial D\rightarrow[0,1]\times\partial D.
\]
Moreover, on the ``new'' boundary $\{0\}\times\partial D$ the extended map is the circle map
induced by the differential of $\varphi$ at the removed fixed point.  That is,
\[
	\varphihat(0,x)=\left(0\,,\frac{D\varphi(0)[x]}{|D\varphi(0)[x]|}\right)
\]
for all $x\in\partial D$.  The annulus map $\varphihat$ preserves Lebesgue measure and is homotopic
to the identity and so the Poincar\'e-Birkhoff fixed point theorem applies.
That is, since $\varphi$ has no periodic points on $D\backslash\{0\}$ the map $\varphihat$ has no
periodic points in $(0,1]\times\partial D$, so the restriction of
$\varphihat$ to its two boundary components give circle homeomorphisms with equal rotation numbers.
In terms of $\varphi$ this is
\[
		  \rot(f_{D\varphi(0)})=\rot(\varphi|_{\partial D})
\]
where $f_{D\varphi(0)}$ is the circle map determined by the linear map
$T=D\varphi(0):\R^2\rightarrow\R^2$ as described in (\ref{E:9841}) above.
As $\varphi|_{\partial D}$ has no periodic points, its rotation number
\[
		  [\alpha]:=\rot(f_{D\varphi(0)})\in\R/\Z
\]
is irrational.  Therefore, $\rot(f_{D\varphi(0)})$ is irrational and so $D\varphi(0)$ cannot have
any real eigenvalues from the discussion immediately following (\ref{E:9841}) above.
Hence, being symplectic, the eigenvalues of $D\varphi(0)$ must be of the form
\begin{equation}\label{E:eigenvalues_at_origin}
 \sigma(D\varphi(0))=\{e^{2\pi i\theta}, e^{-2\pi i\theta}\}
\end{equation}
for some $\theta\notin(\Z\cup1/2\Z)$.  From the real canonical form,
$D\varphi(0)$ is therefore similar to the rotation through $2\pi\theta$ and also to the
rotation through $-2\pi\theta$.  Since these two rotations are themselves conjugate to each
other via an orientation reversing linear map, we conclude that
there always exists an \emph{orientation preserving} real linear map
$P:\R^2\rightarrow\R^2$ such that
\[
	P^{-1}D\varphi(0)P=R_{2\pi\theta}\qquad\mbox{or}\qquad P^{-1}D\varphi(0)P=R_{-2\pi\theta}.
\]
Therefore, $[\alpha]=\rot(f_{D\varphi(0)})=\rot(f_{R_{\pm2\pi\theta}})=\pm[\theta]$.  Meaning that
$[\alpha]=[\theta]$ or $[\alpha]=-[\theta]$.
So by (\ref{E:eigenvalues_at_origin}) the eigenvalues of $D\varphi(0)$ are
$\{e^{2\pi i\alpha}, e^{-2\pi i\alpha}\}$.
\end{proof}

\end{document}